\definecolor{darkred}{rgb}{.6,0,0}
\definecolor{darkgreen}{rgb}{0,.5,0}
\numberwithin{equation}{section}
\newtheorem{thm}[equation]{Theorem}
\newtheorem{lemma}[equation]{Lemma}
\newtheorem{cor}[equation]{Corollary}
\newtheorem{prop}[equation]{Proposition}
\theoremstyle{definition}
\newtheorem{ex}[equation]{Example}
\newtheorem{defn}[equation]{Definition}
\theoremstyle{remark}
\newtheorem{remark}[equation]{Remark}
\newtheorem{construction}[equation]{Construction}
\Crefname{thm}{Theorem}{Theorems}
\Crefname{prop}{Proposition}{Propositions}
\Crefname{ex}{Example}{Examples}
\Crefname{cor}{Corollary}{Corollaries}
\DeclareMathOperator{\coker}{coker}
\DeclareMathOperator{\im}{im}
\DeclareMathOperator{\id}{id}
\DeclareMathOperator{\Hom}{Hom}
\DeclareMathOperator{\End}{End}
\DeclareMathOperator{\Ext}{Ext}
\DeclareMathOperator{\modcat}{mod} \renewcommand{\mod}{\modcat}
\DeclareMathOperator{\setcat}{set}
\DeclareMathOperator{\Ab}{Ab}
\DeclareMathOperator{\add}{add}
\DeclareMathOperator{\rep}{rep}
\DeclareMathOperator{\Rep}{Rep}
\DeclareMathOperator{\inj}{inj}
\DeclareMathOperator{\Inj}{Inj}
\DeclareMathOperator{\proj}{proj}
\DeclareMathOperator{\Proj}{Proj}
\DeclareMathOperator{\pdim}{pdim}
\DeclareMathOperator{\idim}{idim}
\DeclareMathOperator{\Ob}{Ob}
\DeclareMathOperator{\Fac}{Fac}
\DeclareMathOperator{\Sub}{Sub}
\DeclareMathOperator{\modni}{mod_{non-inj}}
\def\hopen#1,#2{[#1,#2)}
\def\sfI{\mathsf{I}}
\def\sfP{\mathsf{P}}
\def\sfRI{\mathsf{RI}}
\def\mcA{\mathcal{A}}
\def\mcB{\mathcal{B}}
\def\mcC{\mathcal{C}}
\def\mcD{\mathcal{D}}
\def\mcE{\mathcal{E}}
\def\mcF{\mathcal{F}}
\def\mcS{\mathcal{S}}
\def\mcT{\mathcal{T}}
\def\mcX{\mathcal{X}}
\def\mbC{{\mathbb C}}
\def\mbR{{\mathbb R}}
\def\mbT{{\mathbb T}}
\def\mbX{{\mathbb X}}
\newcommand{\ctt}{\mathsf{c}}
\newcommand{\ctf}{\mathsf{d}}
\newcommand{\tors}{\mathsf{t}}
\newcommand{\tfree}{\mathsf{f}}
\newcommand{\e}{\mathsf{e}}
\newcommand{\m}{\mathsf{m}}
\newcommand{\tensor}{\otimes}
\newcommand{\isom}{\cong}
\newcommand{\op}{\mathsf{op}}
\renewcommand{\to}{\longrightarrow}
\newcommand{\shortto}{\rightarrow}
\newcommand{\into}{\lhook\joinrel\to}
\newcommand{\onto}{\to\mathrel{\mspace{-25mu}}\to}
\newcommand{\field}{\mathbf{k}}
\definecolor{lightergray}{gray}{0.90}
\newcommand\tikzpo[2]
\newcommand\tikzpbadhoc[2]
\title
[Representation theory of filtered hierarchical clustering]
{Cotorsion torsion triples and the representation theory of filtered hierarchical clustering}
\author[U.~Bauer]{Ulrich Bauer}
\address{Department of Mathematics, Technical University of Munich, Germany}
\email{mail@ulrich-bauer.org}
\author[M.B.~Botnan]{Magnus B. Botnan}
\address{Department of Mathematics, VU University Amsterdam, The Netherlands}
\email{m.b.botnan@vu.nl}
\author[S.~Oppermann]{Steffen Oppermann}
\address{Department of Mathematical Sciences, NTNU, Trondheim, Norway}
\email{steffen.oppermann@ntnu.no}
\author[J.~Steen]{Johan Steen}
\address{Mathematics Department, UC Santa Cruz, USA}
\email{jsteen1@ucsc.edu}
\subjclass[2010]{16G20, 16S90 (primary); 55N99 (secondary)}
\keywords{Hierarchical clustering, multiparameter persistence, quiver representation theory, torsion theory}
\begin{document}

\begin{abstract}
    We give a full classification of representation types of the subcategories of representations of an $m \times n$ rectangular grid with monomorphisms (dually, epimorphisms) in one or both directions, which appear naturally in the context of clustering as two-parameter persistent homology in degree zero.
    We show that these subcategories are equivalent to the category of all representations of a smaller grid, modulo a finite number of indecomposables.
    This equivalence is constructed from a certain cotorsion torsion triple, which is obtained from a tilting subcategory generated by said indecomposables.
\end{abstract}

\maketitle


\section{Introduction}

Clustering analysis encompasses a wide range of statistical methods for inferring structure in data. Loosely speaking, a clustering method aims to partition the data into clusters such that data of different clusters are significantly more different than data belonging to the same cluster. Such methods play an important role in unsupervised data analysis.

In order to provide context for our results, we first discuss why clustering methods are commonly based on the choice of two parameters, explaining the relevance for studying two parameter grid representations, and also explaining why the special case of epimorphisms in one parameter direction arises from this scenario.

Define a \emph{clustering method} to be a map $\mathcal{C}$ which associates to every finite metric space $(M,d)$ a surjective set map $\mathcal{C}(M,d)\colon M \onto C(M,d)$ from the points of $M$ to a set of \emph{clusters} $C(M,d)$. We say that two elements $m$ and $m'$ in $M$ are \emph{clustered} (with respect to $\mathcal{C}$) if $\mathcal{C}(M,d)(m) = \mathcal{C}(M,d)(m')$.  We write $\mathcal{C}(M,d) \cong \mathcal{C}(M,d')$ if \[\mathcal{C}(M,d)(m)  =\mathcal{C}(M,d)(m') \iff \mathcal{C}(M,d')(m) = \mathcal{C}(M,d')(m').\]
\begin{ex}
    Fix an $\varepsilon\geq 0$ and define the \emph{geometric graph at scale $\varepsilon$}, denoted by $\mathcal{G}_\varepsilon(M,d)$, to be the graph on $M$ with an edge connecting $m$ and $m'$ if and only if $d(m,m')\leq \varepsilon$. The canonical surjection $\mathcal{S}_\varepsilon(M,d) \colon M \onto \pi_0\big(\mathcal{G}_\varepsilon(M,d)\big)$ to the connected components of the geometric graph defines a clustering method.
    If $M$ is a subspace of some Euclidean space (or, more generally, of a length metric space), then an equivalent clustering method is given by considering the connected components of a union of closed balls, $M\to \pi_0\big(\bigcup_{x \in M}B_{\varepsilon(x)/2}\big)$, as illustrated in \cref{fig:zoomin}.
    \label{ex1}
\end{ex}

Given the abundance of different clustering techniques, it is natural to ask what kind of properties a clustering method may satisfy. Consider the following two desirable properties of a clustering method:

\begin{itemize}[leftmargin=*]
    \item \textbf{Scale invariance}: For all $\alpha>0$, $\mathcal{C}(M,d) \cong \mathcal{C}(M,\alpha\cdot d)$.
    \item \textbf{Consistency}: For any two metrics $d$ and $d'$ on $M$ satisfying
    \begin{itemize}
        \item $d'(x,y) \geq d(x,y)$ if $\mathcal{C}(M,d)(x)\neq \mathcal{C}(M,d)(y)$, and
        \item $d'(x,y) \leq d(x,y)$ if $\mathcal{C}(M,d)(x)=\mathcal{C}(M,d)(y)$,
    \end{itemize}
    we have $\mathcal{C}(M,d) \cong \mathcal{C}(M,d')$.
\end{itemize}
It is not hard to see that the clustering method $\mathcal{S}_\varepsilon$ is consistent but not scale invariant, whereas a normalized version of $\mathcal{S}_\varepsilon$ would be scale invariant but not consistent. An immediate consequence of a theorem by Kleinberg \cite[Theorem 3.1]{kleinberg2003impossibility} is that an isometry invariant clustering method simultaneously satisfying scale invariance and consistency must be trivial, either always returning a single cluster, or always returning a separate cluster for each point.

An implication of this is that (non-trivial) clustering methods are inherently unstable; a small perturbation of the input data may produce a vastly different graph. Furthermore, there might be no unique correct scale at which the data should be considered. Indeed, \cref{fig:zoomin} illustrates that what appears to be well-defined clusters at one scale, may reveal a finer structure upon inspection at a smaller scale.

\begin{figure}
    \centering
    \includegraphics[scale=0.8]{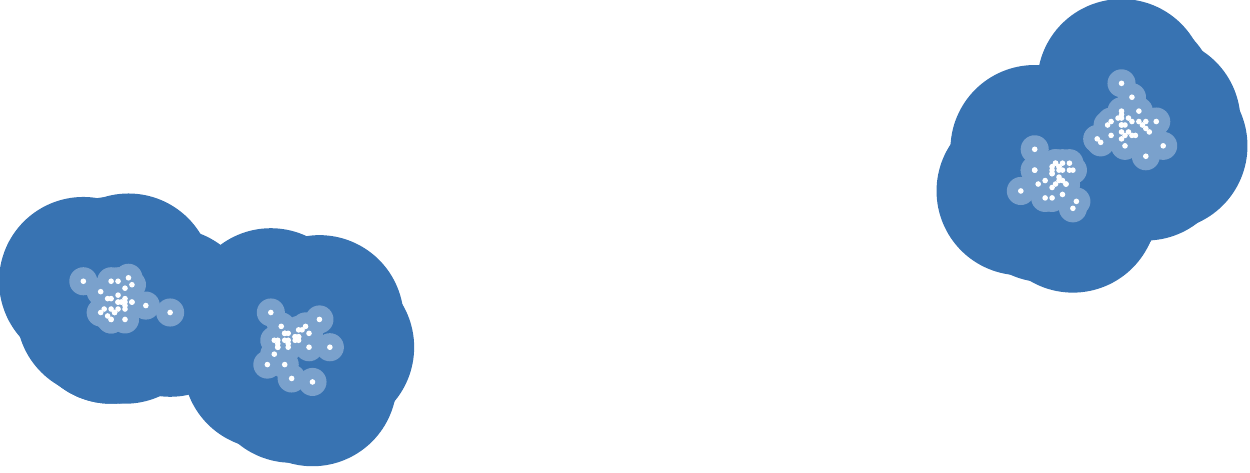}
    \caption{Example illustrating the multiscale nature of the clustering problem.  For single linkage clustering,  the clusters correspond to the connected components of a unions of balls.  At a coarse scale, two clusters (dark shading) are apparent. At a finer scale, each of these two clusters appear to decompose further into two subclusters (light shading).}
    \label{fig:zoomin}
\end{figure}

One may attempt to rectify these issues by considering \emph{hierarchical clustering methods}. Such methods do not assign a single set map to the input metric space, but rather a one-parameter family of surjective set maps $\{\mathcal{C}_\varepsilon(M,d)\colon M \onto C_\varepsilon(M,d)\}_{\varepsilon\geq0}$,
where the codomain $C_\varepsilon(M,d)$ is the set of clusters at scale $\varepsilon$,
such that
\[
    \mathcal{C}_\varepsilon(M,d)(m_1) = \mathcal{C}_\varepsilon(M,d)(m_2) \implies \mathcal{C}_{\varepsilon'}(M,d)(m_1) = \mathcal{C}_{\varepsilon'}(M,d)(m_2)
\]
for all $\varepsilon\leq \varepsilon'$.

The output of a hierarchical clustering method applied to a metric space can be visualized by means of a rooted tree called a \emph{dendrogram}, see \cref{fig:dendrogram} for an example.

\begin{figure}
    \centering
    \includegraphics[scale=0.4]{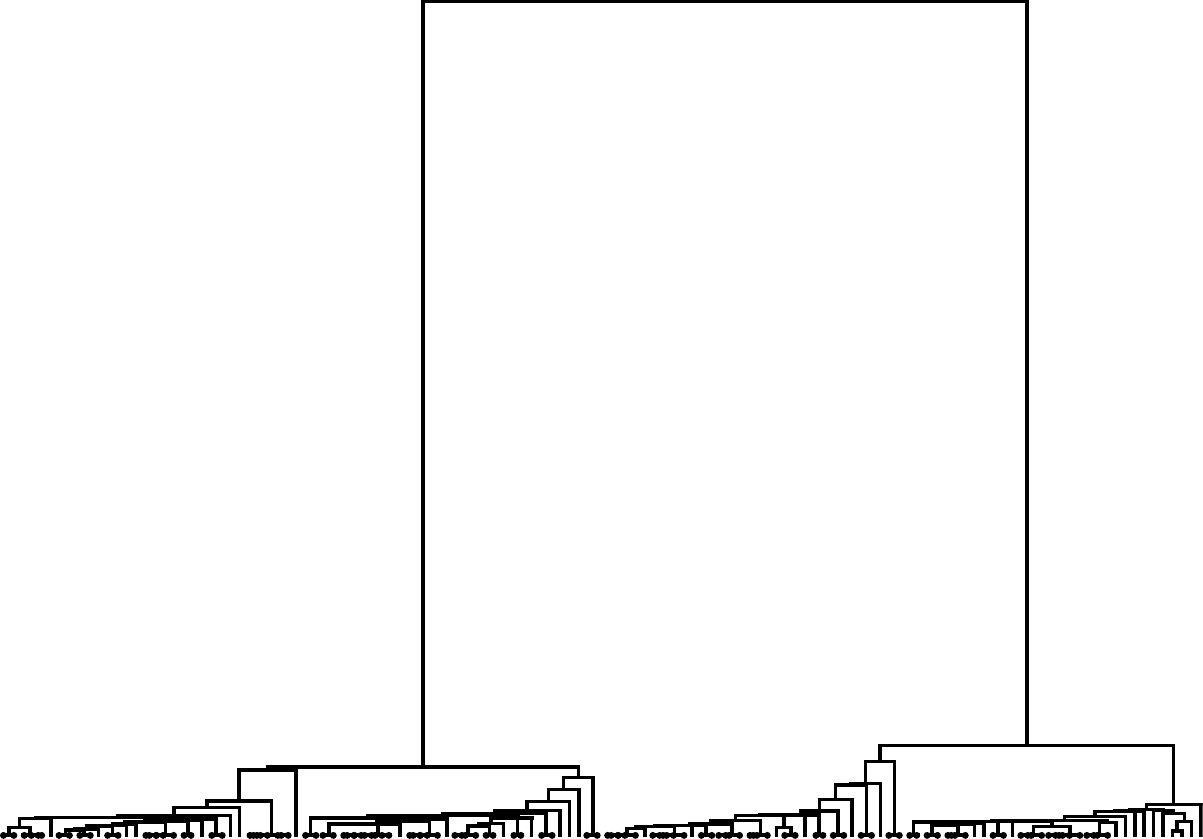}
    \caption{Dendrogram for the point set shown in \cref{fig:zoomin}.}
    \label{fig:dendrogram}
\end{figure}

\begin{ex}
    \label{ex:singlelinkage}
    The clustering method from \cref{ex1} based on geometric graphs $\mathcal{G}_\varepsilon(M,d)$ extends naturally to a hierarchical clustering method. Indeed, observe that all $\varepsilon\leq \varepsilon'$, the inclusion of graphs $\mathcal{G}_\varepsilon(M,d) \into \mathcal{G}_{\varepsilon'}(M,d)$ induces a surjection $\pi_0\circ\mathcal{G}_\varepsilon(M,d) \onto \pi_0\circ\mathcal{G}_{\varepsilon'}(M,d)$ of connected components.
    Thus, assigning to a metric space $(M,d)$ the family of canonical epimorphisms $\{\mathcal{S}_{\varepsilon}(M,d) \colon M \to \pi_0\circ \mathcal{G}_{\varepsilon}(M,d)\}_{\varepsilon \geq 0}$ yields a hierarchical clustering method, commonly referred to as \emph{single linkage hierarchical clustering}. For other hierarchical methods such as \emph{average} and \emph{complete linkage hierarchical clustering}, see, e.g., \cite{Rokach2005}.
\end{ex}

Carlsson and Memoli \cite{MR3032681} give examples showing that average and complete linkage clustering are unstable with respect to perturbation of the input data. Intuitively, this means that a small distortion to the input data may produce a vastly different rooted tree. In contrast, they show that single linkage clustering is stable with respect to the Gromov--Hausdorff distance. Furthermore, they go on to classify single linkage clustering as the \emph{unique functorial} hiearchical clustering method (under mild additional assumptions; see \cite[Theorem 7.1]{MR3032681} for the precise statement). In practice, however, other hierarchical methods are preferred over the single linkage, as it suffers from the so-called \emph{chaining effect}. Intuitively, this means that single linkage clustering may fail to detect two dense regions connected by regions of low density.

One may attempt to rectify this by considering the hierarchical clustering at a specific density threshold. However, similar to fixing a geometric scale above, such choices would lead to instability in the method. Therefore we will consider clustering methods which offer a multi-scale view of both density and scale. This approach to clustering was first considered in \cite{MR2722123}.

\begin{figure}
    \centering
    {
        \renewcommand{\arraystretch}{2}
        \begin{tabular}{cc}
            \includegraphics[scale=0.4]{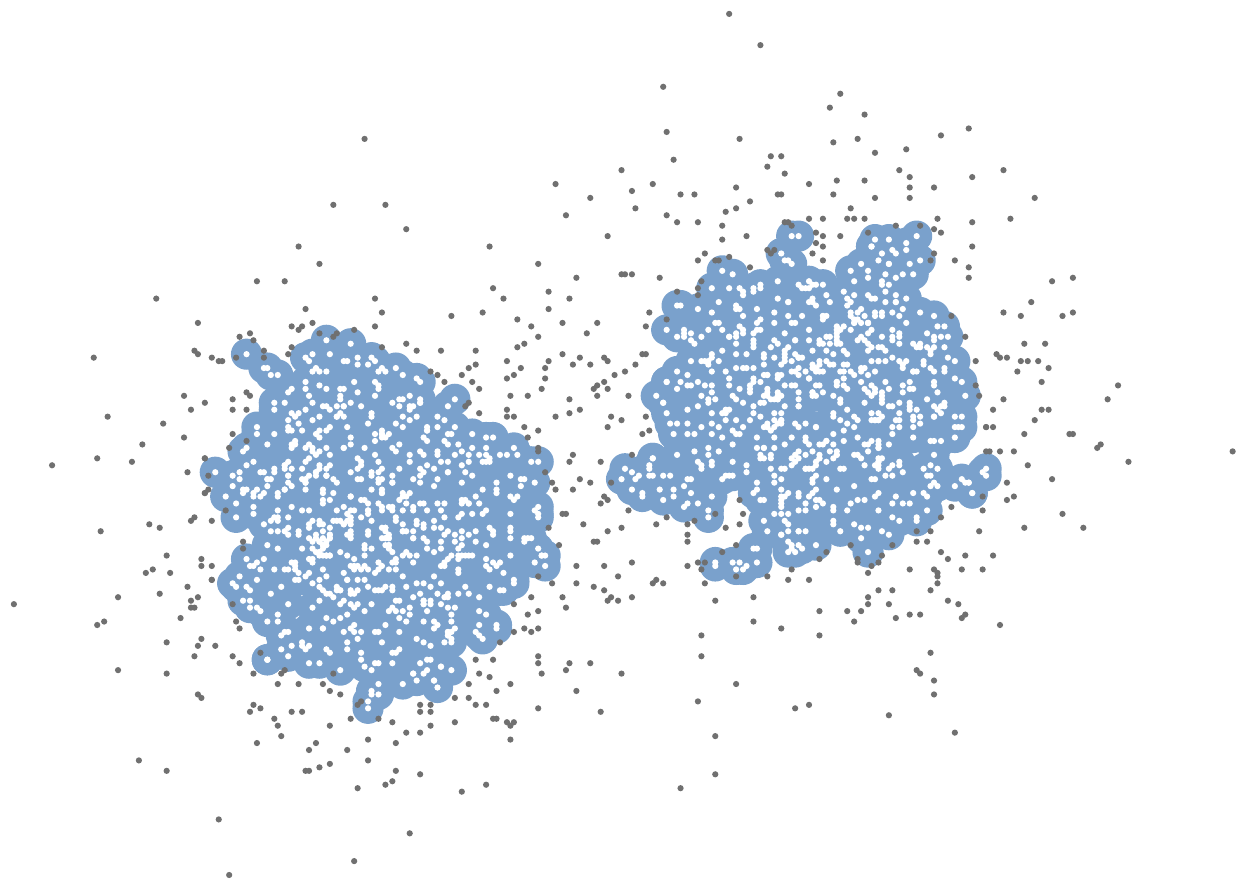}
            &
            \includegraphics[scale=0.4]{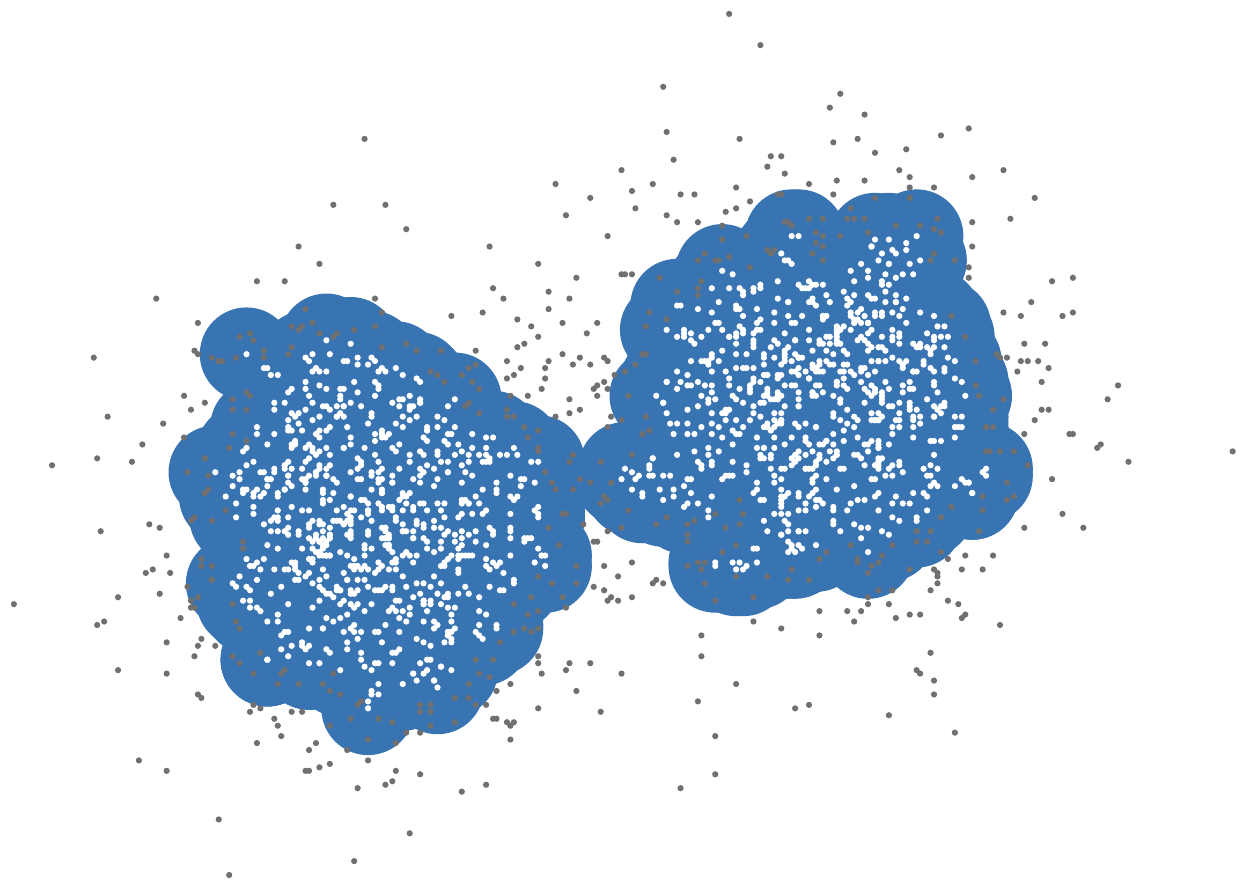}
            \\
            \includegraphics[scale=0.4]{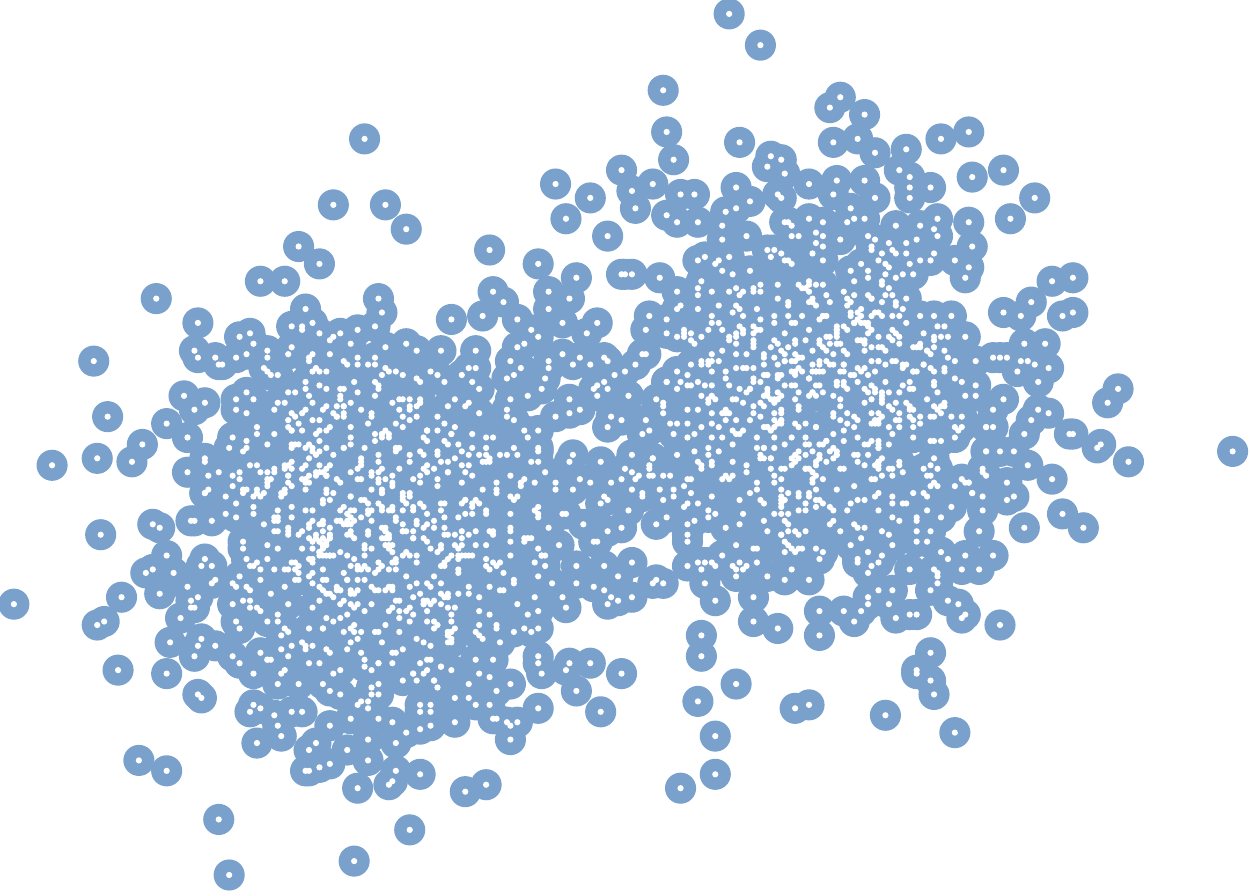}
            &
            \includegraphics[scale=0.4]{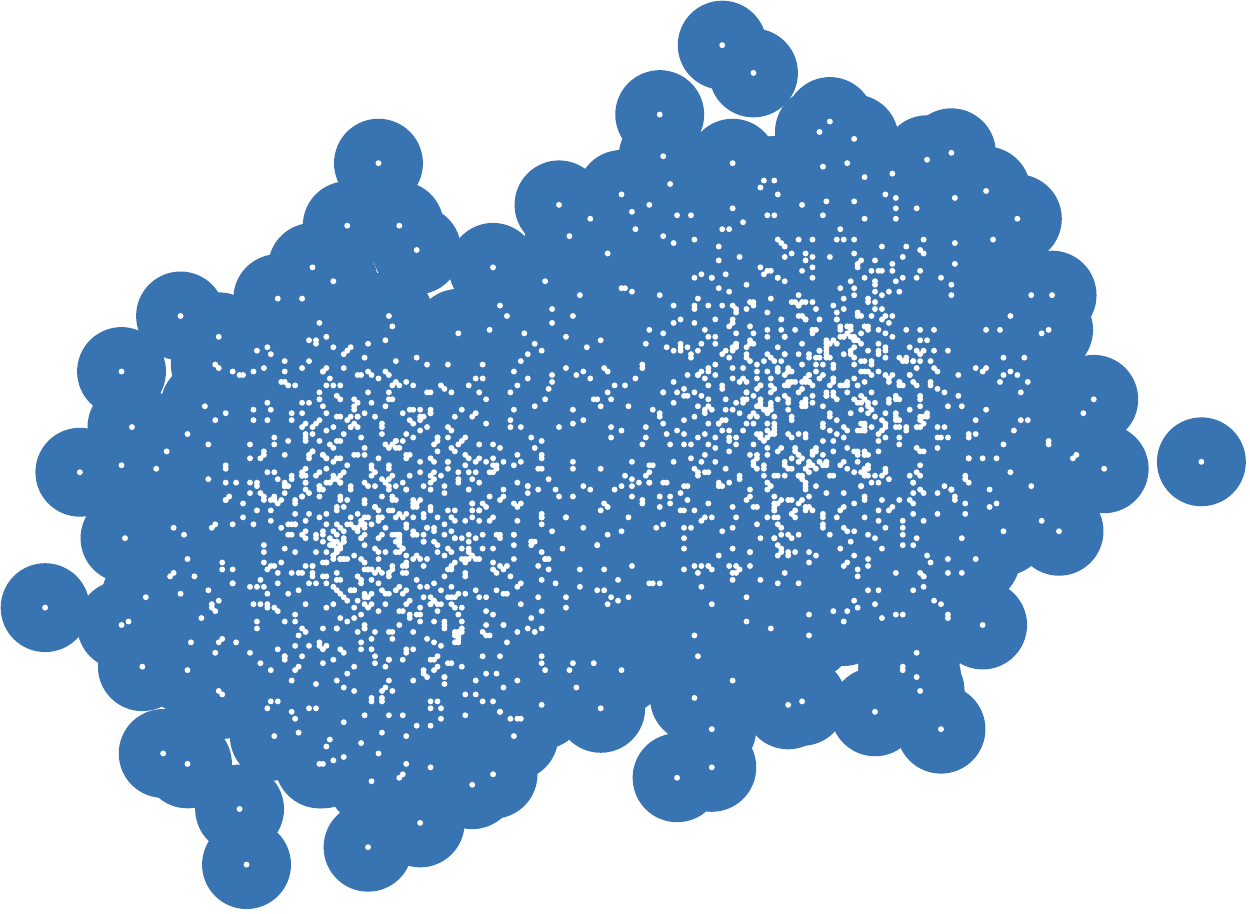}
        \end{tabular}
    }
    \caption{Example illustrating hierarchical clustering method for filtered spaces.  Points are drawn from a mixture of two Gaussians, and filtered using a kernel density estimator.  The horizontal arrangement corresponds to geometric scale, while the vertical arrangement corresponds to the density threshold.}
    \label{fig:densityclustering}
\end{figure}

Define a \emph{filtration} of a metric space $(M,d)$ to be a family of subspaces $\{(M_\delta, d)\}_{\delta\geq0}$ such that $M_\delta\subseteq M_{\delta'}$ for $\delta\leq \delta'$.
For example, let $\sigma\colon M\to [0, \infty)$ be any function (e.g., a density estimate) and define a filtration of $M$ by letting $M_\delta = \sigma^{-1}[\delta, \infty)$.

A \emph{filtered hierarchical clustering method} is a map which associates to every $M_\delta$ in a filtration of $M$ a one-parameter family of surjective set maps $\{\mathcal{C}_\varepsilon(M_\delta,d)\colon M_\delta \onto C_{\varepsilon,\delta}\}_{\varepsilon\geq0}$ such that
\[
    \mathcal{C}_\varepsilon(M_\delta,d)(m_1) = \mathcal{C}_\varepsilon(M_{\delta},d)(m_2) \implies \mathcal{C}_{\varepsilon'}(M_{\delta'},d)(m_1) = \mathcal{C}_{\varepsilon'}(M_{\delta'},d)(m_2)
\]
for all $\delta\leq \delta'$ and $\varepsilon\leq \varepsilon'$.
As an example,
the clustering method of \cref{ex1,ex:singlelinkage} naturally extends to a hierarchical clustering method for filtered spaces by considering the canonical maps
$\mathcal{S}_{\varepsilon,\delta}(M,d) \colon M_\delta \to \pi_0\circ \mathcal{G}_{\varepsilon}(M_\delta,d)$.
Typically, the hierarchical clustering in the $\varepsilon$ parameter depends only on the subspace $M_\delta$ but not on the parameter $\delta$ itself, i.e.,
\[
    M_\delta = M_{\delta'}
    \implies
    \mathcal{C}_\varepsilon(M_{\delta},d) = \mathcal{C}_\varepsilon(M_{\delta'},d)
\]
for all $\delta\leq \delta'$.
In this case, by finiteness of the metric space $M$, there is only a finite number of distinct subspaces $M_\delta$, and for each $M_\delta$ there is only a finite number of distinct clusterings $\mathcal{C}_\varepsilon(M_\delta,d)$.

For a filtration of $M$ defined by a  density estimate $\sigma \colon M\to [0, \infty)$, the collection of maps $\{\mathcal{S}_{\varepsilon, \delta}(M)\}_{\varepsilon, \delta\geq0}$ contains plentiful information. For a fixed $\delta$, the family $\{\mathcal{S}_{\varepsilon, \delta}(M)\}_{\varepsilon\geq0}$ simply yields single linkage clustering of all points in $m\in M$ with $\sigma(m)\geq \delta$. Similarly, fixing a scale $\varepsilon$ and by considering the subset $\{\mathcal{S}_{\varepsilon,\delta}(M)\}_{\delta\geq0}$ gives a density-based clustering method at a fixed geometric scale, akin to the
\emph{Topological Mode Analysis}
method proposed in \cite{MR3144911}.
Unfortunately, whereas hierarchical clustering methods enjoy simple graphical representations, it is not clear how to visualize hierarchical clustering methods for filtered spaces in any reasonable way.

\subsection{From sets to vector spaces}
We will now rephrase the above constructions in terms of functors. Given a finite metric space $(M,d)$ and a hierarchical clustering method~$\mathcal{C}$,
for all $\varepsilon\leq \varepsilon'$
there is a unique surjective set map $\mu_{\varepsilon,\varepsilon'}: C_\varepsilon \to C_{\varepsilon'}$ such that $\mathcal{C}_{\varepsilon'}(M,d) = \mu_{\varepsilon,\varepsilon'} \circ \mathcal{C}_\varepsilon(M,d)$. This defines a functor $\mathcal{C}(M,d)\colon [0, \infty)\to \setcat$, and all the internal morphisms of this functor are surjective.
For any field $\field$, post-composing the functor $\mathcal{C}(M,d)$ with the free functor $F_\field: \setcat \to \modcat \field$ to the category of finite dimensional $\field$-vector spaces, we get a functor $F_\field\circ \mathcal{C}(M,d)\colon [0, \infty) \to \modcat \field$.
Furthermore, it is easy to see see that all the internal morphisms are epimorphisms.  Note that for hierarchical clustering methods defined using the path-component functor $\pi_0$, such as single linkage clustering $\pi_0\circ \mathcal{G}_{-}(M,d)$, this can also be interpreted as applying the 0-th homology functor $H_0(-;\field)$ with coefficients in $\field$, since $H_0(-;\field) \simeq F_\field \circ \pi_0$.  In this case, the resulting functor $H_0\big(\mathcal{G}_-{(M,d)} ;\field\big) \colon [0, \infty) \to \modcat \field$ is the \emph{persistent homology} in degree zero of the geometric graphs.

In general, functors of the type $[0, \infty) \to \modcat \field$ (not necessarily assuming epimorphisms) are commonly referred to as \emph{persistence modules}, and they are the main objects of study in the field of topological data analysis. What makes such functors so useful is that they decompose as a direct sum of persistence modules $\field_I$ (see the beginning \cref{sec:bg} for the precise definition), where $I \subseteq [0,\infty)$ is an interval \cite{MR3323327}. This collection of intervals, typically referred to as a \emph{persistence barcode}, is then used to extract topological information from the data at hand; a ``long''
interval corresponds to a topological feature that persists over a significant range.

The process of linearizing through post-composition with the 0-th homology functor allows for the decomposition of the rooted tree associated to the functor $\mathcal{C}(M,d)$ into a collection of intervals describing the evolution of the clusters as we increase the scale parameter. The reduction in complexity comes at the expense that we no longer have precise information of which points belong to each cluster. Even though having this precise information of the clusters may be helpful for visualization, the persistence barcode is better suited for statistical analysis and machine learning.

A natural question to ask is whether filtered hierarchical clustering methods can be linearized in a similar fashion such that they decompose into simple components which can be used to interpret the evolution of connected components across multiple scales.

Following the arguments above, we see that a filtered hierarchical clustering method transforms a  filtered finite metric space $\mathcal M = (M_\delta)_\delta$ into a functor
\[
    \mathcal{C}(\mathcal M,d)\colon [0, \infty)^2\to \setcat,
    \quad
    \text{where}
    \quad
    \mathcal{C}(\mathcal M,d)_{(\varepsilon, \delta)} := \mathcal{C}_\varepsilon(M_\delta, d).
\]
Similarly to above, $\mathcal{C}(\mathcal M,d)_{(\varepsilon, \delta)} \onto \mathcal{C}(\mathcal M,d)_{(\varepsilon', \delta)}$ is a surjection for all $\delta \geq0$ and all $\varepsilon\leq \varepsilon'$, and post-composing with $F_\field$ yields a functor $F_\field\circ \mathcal{C}(\mathcal M,d) \colon [0, \infty)^2\to \modcat\field$ where the morphisms $F_\field\circ \mathcal{C}(\mathcal M,d)_{(\varepsilon, \delta)} \onto F_\field\circ \mathcal{C}(\mathcal M,d)_{(\varepsilon', \delta)}$ are all epimorphisms.

It is well known (see \cref{thm.types_for_grids} below) that the representation theory of functors $[0, \infty)^2 \to \modcat \field$ is very complicated. But what about the subcategory of functors carrying the additional property that all morphisms are epimorphic along one or both parameters? Or the subcategory of functors whose morphisms are injective along one parameter and surjective along the other? In what follows we shall precisely capture the representation type of such functors under the assumption that they can be re-indexed over a finite regular grid, which is typically the case for filtered hierarchical clustering methods, as discussed before.

To make this precise, denote by $\vec A_n$ the quiver $1 \to 2 \to \cdots \to n$. Alternatively, the reader may think of $\vec A_n$ as the poset $\{ 1< 2 < \cdots < n \}$, or its poset category. We denote by $\vec A_m \otimes \vec A_n$ the quiver with relations given by the fully commutative grid
\[
    \begin{tikzcd}[row sep={9ex,between origins},column sep={10ex,between origins}]
        (1,1) \ar[r] \ar[d] & (2,1) \ar[r] \ar[d] & (3,1) \ar[r,dotted] \ar[d] & (m,1) \ar[d] \\
        (1,2) \ar[r] \ar[d,dotted] & (2,2) \ar[r] \ar[d,dotted] & (3,2) \ar[r,dotted] \ar[d,dotted]
        & (m, 2) \ar[d,dotted] \\
        (1,n) \ar[r] & (2,n) \ar[r] & (3,n) \ar[r,dotted] & (m,n)
    \end{tikzcd}
\]
Equivalently, we may think of $\vec A_m \otimes \vec A_n$ as the product of $\vec A_m$ and $\vec A_n$ in the category of posets or in the category of small categories.
We denote by $\rep_\field(\vec A_m \otimes \vec A_n)$ the category of fully commutative grids of finite dimensional $\field$-vector spaces. There is a complete classification of when these categories are finite or tame, but unfortunately these are very few cases beyond the one-parameter situation:

\begin{thm}[{\cite[Theorem 5]{MR1808681}, \cite[Theorem 2.5]{MR1273693}}]
    \label{thm.types_for_grids}
    The category of representations $\rep_\field ( \vec A_m \otimes  \vec A_n )$ contains finitely many indecomposables precisely in the cases
    \begin{itemize}
        \item $m = 1$ or $n = 1$,
        \item $(m,n) \in \{ (2,2), (2,3), (2,4), (3,2), (4,2)\}$.
    \end{itemize}
    It is of tame representation type precisely in the cases
    \begin{itemize}
        \item $(m,n) \in \{ (2,5), (3,3), (5,2)\}$.
    \end{itemize}
    In all other cases it is of wild representation type.
\end{thm}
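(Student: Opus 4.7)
The plan is to translate the problem to the module category of the incidence algebra of the product poset $[m]\times[n]$, equivalently the tensor product algebra $\field\vec A_m\otimes_\field\field\vec A_n$. This algebra is triangular of finite global dimension and simply connected in Bongartz's sense, so its representation type is governed by the Tits form on $\mathbb{Z}^{mn}$ associated to the bound quiver with commutativity relations, with positive definiteness characterising finite type and positive semi-definiteness of corank $2$ being the hallmark of tubular (tame) type.

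The line cases $m=1$ or $n=1$ reduce to Gabriel's theorem for equioriented type $A$. For the small finite cases $(2,n)$ with $n \leq 4$ (and their transposes, by the evident $m\leftrightarrow n$ duality), I would exploit the description of $\rep_\field(\vec A_2\otimes\vec A_n)$ as the \emph{arrow category} of $\rep_\field\vec A_n$, i.e.\ morphisms between $\vec A_n$-representations, equivalently modules over the upper triangular matrix ring on $\field\vec A_n$. For these small $n$ the Tits form is positive definite; alternatively, one enumerates indecomposables explicitly by running the Auslander--Reiten knitting algorithm starting from the projectives and verifying termination. The square case $(2,2)$ recovers the classical commutative square with its $9$ indecomposables.

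For the three tame cases $(2,5)$, $(3,3)$, $(5,2)$, the key input is that these algebras are (derived equivalent to) \emph{tubular algebras} in Ringel's sense: the Tits form becomes positive semi-definite of corank $2$, and tameness then follows from Ringel's classification, which describes the module category as a one-parameter family of tubes interpolated by wings of preprojective and preinjective modules.

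For the remaining cases one exhibits a wild full convex subcategory. The tight boundary is witnessed by the minimal critical cases, namely $(2,6)$ and $(3,4)$ together with their transposes; in each one locates an explicit \emph{hypercritical} full subcategory whose Tits form is indefinite, from which wildness propagates to all larger grids by passage to subcategories. Constructing these hypercritical witnesses on the tame/wild boundary is the main obstacle, and is the technical core of the classification recorded in \cite{MR1808681, MR1273693}.
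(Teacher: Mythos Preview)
The paper does not prove this theorem. It is stated as a citation to the references \cite{MR1808681} and \cite{MR1273693}, and the only argument supplied in the paper is the brief illustration immediately following the statement, showing how representations of a specific wild quiver (a $5$-arm star) lift to $\rep_\field(\vec A_4 \otimes \vec A_3)$, thereby indicating why the remaining cases are wild. No argument is given for the finite or tame cases.

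Your sketch is a reasonable outline of how the classification in those cited papers is actually carried out: the incidence algebra of the grid is simply connected and triangular, so the Bongartz--de la Pe\~na criterion reduces finite type to positive definiteness of the Tits form; the three boundary cases are tubular in Ringel's sense; and wildness for the larger grids follows by locating a hypercritical convex subcategory. This is indeed the strategy of Leszczy\'nski and Leszczy\'nski--Skowro\'nski. But since the present paper treats the theorem purely as imported background, there is no proof here against which to compare your proposal; your outline goes well beyond what the paper itself offers.
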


It is not hard to see that the remaining cases are of wild representation type. As an example, the following diagrams show that any representation of a particular quiver of wild representation type lifts to a representation of $\vec A_4 \otimes \vec A_3$:
\[
   \begin{tikzpicture}[xscale=.65,yscale=.6]
        \node (V1) at (0,0) {$V_1$};
        \node (V2) at (2,0) {$V_2$};
        \node (V3) at (2,-2) {$V_3$};
        \node (V4) at (2,2) {$V_4$};
        \node (V5) at (4,0) {$V_5$};
        \node (V6) at (6,0) {$V_6$};
        \node(W1) at (9,0) {$V_1$};
        \node (W2) at (11,0) {$V_2$};
        \node (W3) at (11,-2) {$V_3$};
        \node (W4) at (11,2) {$V_4$};
        \node (W5) at (13,0) {$V_5$};
        \node (W6) at (15,0) {$V_6$};
        \node (X1) at (9,-2) {$V_1$};
        \node (O1) at (9,2) {$0$};
        \node (X3) at (13,2) {$V_4$};
        \node (X4) at (15,2) {$V_4$};
        \node (O3) at (13,-2) {$0$};
        \node (O4) at (15,-2) {$0$};
        \path[->,font=\scriptsize]
            (V1) edge  (V2)
            (V4) edge (V2)
            (V2) edge(V3)
            (V2) edge (V5)
            (V5) edge (V6)
            (W1) edge (W2)
            (W2) edge (W5)
            (W5) edge (W6)
            (W2) edge (W3)
            (W4) edge (W2)
            (O1) edge (W4)
            (W4) edge node[auto] {$\id$} (X3)
            (X3) edge node[auto] {$\id$} (X4)
            (O1) edge (W1)
            (W3) edge (O3)
            (W5) edge (O3)
            (W6) edge (O4)
            (W1) edge node[auto] {$\id$} (X1)
            (X1) edge (W3)
            (X3) edge (W5)
            (X4) edge (W6)
            (O3) edge (O4);
   \end{tikzpicture}
\]

As discussed above, the representations coming from hierarchical clustering on a filtered space will contain epimorphisms on all horizontal arrows. We denote by $\rep_\field^{\e, *}(\vec A_m \otimes \vec A_n)$ and $\rep_\field^{\e,\e}(\vec A_m \otimes \vec A_n)$ the full subcategories of $\rep_{\field}(\vec A_m \otimes \vec A_n)$ consisting of all grid-shaped diagrams of vector spaces such that the horizontal morphisms, respectively all morphisms, are epimorphisms. Given that these are proper subcategories, one may hope that their indecomposables are classifiable for a wider range of values for $m$ and $n$.

One starting point for this project was the observation that the subcategory of representations $\rep_\field^{\e, *}(\vec A_m \otimes \vec A_n)$ can be studied using Auslander--Reiten-theory.
To begin with, one observes that this subcategory is closed under extensions and quotients. By \cite[Corollaries~3.7 and 3.8]{MR617088}, it has almost split sequences, and these are induced by almost split sequences in  $\rep_\field(\vec A_m \otimes \vec A_n)$. In the cases that $\rep_\field^{\e, *}(\vec A_m \otimes \vec A_n)$ contains only finitely many indecomposables, these can be obtained by constructing the Auslander--Reiten quiver, starting from the injectives.

One then observes that these finite Auslander--Reiten quivers look very similar to Auslander--Reiten quivers for categories $\rep_\field (\vec A_m \otimes \vec A_{n-1})$ --- see \cref{fig:compare-ar-quivers}. Only the thin modules with rectanglular support ``in the lower left corner'', modules which we will denote by $\field_{\{1, \ldots, i \} \times \{ j, \ldots, m \}}$ --- see \cref{subfig.C_e_star} --- do not seem to appear in this correspondence.

\begin{figure}
    \centering
    \begin{tikzpicture}[scale=-1.4]
        \node (0) at (1,0) {$\begin{smallmatrix} 1 & 0 \\ 0 & 0 \\ 0 & 0 \end{smallmatrix}$};
        \node (1) at (2,-1) {$\begin{smallmatrix} 1 & 0 \\ 1 & 0 \\ 0 & 0 \end{smallmatrix}$};
        \node (2) at (2,1) {$\begin{smallmatrix} 1 & 1 \\ 0 & 0 \\ 0 & 0 \end{smallmatrix}$};
        \node (3) at (3,0) {$\begin{smallmatrix} 1 & 1 \\ 1 & 0 \\ 0 & 0 \end{smallmatrix}$};
        \node[draw,rectangle,inner sep=2pt,outer sep=2pt] (4) at (3,-2) {$\begin{smallmatrix} 1 & 0 \\ 1 & 0 \\ 1 & 0 \end{smallmatrix}$};
        \node (5) at (4,-1) {$\begin{smallmatrix} 1 & 1 \\ 1 & 0 \\ 1 & 0 \end{smallmatrix}$};
        \node (6) at (4,1) {$\begin{smallmatrix} 0 & 0 \\ 1 & 0 \\ 0 & 0 \end{smallmatrix}$};
        \node (7) at (4,0) {$\begin{smallmatrix} 1 & 1 \\ 1 & 1 \\ 0 & 0 \end{smallmatrix}$};
        \node (8) at (5,0) {$\begin{smallmatrix} 1 & 1 \\ 2 & 1 \\ 1 & 0 \end{smallmatrix}$};
        \node (9) at (6,-1) {$\begin{smallmatrix} 0 & 0 \\ 1 & 1 \\ 0 & 0 \end{smallmatrix}$};
        \node (10) at (6,1) {$\begin{smallmatrix} 1 & 1 \\ 1 & 1 \\ 1 & 0 \end{smallmatrix}$};
        \node[draw,rectangle,inner sep=2pt,outer sep=2pt] (11) at (6,0) {$\begin{smallmatrix} 0 & 0 \\ 1 & 0 \\ 1 & 0 \end{smallmatrix}$};
        \node (12) at (7,0) {$\begin{smallmatrix} 0 & 0 \\ 1 & 1 \\ 1 & 0 \end{smallmatrix}$};
        \node[draw,rectangle,inner sep=2pt,outer sep=2pt] (13) at (8,-1) {$\begin{smallmatrix} 0 & 0 \\ 0 & 0 \\ 1 & 0 \end{smallmatrix}$};
        \node[draw,rectangle,inner sep=2pt,outer sep=2pt] (14) at (7,2) {$\begin{smallmatrix} 1 & 1 \\ 1 & 1 \\ 1 & 1 \end{smallmatrix}$};
        \node[draw,rectangle,inner sep=2pt,outer sep=2pt] (15) at (8,1) {$\begin{smallmatrix} 0 & 0 \\ 1 & 1 \\ 1 & 1 \end{smallmatrix}$};
        \node[draw,rectangle,inner sep=2pt,outer sep=2pt] (16) at (9,0) {$\begin{smallmatrix} 0 & 0 \\ 0 & 0 \\ 1 & 1 \end{smallmatrix}$};
        \path[<-]
            (0) edge node {} (1)
            (0) edge node {} (2)
            (1) edge node {} (3)
            (1) edge node {} (4)
            (2) edge node {} (3)
            (3) edge node {} (5)
            (3) edge node {} (6)
            (3) edge node {} (7)
            (4) edge node {} (5)
            (5) edge node {} (8)
            (6) edge node {} (8)
            (7) edge node {} (8)
            (8) edge node {} (9)
            (8) edge node {} (10)
            (8) edge node {} (11)
            (9) edge node {} (12)
            (10) edge node {} (12)
            (10) edge node {} (14)
            (11) edge node {} (12)
            (12) edge node {} (13)
            (12) edge node {} (15)
            (13) edge node {} (16)
            (14) edge node {} (15)
            (15) edge node {} (16);

        \begin{scope}[shift={(8,4)},scale=-1]
            \node (0) at (1,0) {$\begin{smallmatrix} 0 & 0 \\ 0 & 1 \end{smallmatrix}$};
            \node (1) at (2,-1) {$\begin{smallmatrix} 0 & 1 \\ 0 & 1 \end{smallmatrix}$};
            \node (2) at (2,1) {$\begin{smallmatrix} 0 & 0 \\ 1 & 1 \end{smallmatrix}$};
            \node (3) at (3,0) {$\begin{smallmatrix} 0 & 1 \\ 1 & 1 \end{smallmatrix}$};
            \node (4) at (4,-1) {$\begin{smallmatrix} 0 & 0 \\ 1 & 0 \end{smallmatrix}$};
            \node (5) at (4,1) {$\begin{smallmatrix} 0 & 1 \\ 0 & 0 \end{smallmatrix}$};
            \node (6) at (4,0) {$\begin{smallmatrix} 1 & 1 \\ 1 & 1 \end{smallmatrix}$};
            \node (7) at (5,0) {$\begin{smallmatrix} 1 & 1 \\ 1 & 0 \end{smallmatrix}$};
            \node (8) at (6,-1) {$\begin{smallmatrix} 1 & 1 \\ 0 & 0 \end{smallmatrix}$};
            \node (9) at (6,1) {$\begin{smallmatrix} 1 & 0 \\ 1 & 0 \end{smallmatrix}$};
            \node (10) at (7,0) {$\begin{smallmatrix} 1 & 0 \\ 0 & 0 \end{smallmatrix}$};
            \path[->]
                (0) edge node {} (1)
                (0) edge node {} (2)
                (1) edge node {} (3)
                (2) edge node {} (3)
                (3) edge node {} (4)
                (3) edge node {} (5)
                (3) edge node {} (6)
                (4) edge node {} (7)
                (5) edge node {} (7)
                (6) edge node {} (7)
                (7) edge node {} (8)
                (7) edge node {} (9)
                (8) edge node {} (10)
                (9) edge node {} (10);
        \end{scope}
    \end{tikzpicture}
    \caption{The AR-quivers of $\rep^{\e,*}_\field \vec A_2 \tensor \vec A_3$ (top, with
        generators of
        $\rep^{\e,\m}_\field \vec A_2 \tensor \vec A_3$ highlighted) and of $\rep_\field \vec A_2 \tensor \vec A_2$ (bottom).}
    \label{fig:compare-ar-quivers}
\end{figure}

This observation led us to suspect that the following natural construction, transforming the horizontal epimorphsims into general morphisms using kernels, might be useful in this context --- a hope that was ultimately justified by the theorem below.

\begin{construction}
    \label{const:intro}
    Let $X = (X_{i,j}) \in \rep_{\field}^{\e, *}(\vec A_m \otimes \vec A_n)$. We denote by $\tors X$ the object in $\rep_{\field}(\vec A_m \otimes \vec A_n )$ given by
    \[ (\tors X)_{i,j} = \ker [ X_{i,j} \to X_{i,n} ]. \]
    One easily observes that $\tors$ defines a functor $\rep_{\field}^{\e, *}(\vec A_m \otimes \vec A_n) \to \rep_{\field}(\vec A_m \otimes \vec A_{n-1})$, and that the objects sent to $0$ by $\tors$ are precisely those where all vertical maps are monomorphisms, that is the objects in $\rep_{\field}^{\e, \m}(\vec A_m \otimes \vec A_n )$.
\end{construction}

\begin{thm}[See \cref{cor:epireps}]
    \label{thm:concrete}
    The functor $\tors$ induces an equivalence
    \[
        \frac{\rep_{\field}^{\e, *}(\vec A_m \otimes \vec A_n)}{\rep_{\field}^{\e, \m}(\vec A_m \otimes \vec A_n)} \to \rep_{\field}(\vec A_m \otimes \vec A_{n-1}).
    \]
    Moreover, $\rep_{\field}^{\e, \m}(\vec A_m \otimes \vec A_n)$ consists precisely of all finite direct sums of thin modules of the form $\field_{\{1, \ldots, i \} \times \{ j, \ldots, m \}}$, as depicted in \cref{subfig.C_e_star}.
\end{thm}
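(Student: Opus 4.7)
I would prove the two assertions in opposite order, since the classification motivates and feeds into the equivalence. To classify $\rep_{\field}^{\e, \m}(\vec A_m \otimes \vec A_n)$, observe that any $X$ in this category is recovered from two filtrations on $V := X_{1, n}$: the left-column filtration $V_j := X_{1, j}$ given by the vertical monomorphisms, and the filtration $K_i := \ker(V \twoheadrightarrow X_{i, n})$ on $V$ coming from the bottom-row epimorphisms (with $K_1 = 0$). The commutative square through $(1, j), (i, j), (1, n), (i, n)$ forces $X_{i, j}$ to be canonically identified with the image of $V_j$ in $V/K_i$; conversely any such pair of filtrations produces an object of $\rep_{\field}^{\e, \m}$. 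This yields an equivalence between $\rep_{\field}^{\e, \m}(\vec A_m \otimes \vec A_n)$ and the full subcategory of monomorphism-only representations of the type-$A_{n + m - 1}$ Dynkin quiver
\[
V_1 \to V_2 \to \cdots \to V_n \leftarrow K_m \leftarrow \cdots \leftarrow K_2.
\]

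By Gabriel's theorem, every representation of this quiver decomposes into thin interval summands, and the monomorphism-only condition is preserved under direct summands (since $\ker$ commutes with $\oplus$). A direct check of which thin intervals have all arrows monic shows that no arrow may exit the support into a zero part; this forces the interval to contain the central sink $V_n$. Translating back through the equivalence, such intervals correspond exactly to the thin rectangular modules $\field_{\{1, \ldots, I\} \times \{J, \ldots, n\}}$ with $I \in \{1, \ldots, m\}$ and $J \in \{1, \ldots, n\}$, establishing the second part of the theorem.

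For the equivalence, functoriality of $\tors$ is routine, and $\tors X = 0$ iff every composition $X_{i, j} \to X_{i, n}$ is monic, iff every individual vertical arrow of $X$ is monic, iff $X \in \rep_{\field}^{\e, \m}$; so $\tors$ descends to the quotient. To see the induced functor is an equivalence I would establish (i) essential surjectivity, by constructing for each $Y \in \rep_{\field}(\vec A_m \otimes \vec A_{n-1})$ some $X \in \rep_{\field}^{\e, *}$ with $\tors X \cong Y$ --- for instance by placing $Y$ in rows $1$ through $n-1$ and adjoining a final row chosen to restore horizontal epimorphy without altering the vertical kernels --- and (ii) fullness and faithfulness modulo the ideal of morphisms factoring through $\rep_{\field}^{\e, \m}$. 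The cleanest route to (ii) is via the cotorsion-torsion triple associated with the tilting subcategory generated by the thin rectangular modules classified above: $\tors$ can then be identified with the torsion projection of this triple, and the quotient equivalence becomes an instance of the general tilting/cotorsion-torsion machinery developed earlier in the paper. The main obstacle is precisely (ii) --- matching the abstract triple with the concrete $\tors$ and exhibiting a natural inverse functor $\sigma$ rather than merely an object-level section.
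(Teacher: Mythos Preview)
Your classification of $\rep_\field^{\e,\m}(\vec A_m \otimes \vec A_n)$ via the reduction to monomorphic representations of a type $A_{n+m-1}$ quiver with a central sink is correct and is a genuinely different, more elementary argument than the paper's. The paper does not invoke Gabriel's theorem here at all: instead it specializes the general machinery of \cref{cor.Andual} with $\mcA = \rep_\field \vec A_m$, so that $[\vec A_n,\Inj\mcA] = \rep_\field^{\e,*}(\vec A_m\otimes\vec A_n)$, and then the cotilting subcategory $\mbC = \add\{\sfP_j(I_i)\}$ is \emph{by construction} the collection of thin rectangles $\field_{\{1,\ldots,i\}\times\{j,\ldots,n\}}$. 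The identification $\mbC = \rep_\field^{\e,\m}$ then drops out of the last line of \cref{cor.Andual} (monomorphic $\vec A_n$-representations valued in injectives). In other words, the paper gets the classification as a free byproduct of the abstract setup, whereas you prove it independently; your route is more self-contained but the paper's makes clear that no separate argument is needed.

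For the equivalence itself you correctly recognize that the decisive step is the cotorsion--torsion machinery, and that is precisely what the paper does: it applies \cref{thm:tct} (via \cref{thm.QAop} and \cref{cor.Andual}), and the functor $\tors$ of \cref{const:intro} is exactly the torsion functor of the resulting torsion cotorsion triple. Two small points: first, in this dual setting the rectangles form a \emph{cotilting} rather than a tilting subcategory, so your phrasing should be adjusted. Second, your essential-surjectivity sketch (``adjoin a final row'') is unnecessary once you commit to the triple, since \cref{thm:tct} already supplies an explicit quasi-inverse $\widetilde\ctf$; and the paper does not use your classification as input to the equivalence --- the two assertions are obtained simultaneously from the same specialization, not sequentially.
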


\begin{figure}
    \begin{subfigure}{0.4\textwidth}
        \[
            \begin{tikzpicture}
                \node at (-.3, 2) {1};
                \node at (-.3, 1) {$j$};
                \node at (-.3, 0) {$n$};
                \node at (0, -.3) {1};
                \node at (2, -.3) {$i$};
                \node at (3,-.3) {$m$};
                \draw (0,0) rectangle (3,2);
                \draw [fill=lightergray] (0,0) rectangle (2,1);
            \end{tikzpicture}
        \]
        \subcaption{Diagrammatic depiction of the thin module $\field_{\{1, \ldots, i \} \times \{ j, \ldots, n \}}$ } \label{subfig.C_e_star}
    \end{subfigure}
    \qquad
    \begin{subfigure}{0.4\textwidth}
        \[
            \begin{tikzpicture}
                \node at (-.3, 2) {1};
                \node at (-.3, 1) {$j$};
                \node at (-.3, 0) {$n$};
                \node at (0, -.3) {1};
                \node at (2, -.3) {$i$};
                \node at (3,-.3) {$m$};
                \draw (0,0) rectangle (3,2);
                \draw [fill=lightergray] (0,0) -- (2,0) -- (2,1) -- (3,1) -- (3,2) -- (0,2) -- cycle;
            \end{tikzpicture}
        \]
        \subcaption{Diagramatic depiction of the module $\field_{\{(x, y) \mid x < i \text{ or } y < j\}}$ in $\mbC_{\e,\e}$} \label{subfig.C_e_e}
    \end{subfigure}
    \caption{Certain special modules}
\end{figure}

Spelled out, this means that the representations in $\rep_\field(\vec A_m \otimes \vec A_n)$ that are surjective in one direction correspond (up to a finitely classified, completely explicit list of direct summands) to the representations in $\rep_\field(\vec A_m \otimes \vec A_{n-1})$.

We will also see, in \cref{cor.commutes_with_tau}, that this correspondence preserves the Auslander--Reiten structure.

As an immediate application we can combine \cref{thm:concrete} with \cref{thm.types_for_grids} and obtain the following classification.

\begin{cor}
    The category $\rep_{\field}^{\e, *} (\vec A_m \otimes \vec A_n)$ contains finitely many indecomposables precisely in the cases
    \begin{itemize}
        \item $n \leq 2$ or $m = 1$;
        \item $(m,n) \in \{ (2,3), (2,4), (2,5), (3,3), (4,3)\}$.
    \end{itemize}
    It is of tame representation type precisely in the cases
    \begin{itemize}
        \item $(m,n) \in \{ (2,6), (3,4), (5,3)\}$.
    \end{itemize}
    In all other cases it is of wild representation type.
\end{cor}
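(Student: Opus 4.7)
The plan is to combine \cref{thm:concrete} with \cref{thm.types_for_grids}, with an appropriate shift in the second parameter.  By \cref{thm:concrete}, the functor $\tors$ realizes $\rep_{\field}(\vec A_m \otimes \vec A_{n-1})$ as the Serre quotient of $\rep_{\field}^{\e,*}(\vec A_m \otimes \vec A_n)$ by $\rep_{\field}^{\e,\m}(\vec A_m \otimes \vec A_n)$, and the latter has only finitely many indecomposables (explicitly the $mn$ thin modules $\field_{\{1,\ldots,i\}\times\{j,\ldots,n\}}$).  Thus $\rep_{\field}^{\e,*}(\vec A_m \otimes \vec A_n)$ has the same representation type as $\rep_{\field}(\vec A_m \otimes \vec A_{n-1})$, up to an explicit finite adjustment.

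Concretely, I would argue that every indecomposable in $\rep_{\field}^{\e,*}(\vec A_m \otimes \vec A_n)$ either lies in the finite list $\rep_{\field}^{\e,\m}(\vec A_m \otimes \vec A_n)$ or is sent by $\tors$ to a nonzero indecomposable of $\rep_{\field}(\vec A_m \otimes \vec A_{n-1})$, and that conversely indecomposables and one- or two-parameter families of indecomposables in $\rep_{\field}(\vec A_m \otimes \vec A_{n-1})$ lift through $\tors$ to the same in $\rep_{\field}^{\e,*}(\vec A_m \otimes \vec A_n)$.  This gives: finite type is preserved in both directions because adding or removing finitely many indecomposables cannot change this property; tame type is preserved because the controlled families on either side match up via $\tors$ and its section; wild type is preserved because any representation embedding of the free algebra on two generators can likewise be transported.

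Next I would substitute $n \mapsto n-1$ in \cref{thm.types_for_grids}.  The finite-type cases $m=1$, $n-1=1$, $(m,n-1)\in\{(2,2),(2,3),(2,4),(3,2),(4,2)\}$ become $m=1$, $n=2$, $(m,n)\in\{(2,3),(2,4),(2,5),(3,3),(4,3)\}$; and the tame cases $(m,n-1)\in\{(2,5),(3,3),(5,2)\}$ become $(m,n)\in\{(2,6),(3,4),(5,3)\}$.  All remaining cases are wild.

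Finally I would dispose of the degenerate boundary case $n=1$ not covered by the shift: when $n=1$ there are no vertical arrows, so $\rep_{\field}^{\e,*}(\vec A_m \otimes \vec A_1)$ is the category of $\vec A_m$-representations with surjective arrows, which has only the $m$ indecomposables $\field_{[1,i]}$ and so is finite; together with the $n=2$ case above this gives the unified condition $n\leq 2$ in the statement.  The only real subtlety is the preservation of tame and wild type under the Serre quotient, but since the kernel subcategory has an explicit finite list of generators and the equivalence admits a concrete section, this step is routine rather than the main obstacle.
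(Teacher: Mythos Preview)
Your approach is exactly what the paper intends: it states the corollary as an immediate combination of \cref{thm:concrete} with \cref{thm.types_for_grids} and gives no further proof, so your spelling-out of the parameter shift and the boundary case $n=1$ is fine.

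One correction, however: the quotient in \cref{thm:concrete} is an \emph{additive} (ideal) quotient, not a Serre quotient --- morphisms are identified when their difference factors through an object of $\rep_{\field}^{\e,\m}(\vec A_m \otimes \vec A_n)$. This distinction matters for the step you rely on. In an additive quotient of a Krull--Schmidt category by the additive closure of finitely many indecomposables, the nonzero indecomposables are precisely the indecomposables of the ambient category not on that finite list (their endomorphism rings remain local after passing to the quotient), which is exactly the bijection you invoke to transfer representation type. For a Serre quotient the passage of indecomposables is far less transparent, and in any case $\rep_{\field}^{\e,*}(\vec A_m \otimes \vec A_n)$ is not abelian, so a Serre quotient is not even available here. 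With that terminology fixed, your argument for the preservation of finite, tame, and wild type goes through.
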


In fact, for $n \leq 2$, all the indecomposables are constant modules on connected subsets. In the context of clustering above, this means that  $F_\field\circ \mathcal{C}(M,d)$ can be understood from a collection of simple regions if and only if the filtration of $M$ is trivial or essentially a two-step filtration $M_{\delta_1} \subseteq M_{\delta_2}$.

\medskip

For the categories of the form $\rep^{\e,\e}(\vec A_m \otimes \vec A_n)$ we obtain the following variant of \cref{thm:concrete}.

\begin{thm}[Dual of \cref{cor.mono_mono}]
    There is an equivalence
    \[
        \frac{\rep_\field^{\e,\e}(\vec A_m \otimes \vec A_n)}{\mbC_{\e,\e}} \simeq \rep_{\field} (\vec A_{m-1} \otimes \vec A_{n-1}),
    \]
    where $\mbC_{\e,\e}$ is the subcategory whose indecomposables are of the form  $\field_{\{(x, y) \mid x < i \text{ or } y < j\}}$ as depicted in \cref{subfig.C_e_e}.
\end{thm}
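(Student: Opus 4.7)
The plan is to prove the statement by applying \cref{thm:concrete} twice, in orthogonal directions, and composing the resulting reductions. The evident self-duality $\rep_\field(\vec A_m \otimes \vec A_n)^{\op} \simeq \rep_\field(\vec A_m \otimes \vec A_n)$, obtained by reversing all arrows and relabeling indices by $(i,j) \mapsto (m+1-i, n+1-j)$, exchanges monomorphisms with epimorphisms and identifies the L-shape supports of \cref{subfig.C_e_e} with the dual shapes appearing in \cref{cor.mono_mono}; hence it suffices to prove either the $\e,\e$ or the $\m,\m$ version. I sketch the plan in the $\e,\e$ setting, where \cref{const:intro} directly applies.

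First, restrict $\tors$ to $\rep_\field^{\e,\e}(\vec A_m \otimes \vec A_n)$ and verify that the image lies in $\rep_\field^{\e,\e}(\vec A_m \otimes \vec A_{n-1})$. Preservation of vertical epis is a direct lifting: given $x' \in \ker[X_{i,j+1} \to X_{i,n}]$, lift along the vertical epi $X_{i,j} \twoheadrightarrow X_{i,j+1}$ to some $x \in X_{i,j}$, which lies in $\ker[X_{i,j} \to X_{i,n}]$ by commutativity. Preservation of horizontal epis follows from the snake lemma applied to
\[
    0 \shortto \tors(X)_{i,j} \shortto X_{i,j} \shortto X_{i,n} \shortto 0
\]
for consecutive values of $i$: since both right-hand vertical maps $X_{i,j} \to X_{i+1,j}$ and $X_{i,n} \to X_{i+1,n}$ are epis, the induced map on kernels is an epi as well. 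Second, by the symmetry swapping the two factors of $\vec A_m \otimes \vec A_n$, there is a transposed construction $\tors'(Y)_{i,j} := \ker[Y_{i,j} \to Y_{m,j}]$ which, via the transposed analog of \cref{thm:concrete}, induces a functor $\rep_\field^{*,\e}(\vec A_m \otimes \vec A_k) \to \rep_\field(\vec A_{m-1} \otimes \vec A_k)$ with kernel the additive closure of the transposed rectangular modules $\field_{\{i,\ldots,m\} \times \{1,\ldots,j\}}$. Applying $\tors'$ to the output of the first step yields
\[
    \Phi := \tors' \circ \tors \colon \rep_\field^{\e,\e}(\vec A_m \otimes \vec A_n) \to \rep_\field(\vec A_{m-1} \otimes \vec A_{n-1}),
    \qquad
    \Phi(X)_{i,j} = \ker\bigl[\,X_{i,j} \to X_{i,n} \oplus X_{m,j}\,\bigr].
\]

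I would then verify that $\Phi$ vanishes on each L-shape module $\field_{\{(x,y) \mid x < i_0 \text{ or } y < j_0\}}$ by a pointwise check: at any $(i,j)$ in the support, at least one of $X_{i,n}$ and $X_{m,j}$ remains $\field$, so the diagonal $X_{i,j} \to X_{i,n} \oplus X_{m,j}$ is injective. Essential surjectivity of the induced functor on $\rep_\field^{\e,\e}/\mbC_{\e,\e}$ follows by lifting a representation $Z \in \rep_\field(\vec A_{m-1} \otimes \vec A_{n-1})$ twice: first via the transposed \cref{thm:concrete} to an object of $\rep_\field^{\e,\e}(\vec A_m \otimes \vec A_{n-1})$, then via \cref{thm:concrete} itself to an object of $\rep_\field^{\e,\e}(\vec A_m \otimes \vec A_n)$. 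Fullness and faithfulness on the quotient then descend from the corresponding properties of each factor in the composition.

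The main obstacle is the identification $\ker \Phi = \add \mbC_{\e,\e}$: a priori the composite $\tors' \circ \tors$ annihilates anything sent to zero by either factor individually, and one must show that the combined kernel collapses exactly onto the L-shapes rather than a strictly larger subcategory. The cleanest resolution, following the strategy announced in the abstract, is to realise $\mbC_{\e,\e}$ as the additive closure of a tilting subcategory of $\rep_\field(\vec A_m \otimes \vec A_n)$; the induced cotorsion-torsion triple then yields the quotient equivalence directly, with $\rep_\field(\vec A_{m-1} \otimes \vec A_{n-1})$ recovered as the module category over the endomorphism algebra of this tilting object.
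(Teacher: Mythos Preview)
Your two-step strategy has a genuine error in the snake-lemma step. You claim that if $X\in\rep_\field^{\e,\e}(\vec A_m\otimes\vec A_n)$ then $\tors X$ again has horizontal epimorphisms, arguing via the snake lemma applied to
\[
0 \to (\tors X)_{i,j} \to X_{i,j} \to X_{i,n} \to 0
\]
for consecutive $i$. But the snake lemma does \emph{not} say that epimorphisms in the middle and right columns force an epimorphism on the left; with $\beta,\gamma$ epi one only gets that the connecting map $\ker\gamma\to\coker\alpha$ is surjective, and $\ker\gamma$ need not vanish. A concrete counterexample already appears for $m=n=2$: take $X_{1,1}=X_{1,2}=X_{2,1}=\field$, $X_{2,2}=0$, with all nonzero structure maps the identity. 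Then $X\in\rep_\field^{\e,\e}(\vec A_2\otimes\vec A_2)$, but $(\tors X)_{1,1}=\ker[\field\xrightarrow{\id}\field]=0$ while $(\tors X)_{2,1}=\ker[\field\to 0]=\field$, so the horizontal map of $\tors X$ is $0\to\field$, which is not epi. Hence $\tors$ does not restrict to a functor into $\rep_\field^{\e,\e}(\vec A_m\otimes\vec A_{n-1})$, and the composition $\tors'\circ\tors$ cannot be analyzed as a composite of two instances of \cref{thm:concrete} on the $\e,\e$ subcategories.

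Consequently the ``main obstacle'' is not merely the identification of $\ker\Phi$; the entire reduction to two applications of \cref{thm:concrete} collapses. Your closing suggestion---to realize $\mbC_{\e,\e}$ as (co)tilting and invoke the cotorsion torsion machinery---is exactly what the paper does in the dual \cref{cor.mono_mono}: one checks directly that the L-shaped modules form a tilting subcategory, computes ${}^{\perp_1}\mbT=\rep_\field^{\m,\m}$ and $\mbT^\perp=\{M\mid M_{i,1}=0=M_{1,j}\}\simeq\rep_\field(\vec A_{m-1}\otimes\vec A_{n-1})$, and applies \cref{cor:equiv_ctt}. That argument is self-contained and does not pass through \cref{thm:concrete} at all.
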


Although the representation types of both $\rep^{\e,*}(\vec A_m \otimes \vec A_n)$ and $\rep^{\e,\e}(\vec A_m \otimes \vec A_n)$ are wild already for relatively small values of $m$ and $n$, it is not clear what indecomposables appear as summands of, say,  $F_\field\circ\mathcal{S}(M,d)$ for some metric space $(M,d)$ whose filtration is given by a ``density'' function $\sigma\colon M\to [0, \infty)$. Perhaps more interestingly, which indecomposables do we see in relevant data sets? These are questions which hopefully will be answered in future work.

\subsection{Torsion and cotorsion pairs}

While a direct approach to the above theorem is possible, we chose here to prove it by considering the concepts of torsion and cotorsion pairs. The advantage of this approach is two-fold: Firstly, it is easier to get a feeling for why the proof should work, rather than just a technical verification. Secondly, the result in this language is actually much more general than the above application, and is therefore of independent interest both from a purely representation theoretic point of view and with respect to applicability to other specific instances.

For a formal definition of torsion and cotorsion pairs we refer to \cref{sec:cotorsiontorsion}. For this introduction, the main point is that both torsion and cotorsion pairs are a way of ``orthogonally'' decomposing an abelian category into two parts, where ``orthogonal'' refers to $\Hom$ vanishing for torsion and to $\Ext^1$-vanishing for cotorsion pairs. If a subcategory appears in both a torsion and a cotorsion pair, it is natural to wonder if its two complements are related. The most abstract version of the main result gives a positive answer to this suspicion:

\begin{thm}[See \cref{thm:tct}]
    \label{thm.triple}
    Let $\mcT, \mcF, \mcD$ be three subcategories of an abelian category, such that $( \mcT, \mcF)$ is a torsion pair and $( \mcF, \mcD )$ is a cotorsion pair. Then
    \[
        \mcT \simeq \frac{\mcD}{\mcF \cap \mcD}.
    \]
\end{thm}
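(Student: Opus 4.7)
The plan is to build the equivalence explicitly using the torsion radical. Let $t \colon \mcA \to \mcT$ denote the right adjoint to the inclusion $\mcT \hookrightarrow \mcA$, so that every object $X$ fits in a functorial short exact sequence $0 \to tX \to X \to fX \to 0$ with $tX \in \mcT$ and $fX \in \mcF$. I propose to consider the restriction $\Phi := t|_\mcD \colon \mcD \to \mcT$. The first observation is that $\Phi$ kills every object of $\mcF \cap \mcD$, since $tE = 0$ whenever $E \in \mcF$; consequently $\Phi$ descends to a functor $\overline\Phi \colon \mcD/(\mcF \cap \mcD) \to \mcT$. The theorem will follow once $\overline\Phi$ is shown to be essentially surjective and fully faithful.

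For essential surjectivity, given $X \in \mcT$ I would use completeness of the cotorsion pair $(\mcF, \mcD)$ to produce a special $\mcD$-preenvelope
\[
0 \to X \to D \to F \to 0, \qquad D \in \mcD,\ F \in \mcF.
\]
Then $X \subseteq tD$ because $X \in \mcT$ embeds in $D$, while $tD \subseteq X$ because the image of $tD$ in $D/X \cong F$ lies in $\mcT \cap \mcF = 0$. Hence $\Phi(D) = tD \cong X$.

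For fullness, let $D, D' \in \mcD$ and consider the exact sequence $0 \to tD \to D \to fD \to 0$. Applying $\Hom_\mcA(-, D')$ gives an exact sequence whose connecting term is $\Ext^1_\mcA(fD, D')$, which vanishes because $fD \in \mcF$ and $D' \in \mcD$. Thus every morphism $tD \to tD' \hookrightarrow D'$ lifts to a morphism $D \to D'$, and restriction then recovers the original map $tD \to tD'$. For faithfulness, a morphism $\phi \colon D \to D'$ restricts to zero on $tD$ if and only if it factors through $fD$. The ``only if'' direction is immediate; for the other direction I would argue that any morphism $D \to E$ with $E \in \mcF$ kills $tD$ by the torsion axiom. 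It remains to identify ``factoring through $fD$'' with ``factoring through an object of $\mcF \cap \mcD$''. Any factorization through $E \in \mcF \cap \mcD$ a fortiori factors through $fD$ by the previous observation. Conversely, given $\psi \colon fD \to D'$, take the special $\mcD$-preenvelope of $fD$: since both $fD$ and the cokernel lie in $\mcF$ and $\mcF$ is extension-closed, the middle term is an object $E \in \mcF \cap \mcD$, and the Ext vanishing $\Ext^1(\mcF, \mcD) = 0$ lets us extend $\psi$ to $E \to D'$, producing the required factorization $D \to E \to D'$.

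The main obstacle I anticipate is bookkeeping around the completeness of the cotorsion pair: one needs the special preenvelope to exist both for objects of $\mcT$ (used for essential surjectivity) and for quotients of the form $fD$ with $D \in \mcD$ (used for faithfulness). Provided the ambient cotorsion pair is assumed complete in the sense recalled in \cref{sec:cotorsiontorsion}, all pieces assemble into the stated equivalence; otherwise one has to verify separately that the relevant approximations exist, for instance by exploiting extra structure of the particular torsion and cotorsion pairs in $\rep_\field(\vec A_m \otimes \vec A_n)$.
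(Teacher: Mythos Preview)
Your proof is correct and uses essentially the same ingredients as the paper: the torsion radical $\tors$ restricted to $\mcD$ as the functor, the special $\mcD$-preenvelope from the cotorsion pair for essential surjectivity, and the vanishing $\Ext^1(\mcF,\mcD)=0$ together with the factorization of maps $\mcF \to \mcD$ through $\mcF\cap\mcD$ for full faithfulness. The only organizational difference is that the paper packages the preenvelope construction as an explicit quasi-inverse functor $\widetilde\ctf \colon \mcT \to \mcD/(\mcF\cap\mcD)$ (well-definedness being the dual of \cref{lemma:cotors_functors_cd}) and checks the two composites are identities, whereas you verify essential surjectivity and full faithfulness directly; your faithfulness step in effect reproves the first assertion of that lemma.
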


From this general result, we will deduce \cref{thm:concrete} by establishing that the triple of subcategories of $\rep_{\field}(\vec A_m \otimes \vec A_n)$ given by
\[
    \big(\rep_{\field}(\vec A_m \otimes \vec A_{n-1}), \rep_{\field}^{*, \m}(\vec A_m \otimes \vec A_n), \rep_{\field}^{\e, *}(\vec A_m \otimes \vec A_n)\big)
\]
satisfies the conditions of \cref{thm.triple}.

\begin{remark}
    \label{rem:related_results}
    While the dual statements \cref{thm.ctt,thm:tct}, to the best of our knowledge, do no appear in the current literature in this form, such equivalences obtained from torsion and cotorsion pairs have been known to exist in various settings.
    In particular, \cite[Proposition V.5.2]{MR2327478} seems to be a special case of our theorem, with a number of additional technical conditions.
    Moreover, after we made a first preprint of the present paper publicly available, Apostolos Beligiannis informed us about an extensive manuscript \cite{Beligiannis2010Tilting}, which he had started in 2004 and announced at several conferences in the subsequent years but not yet made publicly available, and which we were not aware of at the time of writing.
    The principal aim of that manuscript was to develop a fully general tilting theory in an arbitrary abelian category.
    In that manuscript he obtained statements analogous to our \cref{thm.ctt,thm.ctt=tilting}, together with the dual versions \cref{thm:tct,thm.tct_from_tilting}, in a slightly more general setting (avoiding the assumption on enough projective or injective objects).
    Our results in \cref{sec:applicationquivers} demonstrate the usefulness of such a general tilting theory.
\end{remark}

\section{Preliminary results on (co)torsion pairs and tilting theory}
\label{sec:bg}
In this section we establish the general theory that will later be applied to examples coming from multi-parameter persistent homology.  This includes recalling known results from tilting theory and the theory of (co)torsion pairs while illustrating the results using an example from ordinary (one-parameter) persistent homology.

Let $(X,\leq)$ be a poset and $\field$ a field.  By considering $(X,\leq)$ as a category, we form the abelian category of pointwise finite dimensional (covariant) representations, $\Rep_\field^\mathrm{pfd} X$, consisting of functors
\[
    M\colon X \to \modcat\field
\]
into the category of finite dimensional vector spaces and natural transformations.  A subset $C$ of $X$ is \emph{convex} if for every pair $x,y\in C$, $x\leq z\leq y$ implies $z\in C$.  For a convex subset $C\subseteq X$, the \emph{constant representation} $\field_C$ is defined as
\[
    \field_C(x) = \begin{cases} \field,& x \in C \\ 0,& x \notin C \end{cases}
\]
and $\field_C(x\leq y) = \id_\field$ whenever $x,y\in C$.  Putting $P_x = \field_{\{ y \in X \mid y \geq x \}}$, we observe the following standard fact: For $x\in X$ and $M \in \Rep_\field^\mathrm{pfd} X$ there is an isomorphism
\[
    \Hom_{\Rep X}(P_x, M) \isom M(x),
\]
natural in both $x$ and $M$.  This is an application of the Yoneda lemma; a natural transformation $\eta\colon P_x \to M$ is mapped to $\eta_x(1) \in M(x)$, and an element $m \in M(x)$ is mapped to the uniquely determined natural transformation sending $1\in P_x(x) = \field$ to $m\in M(x)$.  In particular, $\Hom_{\Rep X}(P_x, -)$ is right exact, so $P_x$ is a projective object of $\Rep_\field^\mathrm{pfd} X$.

Based on this, a representation $M \in \Rep_\field^\mathrm{pfd} X$ is \emph{finitely generated} if there exists a finite indexing set $I$, a set of elements $(x_i\in X)_{i\in I}$ and an epimorphism of functors $\bigoplus_{i\in I} P_{x_i} \onto M$.  It is \emph{finitely presented} if, additionally, the kernel of every such epimorphism is again finitely generated.  The full subcategory of finitely presented representations is denoted $\Rep_\field^\mathrm{fp} X \subseteq \Rep_\field^\mathrm{pfd} X$.

The following will be our running example.
\begin{ex}
    \label{ex:repsofR}
    Fix a field $\field$ and let $\mbR_{\geq0}$ be the totally ordered set of non-negative real numbers considered as a category.  For each $x\geq 0$, the projective $P_x$ in $\Rep_\field^\mathrm{pfd} \mbR_{\geq0}$ is concretely given as $\field_{\hopen x,\infty}$.

    The indecomposable objects in this category are completely classified by the constant representations $\field_I$, where $I$ is an (open, half-open or closed) interval; see \cite[Theorem~1.1]{MR3323327}.

    The abelian subcategory $\Rep_\field^\mathrm{fp} \mbR_{\geq0}$ of finitely presented representations therefore has as indecomposable objects the $\field_{\hopen x,y}$, where we allow $y = \infty$, as they have presentations
    \begin{equation} \label{eq:presentation}
        P_y \into P_x \onto \field_{\hopen x,y}.
    \end{equation}
    Conversely, a constant representation $\field_I$ is finitely presented only if the interval $I$ is half-open of the form $\hopen x,y$.
\end{ex}

\subsection{Cotorsion torsion triples} \label{sec:cotorsiontorsion}

\begin{defn}
    \label{def:tp}
    Let $\mcA$ be an abelian category. A \emph{torsion pair} is a pair $(\mcT, \mcF)$ of full subcategories of $\mcA$, the \emph{torsion} and \emph{torsion-free} parts, respectively, satisfying
    \begin{enumerate}
        \item $\Hom_{\mcA}(\mcT, \mcF) = 0$, and
        \item for any object $A \in \mcA$ there is a short exact sequence $\tors A \into A \onto \tfree A$ with $\tors A \in \mcT$ and $\tfree A \in \mcF$.
    \end{enumerate}
\end{defn}

\begin{remark}
    \label{rmk:tp}
    The subcategories in a torsion pair determine each other, namely
    \begin{align*}
        \mcF = \mcT^\perp &:= \{ A \in \mcA \mid \Hom_\mcA(\mcT,A) = 0 \} \\
        \mcT = {}^\perp\mcF &:= \{ A \in \mcA \mid \Hom_\mcA(A,\mcF) = 0 \}.
    \end{align*}
    This is seen by considering the short exact sequence associated to $A$: If $A \in \mcT^\perp$, then $A \onto \tfree A$ necessarily becomes an isomorphism.

    The subcategory $\mcT$ is extension-closed, as can be seen by applying the left-exact functor $\Hom_\mcA(-,F)$ with $F \in \mcF$ to any short exact sequence $T \into X \onto T'$ with $T,T' \in \mcT$.  Similarly, $\mcF$ is extension-closed as well.  Moreover, left exactness of the $\Hom$-functors also implies that $\mcT$ is closed under factors, while $\mcF$ is closed under subobjects.

    The short exact sequence in the definition of torsion pair is functorial.  More precisely, for each object of $\mcA$, fix a short exact sequence as in the definition.  For any morphism $g\colon A\to A'$ the composite $\tors A \into A \stackrel g\to A' \onto \tfree A'$ is zero, since $\Hom_\mcA(\mcT,\mcF) = 0$.  By universality of kernels and cokernels, we can complete the following diagram uniquely.
    \[
        \begin{tikzpicture}[scale=.75]
            \node (1) at (0,2) {$\tors A$};
            \node (2) at (3,2) {$A$};
            \node (3) at (6,2) {$\tfree A$};
            \node (4) at (0,0) {$\tors A'$};
            \node (5) at (3,0) {$A'$};
            \node (6) at (6,0) {$\tfree A'$};
            \path[->,font=\scriptsize]
                (1) edge[right hook->] node[auto] {} (2)
                (2) edge[->>] node[auto] {} (3)
                (4) edge[right hook->] node[auto] {} (5)
                (5) edge[->>] node[auto] {} (6)
                (2) edge node[auto] {$g$} (5)
                (1) edge[densely dashed] node[auto] {$\tors g$} (4)
                (3) edge[densely dashed] node[auto] {$\tfree g$} (6);
        \end{tikzpicture}
    \]
    This defines the functors $\tors\colon \mcA \to \mcT$ and $\tfree\colon \mcA \to \mcF$.

    Applying $\Hom_\mcA(-,F)$, where $F\in\mcF$, to $\tors A \into A \onto \tfree A$ yields an isomorphism $\Hom_\mcA(A,F) \cong \Hom_\mcA(\tfree A,F)$, using left-exactness and $\Hom_\mcA(\tors A,F)=0$.  Thus $\tfree$ is left adjoint to the inclusion $\mcF \into \mcA$, and similarly $\tors$ is right adjoint to $\mcT \into \mcA$.
\end{remark}

\begin{ex}
    \label{ex:first_tp}
    Consider $\mcA = \Rep_\field^{\mathrm{fp}} \mbR_{\geq 0}$, and put
    \begin{align*}
        \mcT &= \add \big(\{ \field_{\hopen 0,y} \mid 0 < y \leq \infty \} \cup \{ \field_{\hopen x,y} \mid 1 \leq x < y \leq \infty \}\big) \\
        \mcF &= \add \big(\{ \field_{\hopen x,y} \mid 0 < x < y \leq 1 \}\big).
    \end{align*}
    Then $(\mcT,\mcF)$ forms a torsion pair, as can be verified directly.  In \cref{ex.conc_torsion_from_tilting} we arrive at this conclusion by constructing the pair from a \emph{tilting} subcategory.

    The ``interesting'' indecomposables, i.e., the ones not lying in either $\mcT$ or $\mcF$ already, are the $\field_{\hopen x,y}$ with $0 < x < 1 < y \leq \infty$. For these, the functorial short exact sequence is given as
    \[
        \underbrace{\field_{\hopen 1,y}}_{\in \mcT} \into \field_{\hopen x,y} \onto \underbrace{\field_{\hopen x,1}}_{\in \mcF}.
    \]
\end{ex}

For $\mcX \subseteq \mcA$ define full subcategories by
\begin{align*}
    \mcX^{\perp_1} := \{ A \in \mcA \mid \Ext_\mcA^1(\mcX,A) = 0 \} \\
    {}^{\perp_1}\mcX := \{ A \in \mcA \mid \Ext_\mcA^1(A,\mcX) = 0 \}.
\end{align*}

\begin{defn}[{\cite{MR565595}, see also~\cite[\S V.3.3]{MR2327478}}]
    \label{def.cotorsion}
    A \emph{cotorsion pair} is a pair $(\mcC, \mcD)$ of full subcategories of an abelian category $\mcA$, the \emph{cotorsion} and \emph{cotorsion-free} parts, respectively, such that
    \begin{enumerate}
        \item $\mcC = {}^{\perp_1} \mcD \text{ and } \mcD = \mcC^{\perp_1}$,
    and
        \item for any object $A \in \mcA$ there are two short exact sequences
    \[
        \ctf A \into \ctt A \onto A \quad \text{and} \quad A \into \widetilde{\ctf} A \onto \widetilde{\ctt} A
    \]
    with $\ctt A$ and $\widetilde{\ctt} A$ in $\mcC$ and $\ctf A$ and $\widetilde{\ctf} A$ in $\mcD$.
    \end{enumerate}
\end{defn}

\begin{remark}
    Other sources define cotorsion pairs as pairs only satisfying the first condition, and rather call pairs additionally providing the two exact sequences \emph{complete}.  The existence of these short exact sequences will be crucial for our results, so we have chosen to follow Beligiannis--Reiten's convention.

    In contrast to the case with torsion pairs, the constructions $\ctt,\widetilde\ctt,\ctf,\widetilde\ctf$ are \emph{not} functorial.  They do, however, define functors to certain additive quotients; see \cref{lemma:cotors_functors_cd}.

    In the presence of condition (2), we may replace condition (1) by the following
    \begin{enumerate}
        \item[(1$'$)] $\Ext^1(\mcC, \mcD) = 0$ and $\mcC$ and $\mcD$ are closed under direct summands.
    \end{enumerate}
    To see this, consider a short exact sequence $\ctf A \into \ctt A \onto A$, where $A \in {}^{\perp_1}\mcD$.  Then this sequence splits and therefore $A$ is a summand in $\ctt A$.  Since $\mcC$ is closed under summands, this implies $A \in \mcC$.  Analogously, we also obtain the second inclusion $\mcC^{\perp_1} \subseteq \mcD$.

    Finally, we note that the condition $\mcC = {}^{\perp_1}\mcD$ readily implies that $\mcC$ is closed under taking extensions, by an argument similar to that of \cref{rmk:tp}.  Furthermore, $\mcC$ must contain every projective object in $\mcA$.  Likewise, $\mcD$ is extension-closed and contains every injective in $\mcA$.
\end{remark}

\begin{ex}
    \label{ex.first_ctp}
    Consider $\mcA = \Rep_\field^{\mathrm{fp}} \mbR_{\geq 0}$, and put
    \begin{align*}
        \mcC &= \add \big(\{ \field_{\hopen x,\infty} \mid 0\leq x < \infty \} \cup \{ \field_{\hopen x,y} \mid 0 \leq x < y < 1 \}\big) \\
        \mcD &= \add \big(\{ \field_{\hopen 0,y} \mid 0 < y \leq \infty \} \cup \{ \field_{\hopen x,y} \mid 1 \leq x < y \leq \infty \}\big).
    \end{align*}
    Now let $\field_{\hopen a,b}$ be an indecomposable in $\mcA$.  The first short exact sequence is given by
    \[
        \underbrace{\field_{\hopen b,\infty}}_{\in \mcD} \into \underbrace{\field_{\hopen a,\infty}}_{\in \mcC} \onto \field_{\hopen a,b}
    \]
    for $b \geq 1$. (If $b < 1$ then the indecomposable is already in $\mcC$.) The second short exact sequence is non-trivial for $\field_{\hopen a,b}$ with $a < 1$, and in that case it is given as
    \[
        \field_{\hopen a,b} \into \underbrace{\field_{\hopen 0,b}}_{\in \mcD} \onto \underbrace{\field_{\hopen 0,a}}_{\in \mcC}.
    \]
\end{ex}

The following definition introduces the main object of study in this section.
\begin{defn}
    A \emph{cotorsion torsion triple} is a triple of subcategories $(\mcC, \mcT, \mcF)$ in an abelian category such that $(\mcC, \mcT)$ is a cotorsion pair and $(\mcT, \mcF)$ is a torsion pair.
\end{defn}

\begin{ex}
    \label{ex:first_cttt}
    Comparing \cref{ex:first_tp,ex.first_ctp}, we thus obtain an example of a cotorsion torsion triple in $\mcA = \Rep_\field^\mathrm{fp} \mbR_{\geq0}$:
    \begin{align*}
        \mcC &= \add \big(\{ \field_{\hopen x,\infty} \mid 0\leq x < \infty \} \cup \{ \field_{\hopen x,y} \mid 0 \leq x < y < 1 \}\big) \\
        \mcT &= \add \big(\{ \field_{\hopen 0,y} \mid 0 < y \leq \infty \} \cup \{ \field_{\hopen x,y} \mid 1 \leq x < y \leq \infty \}\big) \\
        \mcF &= \add \big(\{ \field_{\hopen x,y} \mid 0 < x < y \leq 1 \}\big).
    \end{align*}
\end{ex}

In preparation for the next lemma, we recall the construction of additive quotients.  Let $\mcE$ be an additive category, and $\mcX \subseteq \mcE$ a full subcategory which is closed under finite direct sums.  Define the category $\mcE/\mcX$ as having the same objects as $\mcE$, and with
\[
    \Hom_{\mcE/\mcX}(E,E') := \Hom_\mcE(E,E')/\sim,
\]
where $f \sim g$ if and only if $f-g$ factors through an object of $\mcX$.  In particular, all objects of $\mcX$ become zero in $\mcE/\mcX$, and the canonical quotient functor $\mcE \to \mcE/\mcX$ enjoys the following universal property:  If $F\colon \mcE \to \mcE'$ is an additive functor such that $F(\mcX) = 0$, then there exists a unique additive functor $\bar F\colon \mcE/\mcX \to \mcE'$ such that
\[
    \begin{tikzpicture}[scale=.75]
        \node (1) at (0,2) {$\mcE$};
        \node (2) at (3,2) {$\mcE'$};
        \node (3) at (0,0) {$\mcE/\mcX$};
        \path[->,font=\scriptsize]
            (1) edge node[auto] {$F$} (2)
            (1) edge node[auto] {} (3)
            (3) edge[densely dashed] node[auto,swap] {$\exists! \bar F$} (2);
    \end{tikzpicture}
\]
commutes.

\begin{lemma}
    \label{lemma:cotors_functors_cd}
    Let $(\mcC, \mcD)$ be a cotorsion pair in $\mcA$.  Then any map from $\mcC$ to $\mcD$ factors through some object in $\mcC \cap \mcD$.

    Moreover, the constructions $\ctf, \widetilde\ctf$ and $\ctt, \widetilde\ctt$ define functors from $\mcA$ to $\frac{\mcD}{\mcC \cap \mcD}$ and $\frac{\mcC}{\mcC \cap \mcD}$, respectively.
\end{lemma}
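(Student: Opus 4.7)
The plan is to establish the factorization statement first, then use it to verify the functoriality claims. The factorization is the substantive content; functoriality is a formal consequence.

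For the factorization, let $f \colon C \to D$ with $C \in \mcC$ and $D \in \mcD$. I would apply the second short exact sequence from \cref{def.cotorsion} at $C$, namely $C \into \widetilde{\ctf} C \onto \widetilde{\ctt} C$, where $\widetilde{\ctf} C \in \mcD$ and $\widetilde{\ctt} C \in \mcC$. Because $\mcC$ is closed under extensions (noted after \cref{def.cotorsion}) and both $C$ and $\widetilde{\ctt} C$ belong to $\mcC$, the middle term $\widetilde{\ctf} C$ belongs to $\mcC \cap \mcD$. Applying $\Hom_{\mcA}(-, D)$ to this sequence, the obstruction $\Ext^1_{\mcA}(\widetilde{\ctt} C, D)$ vanishes (since $\widetilde{\ctt} C \in \mcC$ and $D \in \mcD$), so the restriction map $\Hom_{\mcA}(\widetilde{\ctf} C, D) \to \Hom_{\mcA}(C, D)$ is surjective, and any preimage of $f$ yields the desired factorization through an object of $\mcC \cap \mcD$.

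For the functoriality of $\ctt$ and $\ctf$, given $g \colon A \to A'$ I would lift the composite $\ctt A \onto A \xrightarrow{g} A'$ along $\ctt A' \onto A'$ to some $\tilde{g} \colon \ctt A \to \ctt A'$, using that $\Ext^1_{\mcA}(\ctt A, \ctf A') = 0$. This $\tilde{g}$ restricts to a map $\ctf A \to \ctf A'$ on kernels. The delicate point, where the first part of the lemma enters, is well-definedness in the quotients: any two lifts $\tilde{g}_1, \tilde{g}_2$ differ by a morphism whose composition with $\ctt A' \onto A'$ vanishes, so it factors as $\iota \circ h$ for some $h \colon \ctt A \to \ctf A'$. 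Since $h$ goes from $\mcC$ to $\mcD$, the first part produces a factorization through some $K \in \mcC \cap \mcD$, making $\tilde{g}_1 - \tilde{g}_2$ zero in $\mcC/(\mcC\cap \mcD)$. A short diagram chase, exploiting that $\iota \colon \ctf A' \into \ctt A'$ is monic, transfers the same conclusion to the restriction $\ctf A \to \ctf A'$ in $\mcD/(\mcC \cap \mcD)$. The functor axioms are then immediate, because the identities and the composites of chosen lifts serve as valid choices.

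The dual argument, starting from the sequence $A \into \widetilde{\ctf} A \onto \widetilde{\ctt} A$ and lifting in the opposite direction, yields the analogous statement for $\widetilde{\ctf}$ and $\widetilde{\ctt}$. The main obstacle throughout is the factorization claim of the first part; once it is in hand, everything else amounts to unwinding definitions and keeping track of which ambiguity lives in which quotient.
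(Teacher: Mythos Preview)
Your argument is correct. The functoriality part matches the paper's proof essentially verbatim: lift $g$ along the epimorphism using $\Ext^1(\ctt A,\ctf A')=0$, induce the map on kernels, and check well-definedness in the quotient by observing that the ambiguity is a map $\ctt A \to \ctf A'$ from $\mcC$ to $\mcD$, hence factors through $\mcC\cap\mcD$ by the first part.

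For the factorization claim itself you take the mirror image of the paper's route. The paper applies the \emph{first} approximation sequence to the target, $\ctf D \into \ctt D \onto D$, notes that $\ctt D \in \mcC\cap\mcD$ because $\mcD$ is extension-closed, and lifts $f$ along the epimorphism using $\Ext^1(C,\ctf D)=0$. You instead apply the \emph{second} approximation sequence to the source, $C \into \widetilde{\ctf} C \onto \widetilde{\ctt} C$, note that $\widetilde{\ctf} C \in \mcC\cap\mcD$ because $\mcC$ is extension-closed, and extend $f$ along the monomorphism using $\Ext^1(\widetilde{\ctt} C,D)=0$. The two arguments are formally dual and of identical difficulty; each buys exactly the same conclusion, and neither has any advantage over the other.
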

\begin{proof}
    For the first claim, let $f \colon C \to D$, with $C \in \mcC$ and $D \in \mcD$. Consider the short exact sequence $\ctf D \into \ctt D \onto D$. Since $\Ext_\mcA^1(C, \ctf D) = 0$, the map $\Hom_\mcA(C,\ctt D) \to \Hom_\mcA(C,D)$ is an epimorphism, so $f$ factors through $\ctt D$. But $\ctt D$ is in $\mcC$ by construction and in $\mcD$ since $\mcD$ is closed under extensions.

    For the second part, we only show that $\ctt$ and $\ctf$ are functors, the statement for $\widetilde\ctt$ and $\widetilde\ctf$ being dual.  Note that any map $g\colon A \to B$ in $\mcA$ may be lifted to a map of short exact sequences
    \[
        \begin{tikzpicture}[scale=.75]
            \node (dA) at (0,0) {$\ctf A$};
            \node (cA) at (3,0) {$\ctt A$};
            \node (A) at (6,0) {$A$};
            \node (dB) at (0,-2) {$\ctf B$};
            \node (cB) at (3,-2) {$\ctt B$};
            \node (B) at (6,-2) {$B$,};
            \path[->,font=\scriptsize]
                (dA) edge [right hook->] (cA)
                (cA) edge[->>] (A)
                (dB) edge [right hook->] (cB)
                (cB) edge [->>] (B)
                (dA) edge [densely dashed] node[auto] {$\hat g$} (dB)
                (cA) edge [densely dashed] node[auto] {$\bar g$} (cB)
                (A) edge node[auto] {$g$} (B);
        \end{tikzpicture}
    \]
    first to $\bar g\colon \ctt A \to \ctt B$ since $\Ext^1_\mcA(\ctt A,\ctf B) = 0$, and then to $\hat g\colon \ctf A \to \ctf B$ by the universal property of kernels.

    We need to check that the choice of lifts to $\ctt A \to \ctt B$ and $\ctf A \to \ctf B$ are unique in $\frac{\mcC}{\mcC \cap \mcD}$ and $\frac{\mcD}{\mcC \cap \mcD}$.  Equivalently, we show that any lifts of the zero map $A\to B$ are zero in $\frac{\mcC}{\mcC \cap \mcD}$ and $\frac{\mcD}{\mcC \cap \mcD}$.  Indeed, any choice of lift $\bar g$ of the zero map $A\to B$ factors through some $h\colon \ctt A\to \ctf B$; and $\hat g$ factors through this $h$ as well.  By the first part of this lemma, $h$, and thus $\bar g$ and $\hat g$, factor through an object of $\mcC \cap \mcD$.  Thus $\bar g$ is zero in $\frac{\mcC}{\mcC \cap \mcD}$ and $\hat g$ is zero in $\frac{\mcD}{\mcC \cap \mcD}$.
\end{proof}

The cotorsion pairs we study will usually come from a cotorsion torsion triple.  The following lemma guarantees that in this case, the cotorsion part consists of objects whose projective dimension is at most one.

\begin{lemma} \label{lem.pd=fac}
Let $(\mcC, \mcD)$ be a cotorsion pair in an abelian category $\mcA$. Then $\mcD$ is closed under factor modules if and only if all objects in $\mcC$ have projective dimension at most one.

In particular, if $(\mcC,\mcT,\mcF)$ is a cotorsion torsion triple, then all objects in $\mcC$ have projective dimension at most one.
\end{lemma}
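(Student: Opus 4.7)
The plan is to prove the biconditional by a pair of long-exact-sequence arguments, and then derive the ``in particular'' claim from the fact that torsion classes are closed under quotients. Throughout I assume $\mcA$ has enough injectives, which is automatic in the intended applications to $\rep_\field$ of finite grids.

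For the easy direction ($\Leftarrow$), suppose every $C \in \mcC$ has $\pdim C \leq 1$. Given $D \in \mcD$ and an arbitrary short exact sequence $K \into D \onto Q$, I apply $\Hom_\mcA(C,-)$ and extract the fragment $\Ext^1_\mcA(C,D) \to \Ext^1_\mcA(C,Q) \to \Ext^2_\mcA(C,K)$ of the resulting long exact sequence. The left term vanishes because $D \in \mcD = \mcC^{\perp_1}$, and the right term vanishes by the projective dimension assumption, so $\Ext^1_\mcA(C,Q) = 0$ for every $C \in \mcC$, forcing $Q \in \mcD$.

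For the harder direction ($\Rightarrow$), assume $\mcD$ is closed under quotients and pick $C \in \mcC$ and $M \in \mcA$; I must show $\Ext^2_\mcA(C,M) = 0$. The key ingredient is that every injective object of $\mcA$ belongs to $\mcD$, as already noted after \cref{def.cotorsion}. Embedding $M \into I$ in an injective and setting $Q := I/M$, the hypothesis gives $Q \in \mcD$ (as a quotient of $I \in \mcD$), hence $\Ext^1_\mcA(C,Q) = 0$. The long exact sequence of $M \into I \onto Q$ then sandwiches $\Ext^2_\mcA(C,M)$ between $\Ext^1_\mcA(C,Q) = 0$ and $\Ext^2_\mcA(C,I) = 0$, which yields the claim. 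The one subtlety I would need to confront in an abelian category without enough injectives is a suitable substitute for the injective embedding; the natural candidate is the second cotorsion sequence $M \into \widetilde\ctf M \onto \widetilde\ctt M$, in which $\widetilde\ctt M$ belongs to both $\mcC$ and (by quotient closure) to $\mcD$, and this is the one place where any real work would need to be done.

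The ``in particular'' part is then immediate: the torsion class $\mcT$ of a torsion pair is closed under quotients by \cref{rmk:tp}, so the equivalence just proved, applied to the cotorsion pair $(\mcC,\mcT)$ with $\mcT$ in the role of $\mcD$, yields $\pdim C \leq 1$ for every $C \in \mcC$.
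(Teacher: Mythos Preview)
Your $(\Leftarrow)$ direction and the ``in particular'' clause match the paper essentially verbatim.

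For $(\Rightarrow)$, your argument is correct \emph{under the extra hypothesis that $\mcA$ has enough injectives}, but the lemma is stated (and used) for an arbitrary abelian category, so as written there is a gap. The paper closes it by working directly with Yoneda $\Ext^2$: given a $2$-extension $A \into E \to F \onto C$, it applies the cotorsion approximation to the \emph{middle term} $E$, embedding $E \into \widetilde\ctf E$ with $\widetilde\ctf E \in \mcD$, and pushes the $2$-extension out along this map. The image of $\widetilde\ctf E \to F'$ in the pushed-out sequence lies in $\mcD$ (closure under quotients), so the $1$-extension from $C$ to that image splits, making the original $2$-extension trivial. Your instinct to replace the injective embedding by the sequence $M \into \widetilde\ctf M \onto \widetilde\ctt M$ is close but aimed at the wrong object: applied to $M$ it leaves you needing $\Ext^2_\mcA(C,\widetilde\ctf M)=0$, which you do not know; applied to $E$ inside a concrete $2$-extension, as the paper does, it finishes the job without any dimension-shift.
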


\begin{proof}
    Assume first that $\mcD$ is closed under factor modules. Let $C$ in $\mcC$, and consider any $2$-extension from $C$ to any object $A \in \mcA$, as in the upper line of the following diagram. By the second property of cotorsion pairs there is a monomorphism $E \into \widetilde \ctf E$ with $\widetilde \ctf E \in \mcD$. We construct the lower half of the diagram as the pushout along this map.
    \[
        \begin{tikzpicture}[scale=.6]
            \node (1) at (0,2) {$A$};
            \node (2) at (3,2) {$E$};
            \node (3) at (6,2) {$F$};
            \node (4) at (9,2) {$C$};
            \node (5) at (0,0) {$A$};
            \node (6) at (3,0) {$\widetilde\ctf E$};
            \node (7) at (6,0) {$F'$};
            \node (8) at (9,0) {$C$};
            \tikzpo27
            \path[->,font=\scriptsize]
                (1) edge[right hook->] node[auto] {} (2)
                (2) edge node[auto] {} (3)
                (3) edge[->>] node[auto] {} (4)
                (5) edge[right hook->] node[auto] {} (6)
                (6) edge node[auto] {} (7)
                (7) edge[->>] node[auto] {} (8)
                (1) edge[-, double distance=.5mm] node[auto] {} (5)
                (2) edge[right hook->] node[auto] {} (6)
                (3) edge[right hook->] node[auto] {} (7)
                (4) edge[-, double distance=.5mm] node[auto] {} (8);
        \end{tikzpicture}
    \]
    The image of the map $\widetilde \ctf E \to F'$ lies in $\mcD$, since $\mcD$ is closed under factor modules.
    Thus, by the first property of cotorsion pairs, the $1$-extension from $C$ to this image splits at $F'$, whence our original $2$-extension is trivial.
    Thus we have shown that $\Ext_{\mcA}^2(C, -) = 0$, that is, the projective dimension of $C$ is at most one.

    Assume now that all objects $C$ in $\mcC$ have projective dimension at most one. It follows that $\Ext_{\mcA}^1(C, -)$ is right exact, and thus that $\mcD$ is closed under quotients.

    Finally, assume $(\mcC,\mcT,\mcF)$ is a cotorsion torsion triple.  Since $(\mcT,\mcF)$ is a torsion pair, $\mcT$ is automatically closed under factors.  It follows from the first part that all objects of $\mcC$ has projective dimension at most one.
\end{proof}

The following result (also obtained in \cite{Beligiannis2010Tilting}, see \cref{rem:related_results}) will be of central significance in the sequel, providing the foundation for the main equivalences established in our work.
\begin{thm}
    \label{thm.ctt}
    Let $(\mcC, \mcT, \mcF)$ be a cotorsion torsion triple in an abelian category~$\mcA$. Then $\ctt\colon \mcA \to \frac{\mcC}{\mcC\cap\mcT}$ and $\tfree\colon \mcA\to\mcF$ induce mutually inverse equivalences
    \[
        \mcF \simeq \frac{\mcC}{\mcC \cap \mcT}.
    \]
\end{thm}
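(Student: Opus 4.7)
The plan is to construct a functor $\bar\tfree\colon \mcC/(\mcC\cap\mcT) \to \mcF$ that will serve as a quasi-inverse to the restriction of $\ctt$ to $\mcF$. By \cref{lemma:cotors_functors_cd}, the functor $\ctt$ already lands in $\mcC/(\mcC\cap\mcT)$. For $\tfree$, note that every object $T\in\mcT$ satisfies $\tfree T = 0$, so in particular $\tfree$ vanishes on $\mcC\cap\mcT \subseteq \mcT$; the universal property of the additive quotient then produces the desired $\bar\tfree$.

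To verify $\bar\tfree \circ \ctt \cong \id_\mcF$, fix $F\in\mcF$ and consider the cotorsion short exact sequence $\ctf F \into \ctt F \onto F$, in which $\ctf F \in \mcT$. Since $\tfree$ is left adjoint to the inclusion $\mcF \into \mcA$, it is right exact, yielding an exact sequence $\tfree(\ctf F) \to \tfree(\ctt F) \to \tfree F \to 0$ in which the outer terms are $0$ and $F$ respectively. Hence $\tfree(\ctt F) \cong F$; in particular, $\bar\tfree$ is essentially surjective.

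The main step is to show $\bar\tfree$ is fully faithful. Given $C, C' \in \mcC$, the adjunction $\tfree \dashv \mathrm{incl}$ gives $\Hom_\mcF(\tfree C, \tfree C') \cong \Hom_\mcA(C, \tfree C')$. Applying $\Hom_\mcA(C,-)$ to $\tors C' \into C' \onto \tfree C'$ and using $\Ext^1_\mcA(C, \tors C') = 0$ (from the cotorsion pair $(\mcC,\mcT)$) yields the short exact sequence
\[
    0 \to \Hom_\mcA(C,\tors C') \to \Hom_\mcA(C, C') \to \Hom_\mcA(C, \tfree C') \to 0.
\]
Thus $\Hom_\mcF(\tfree C,\tfree C')$ is $\Hom_\mcA(C, C')$ modulo maps factoring through $\tors C'$, equivalently (by the adjunction $\mathrm{incl} \dashv \tors$) maps factoring through some object of $\mcT$. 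The main obstacle, and the crux of the proof, is upgrading ``factors through some $T\in\mcT$'' to ``factors through some object of $\mcC\cap\mcT$.'' Given a factorization $C \xrightarrow{a} T \xrightarrow{b} C'$ with $T\in\mcT$, the cotorsion short exact sequence $\ctf T \into \ctt T \onto T$ combined with $\Ext^1_\mcA(C,\ctf T) = 0$ lifts $a$ to $\tilde a\colon C \to \ctt T$, so the original map factors through $\ctt T$. But $\ctt T \in \mcC$ by construction, and $\ctt T \in \mcT$ since $\mcT$ is extension-closed, whence $\ctt T \in \mcC\cap\mcT$. This matches the two quotient relations, and combined with essential surjectivity shows $\bar\tfree$ is an equivalence; the earlier computation then identifies $\ctt|_\mcF$ as its quasi-inverse.
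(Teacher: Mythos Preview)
Your proof is correct and takes a somewhat different route from the paper's. The paper verifies both compositions $\bar\tfree \circ \ctt|_\mcF$ and $\ctt|_\mcF \circ \bar\tfree$ directly: for the second, it observes that for $C \in \mcC$ the torsion sequence $\tors C \into C \onto \tfree C$ already serves as an approximation sequence of the form $\ctf(\tfree C) \into \ctt(\tfree C) \onto \tfree C$ (since $\tors C \in \mcT$ and $C \in \mcC$), so $\ctt(\tfree C) = C$ in $\mcC/(\mcC\cap\mcT)$ by the uniqueness in \cref{lemma:cotors_functors_cd}. You instead show $\bar\tfree$ is fully faithful via an explicit $\Hom$-computation, using the adjunction $\tfree \dashv \mathrm{incl}$ and the $\Ext^1$-vanishing to identify the relevant quotient; your ``upgrade'' step (factoring through $\mcT$ iff factoring through $\mcC\cap\mcT$, via lifting along $\ctt T \onto T$) plays the role of the paper's observation that the torsion sequence does double duty. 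The paper's argument is a bit slicker and more symmetric; yours makes the role of the $\Ext^1$-orthogonality very explicit. One small point worth stating: the isomorphism $\tfree(\ctt F) \cong F$ is natural in $F$ (the epimorphisms $\ctt F \onto F$ commute with the lifted maps in $\mcA$, and $\tfree$ is a functor), which you need for the final sentence identifying $\ctt|_\mcF$ as the quasi-inverse.
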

\begin{proof}
    The functor $\tfree\colon \mcA \to \mcF$ satisfies $\tfree(\mcT) = 0$, and so its restriction to $\mcC$ induces a unique functor
    \[
        \overline{\tfree|_\mcC}\colon \frac{\mcC}{\mcC\cap\mcT} \to \mcF
    \]
    making the top triangle in the following diagram commute.
    \[
        \begin{tikzpicture}[scale=.75]
            \node (1) at (0,2.5) {$\mcC$};
            \node (3) at (4,2.5) {$\mcF$};
            \node (4) at (0,0) {$\frac{\mcC}{\mcC \cap \mcT}$};
            \node (5) at (4,0) {$\mcA$};
            \path[->,font=\scriptsize]
                (1) edge node[auto] {$\tfree|_\mcC$} (3)
                (5) edge node[auto] {$\ctt$} (4)
                (1) edge node[auto] {} (4)
                (3) edge[right hook->] node[auto] {} (5)
                (3) edge[bend left=6] node[auto] {$\ctt|_\mcF$} (4)
                (4) edge[bend left=6, densely dashed] node[auto] {$\overline{\tfree|_\mcC}$} (3);
        \end{tikzpicture}
    \]
    We claim that the restriction of $\ctt$ in \cref{lemma:cotors_functors_cd} to $\mcF$,
    \[
        \ctt|_\mcF\colon \mcF \to \frac{\mcC}{\mcC\cap\mcT},
    \]
    is a quasi-inverse.

    By the proof of the previous lemma, for any $g\colon A \to B$ in $\mcF$, we obtain a commutative diagram in $\mcA$
    \[
        \begin{tikzpicture}[scale=.75]
            \node (dA) at (0,0) {$\ctf A$};
            \node (cA) at (3,0) {$\ctt A$};
            \node (A) at (6,0) {$A$};
            \node (dB) at (0,-2) {$\ctf B$};
            \node (cB) at (3,-2) {$\ctt B$};
            \node (B) at (6,-2) {$B$,};
            \path[->,font=\scriptsize]
                (dA) edge [right hook->] (cA)
                (cA) edge[->>] (A)
                (dB) edge [right hook->] (cB)
                (cB) edge [->>] (B)
                (dA) edge [densely dashed] node[auto] {$\hat g$} (dB)
                (cA) edge [densely dashed] node[auto] {$\bar g$} (cB)
                (A) edge node[auto] {$g$} (B);
        \end{tikzpicture}
    \]
    where the lifting $\ctt(g) = \bar g\colon \ctt A\to \ctt B$ is unique in $\frac{\mcC}{\mcC\cap\mcT}$.  Since $\hat g\colon \ctf A \to \ctf B$ is in $\mcT$ and $g\colon A \to B$ is in $\mcF$, the functoriality of $\tfree$, pointed out in \cref{rmk:tp}, implies that $\overline{\tfree|_\mcC}\circ\ctt|_\mcF \isom \id_\mcF$.

    For the other composition, let $h\colon C\to C'$ be an arbitrary morphism in $\frac{\mcC}{\mcC \cap \mcT}$, and choose a preimage $k$ in $\mcC$.  In $\mcA$ we obtain the commutative diagram
    \[
        \begin{tikzpicture}[scale=.75]
            \node (1) at (0,2) {$\tors C$};
            \node (2) at (3,2) {$C$};
            \node (3) at (6,2) {$\tfree C$};
            \node (4) at (0,0) {$\tors C'$};
            \node (5) at (3,0) {$C'$};
            \node (6) at (6,0) {$\tfree C'$};
            \path[->,font=\scriptsize]
                (1) edge [right hook->] (2)
                (2) edge[->>] (3)
                (4) edge [right hook->] (5)
                (5) edge [->>] (6)
                (1) edge node[auto] {$\tors k$} (4)
                (2) edge node[auto] {$k$} (5)
                (3) edge node[auto] {$\tfree k$} (6);
        \end{tikzpicture}
    \]
    from the torsion pair $(\mcT,\mcF)$.  Since $C,C'\in \mcC$, this tells us that $k$ is a lift of $\tfree k$.  Because this lift is unique in $\frac{\mcC}{\mcC \cap \mcT}$, we conclude that
    \[
        h = \ctt\circ\tfree(k) = \ctt\circ\overline{\tfree|_\mcC}(h),
    \]
    and so $\ctt|_\mcF\circ\overline{\tfree|_\mcC} = \id_{\frac{\mcC}{\mcC \cap \mcT}}$.
\end{proof}

\begin{ex}
    Let $\mcA = \Rep_\field^\mathrm{fp} \mbR_{\geq 0}$, and let $(\mcC,\mcT,\mcF)$ be the cotorsion torsion triple of \cref{ex:first_cttt}.  Note that
    \[
        \mcC \cap \mcT = \add \big(\{ \field_{\hopen x,\infty} \mid x \geq 1 \} \cup \{ \field_{\hopen 0,y} \mid y < 1 \}  \cup \{ \field_{\hopen 0,\infty} \}\big).
    \]
    We obtain the indecomposable objects in $\frac{\mcC}{\mcC\cap\mcT}$ by removing those in $\mcC\cap\mcT$, so
    \[
        \Ob \Big(\frac{\mcC}{\mcC\cap\mcT}\Big) = \add \{ \field_{\hopen x,\infty} \mid 0 < x < 1 \} \cup \{ \field_{\hopen x,y} \mid 0 < x < y < 1 \}.
    \]
    Recall that
    \[
        \mcF = \add \{ \field_{\hopen x,y} \mid 0 < x < y \leq 1 \}.
    \]
    One may verify that the bijection on objects given by the equivalence $\frac{\mcC}{\mcC\cap\mcT} \simeq \mcF$ of \cref{thm.ctt} is
    \begin{align*}
        \Ob\Big(\frac{\mcC}{\mcC\cap\mcT}\Big) & \longleftrightarrow \Ob(\mcF) \\
        \field_{\hopen x,y} & \longleftrightarrow \field_{\hopen x,y} \\
        \field_{\hopen x,\infty} & \longleftrightarrow \field_{\hopen x,1}
    \end{align*}
    for all $0<x<y<1$.
\end{ex}

\subsection{Torsion and cotorsion pairs and tilting}

In this section we discuss how to produce (co)torsion pairs in a class of very general abelian categories.  To this end we will introduce the notion of (weak) tilting subcategories.  When applying this to our running example, we notice, in \cref{ex.tilting}, that only one out of two very similar subcategories give rise to a good tilting theory.  The deciding factor turns out to be whether or not the subcategory approximates the abelian category well enough.  The following definition makes this condition precise.

\begin{defn}
    Let $\mcE$ be an additive category, with $\mcX\subseteq\mcE$ a full subcategory and $E$ an object of $\mcE$.  A morphism $\varphi\colon X\to E$, with $X\in\mcX$, is said to be a \emph{right $\mcX$-approximation of $E$} if for every $X'\to E$, with $X'\in\mcX$, there exists a morphism $X'\to X$ making the following diagram commute
    \[
        \begin{tikzpicture}[scale=.75]
            \node (1) at (3,2) {$X$};
            \node (2) at (3,0) {$E$,};
            \node (3) at (0,0) {$X'$};
            \path[->,font=\scriptsize]
                (1) edge node[auto] {$\varphi$} (2)
                (3) edge[densely dashed] node[auto] {} (1)
                (3) edge node[auto] {} (2);
        \end{tikzpicture}
    \]
    or, in equivalent terms, $\varphi$ induces an epimorphism of functors
    \[
        \Hom(-,X)|_\mcX \stackrel{\varphi_*}\onto \Hom(-,E)|_\mcX.
    \]
    If every $E\in\mcE$ admits a right $\mcX$-approximation, then $\mcX \subseteq \mcE$ is said to be \emph{contravariantly finite}.

    Dually, the morphism $\psi\colon E \to X$ is a \emph{left $\mcX$-approximation of $E$} if $\psi$ induces an epimorphism of functors
    \[
        \Hom(X,-)|_\mcX \stackrel{\psi^*}\onto \Hom(E,-)|_\mcX,
    \]
    and if every $E$ admits such a left $\mcX$-approximation, $\mcX \subseteq \mcE$ is \emph{covariantly finite}.

    The subcategory $\mcX\subseteq\mcE$ is simply called \emph{functorially finite}, provided it is both covariantly and contravariantly finite.
\end{defn}

The use of the word \emph{finite} stems from the fact that the existence of a right approximation of an object $E$ implies that the functor $\Hom(-,E)|_\mcX \colon \mcX^\op \to \Ab$ is finitely generated, in the following sense.
\begin{defn}
    Let $\mcX$ be an additive category.  An additive functor $F\colon \mcX^\op \to \Ab$ is \emph{finitely generated} provided there is an object $X\in\mcX$ and an epimorphism of functors $\Hom_\mcX(-,X) \onto F$.
\end{defn}

\begin{defn}[{Compare \cite[Proposition~4.3]{MR2786598}}]
    \label{def.tilting}
    Let $\mcA$ be an abelian category with enough projectives. An additively closed full subcategory $\mbT$ of $\mcA$, i.e., one which is closed under taking finite direct sums and summands, is a \emph{weak tilting} subcategory if
    \begin{enumerate}
        \item $\Ext_{\mcA}^1(T_1, T_2) = 0$ for all $T_1, T_2 \in \mbT$.
        \item Any object $T \in \mbT$ has projective dimension at most $1$, that is, it appears in a short exact sequence
            \[
                P_1 \into P_0 \onto T
            \]
            with $P_i$ projective in $\mcA$.
        \item For any $P$ projective in $\mcA$, there is a short exact sequence
            \[
                P \into T^0 \onto T^1
            \]
            with $T^i \in \mbT$.
    \end{enumerate}
    A weak tilting subcategory $\mbT\subseteq \mcA$ is \emph{tilting} provided $\mbT$ is contravariantly finite in $\mcA$.
\end{defn}

\begin{remark} \label{rem.cores_on_sums_enough}
    Clearly we do not need to verify (2) for all objects in $\mbT$: It suffices to do so for a subcollection $\mbT' \subseteq \mbT$ such that $\add \mbT' = \mbT$.

    Similarly we do not need to check (3) for all projectives, it suffices to check it for a subcategory $\mathbb{P} \subseteq \Proj \mcA$ such that $\add \mathbb{P} = \Proj \mcA$. Indeed, the collection of objects having two-term $\mbT$ coresolutions is clearly closed under direct sums. Thus it remains to show that it is also closed under summands. So assume $P \oplus Q$ has a two-term $\mbT$ coresolutions as in the top row of the following diagram.
    \[
        \begin{tikzcd}[ampersand replacement=\&]
            P \oplus Q \ar[r,right hook->,"{(f \; g)}"] \ar[d,equal] \& T^0 \ar[r,->>] \& T^1 \\
            P \oplus Q \ar[r,right hook->,"{\left( \begin{smallmatrix} f & 0 \\ 0 & g \end{smallmatrix} \right)}"] \& (T^0)^2 \ar[r,->>] \ar[u,->>,"{(1\;1)}" swap] \& \coker f \oplus \coker g \ar[u,->>,densely dashed] \\
            \& T_0 \ar[r,equal] \ar[u,right hook->,"{\left( \begin{smallmatrix} \phantom-1 \\ -1 \end{smallmatrix} \right)}" swap] \& T_0 \ar[u,right hook->,densely dashed]
        \end{tikzcd}
    \]
    We can clearly draw the remaining solid part of this diagram, and it follows that there is the dashed short exact sequence in the right column. Since $\Ext_{\mcA}^1(T^1, T_0) \subseteq \Ext_{\mcA}^1(\mbT, \mbT) = 0$ this sequence splits, and thus both $\coker f$ and $\coker g$ are in $\mbT$.
\end{remark}

\begin{remark}
    The existence of enough projectives is not essential for tilting theory.  However we keep this assumption here since it simplifies the formulation of the definition, and it will be satisfied in the examples we are interested in.
\end{remark}

Conditions (1) to (3) are the standard conditions for tilting, while assuming contravariant finiteness is less commonly made explicit.  In fact, we proceed to show that this is automatically satisfied if the abelian category $\mcA$ is \emph{noetherian}, that is, any ascending chain of subobjects of a given object eventually becomes stationary.  We will see below, however, that it is not automatically satisfied in general, even for fairly reasonable categories.

\begin{prop}
    \label{prop.noetherian_tilting}
    Let $\mcA$ be a noetherian abelian category with enough projectives and assume that $\mbT$ is a weak tilting subcategory of $\mcA$.  Then $\mbT$ is automatically contravariantly finite, and therefore already tilting.
\end{prop}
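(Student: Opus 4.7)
The plan is to construct, for every $A \in \mcA$, an explicit right $\mbT$-approximation; the weak tilting data provides one-sided approximation structure in the form of condition~(3), and noetherianness will be invoked to convert this into full contravariant finiteness. First I would choose, using the enough-projectives assumption, an epimorphism $P \twoheadrightarrow A$ from a projective $P$, and apply condition~(3) to obtain a short exact sequence $0 \to P \to T^0 \to T^1 \to 0$ with $T^0, T^1 \in \mbT$. Pushing out along $P \twoheadrightarrow A$ produces a short exact sequence $0 \to A \to Q \to T^1 \to 0$ together with an epimorphism $T^0 \twoheadrightarrow Q$, so $A$ sits as a subobject of a quotient of an object of $\mbT$ --- the basic structural leverage for what follows.

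Next I would use noetherianness to cut the problem down to finite size. The trace $A' := \sum_{\varphi} \Im(\varphi)$ of $\mbT$ in $A$, where $\varphi$ ranges over morphisms $T \to A$ with $T \in \mbT$, is an ascending union of subobjects of $A$ and therefore stabilizes under the noetherian hypothesis. Consequently finitely many morphisms $\varphi_i \colon T_i \to A$ already cut out $A'$, so the induced map $\varphi \colon T^{(0)} := T_1 \oplus \cdots \oplus T_n \to A$ has image exactly $A'$. Every $\psi \colon T \to A$ with $T \in \mbT$ factors through $A' \hookrightarrow A$, so $\varphi$ is a natural candidate for a right approximation.

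The main obstacle is that factoring through $A'$ is not the same as factoring through $\varphi$: the obstruction to lifting a given $\psi$ across the epimorphism $T^{(0)} \twoheadrightarrow A'$ lives in $\Ext_\mcA^1(T, L)$, where $L = \ker\bigl(T^{(0)} \twoheadrightarrow A'\bigr)$. If $L \in \mbT^{\perp_1}$ we are done, so the remaining task is to enlarge $T^{(0)}$ until this holds. The iterative idea is to realize a non-zero obstruction as a non-split extension $0 \to L \to E \to T \to 0$, embed $E$ into some $T^{(\mathrm{new})} \in \mbT$ via condition~(3) applied to a projective cover of $E$, and adjoin $T^{(\mathrm{new})}$ as a direct summand of $T^{(0)}$. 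Conditions~(1) and~(2) (the $\Ext^1$-vanishing inside $\mbT$, and the bound $\pdim T \leq 1$ which ensures all obstructions live in a single $\Ext^1$) propagate the needed vanishing statements through the construction, while noetherianness is used a second time to force the iteration to terminate in finitely many steps.

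The most delicate point is identifying the precise monotone invariant in $\mcA$ that witnesses termination of the enlargement process. Rather than tracking $\Ext^1(T, L_k)$ directly (which lives in a category without an obvious noetherian structure), the plan is to reinterpret each step as producing a strictly larger subobject of some fixed ambient object of $\mcA$ and then invoke the ascending chain condition there. Once termination is established, the final enlarged $T^{(k)} \to A$ is an epimorphism onto $A'$ with kernel in $\mbT^{\perp_1}$, so every $\psi \colon T \to A$ lifts and we have the desired right $\mbT$-approximation, thereby upgrading $\mbT$ from weak tilting to tilting.
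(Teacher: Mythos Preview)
Your second paragraph---using noetherianity to produce the trace $A'$ of $\mbT$ in $A$ and a single epimorphism $T^{(0)} \twoheadrightarrow A'$---matches the paper's opening move exactly. The divergence, and the genuine gap, is in what follows. You propose to repair the candidate approximation by iteratively enlarging $T^{(0)}$ until the kernel lands in $\mbT^{\perp_1}$, but you yourself flag termination as ``the most delicate point'' and defer it to an unspecified monotone invariant inside a ``fixed ambient object of $\mcA$''. No such object is produced, and it is not clear one exists: the kernels $L_k$ form an ascending chain, but the objects $T^{(k)}$ containing them grow along with them, so noetherianity of $\mcA$ gives no obvious grip. (Your first paragraph's pushout, which embeds $A$ into a quotient of an object of $\mbT$, is never reconnected to the argument either.)

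The paper avoids iteration entirely. After obtaining $K \hookrightarrow T \twoheadrightarrow A'$ (your $L \hookrightarrow T^{(0)} \twoheadrightarrow A'$), it reads off from the long exact sequence that the approximation defect is the functor $\Ext^1_\mcA(-, K)|_\mbT$, and it suffices to show this is finitely generated. Choosing a projective epimorphism $P \twoheadrightarrow K$ and using $\pdim \mbT \leq 1$ gives a functor epimorphism $\Ext^1_\mcA(-, P)|_\mbT \twoheadrightarrow \Ext^1_\mcA(-, K)|_\mbT$; then condition~(3) applied to $P$ yields $P \hookrightarrow T^0 \twoheadrightarrow T^1$, whence $\Hom_\mcA(-, T^1)|_\mbT \twoheadrightarrow \Ext^1_\mcA(-, P)|_\mbT$ since $\Ext^1_\mcA(-, T^0)|_\mbT = 0$. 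This one-shot chain of epimorphisms finishes the proof with no second appeal to noetherianity. The moral: rather than enlarging $T^{(0)}$ to kill the $\Ext^1$-obstruction, apply your first-paragraph construction to the \emph{kernel} $K$ (not to $A$) and read the result functorially.
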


A related result can be found in the proof of \cite[Theorem~2.5(ii)]{MR1458808}.

\begin{proof}
    We need to show that $\mbT \subseteq \mcA$ is contravariantly finite, i.e., for any given object $M \in \mcA$, the functor
    \[
        \Hom_\mcA(-, M)|_{\mbT}\colon \mbT^\op \to \Ab
    \]
    is finitely generated.  To this end, first observe that by noetherianity, $M$ has a (unique) maximal subobject $\tors M$ that is an epimorphic image of an object $T \in \mbT$.  This means that for $X\in \mbT$ and $f\colon X\to M$ the factorization through $\tors M$ in
    \[
        \begin{tikzpicture}[scale=.75]
            \node (1) at (0,2) {$X$};
            \node (2) at (2,1) {$\im f$};
            \node (3) at (4,2) {$M$};
            \node (4) at (4,0) {$\tors M$};
            \path[->,font=\scriptsize]
                (1) edge[->>] node[auto] {} (2)
                (2) edge[right hook->] node[auto] {} (3)
                (1) edge[bend left=15] node[auto] {$f$} (3)
                (2) edge[right hook->, densely dashed] node[auto] {} (4)
                (4) edge[right hook->] node[auto] {} (3);
        \end{tikzpicture}
    \]
    exists and is unique.  From this follows that $\Hom_\mcA(-, M)|_{\mbT} \isom \Hom_\mcA(-, \tors M)|_{\mbT}$: Any morphism $f\colon X \to M$ is sent to the composition $X\onto \im f\into \tors M$.  The other direction is then clearly given by sending $X\to \tors M$ to $X\to \tors M\into M$.

    Denote by $K$ the kernel of the epimorphism $T \stackrel\varphi\onto \tors M$.  Then we obtain an exact sequence
    \[
        \Hom_\mcA(-, T)|_{\mbT} \stackrel{\varphi_*}\to \Hom_\mcA(-, \tors M)|_{\mbT} \to \Ext_\mcA^1(-, K)|_{\mbT} \to \Ext_\mcA^1(-, T)|_{\mbT}.
    \]
    Since $\Hom_\mcA(-, M)|_{\mbT} \isom \Hom_\mcA(-, \tors M)|_{\mbT}$ and the last term vanishes by (1), we obtain a short exact sequence
    \[
        \im \varphi_* \into \Hom_\mcA(-, M)|_{\mbT} \onto \Ext_\mcA^1(-, K)|_{\mbT},
    \]
    where the image of $\varphi_*$ is finitely generated via $\Hom_\mcA(-, T)|_{\mbT} \onto \im \varphi_*$.  Thus by the horseshoe lemma, $\Hom_\mcA(-, M)|_{\mbT}$ is finitely generated provided $\Ext_\mcA^1(-, K)|_{\mbT}$ is.

    Let $P \onto K$ be an epimorphism from a projective in $\mcA$.  Since the projective dimension of $\mbT$ is at most $1$ by (2), we obtain an epimorphism $\Ext_\mcA^1(-,P)|_{\mbT} \onto \Ext_\mcA^1(-, K)|_{\mbT}$.  Thus it suffices to check that $\Ext_\mcA^1(-, P)|_{\mbT}$ is finitely generated for $P$ projective.  So let $P \into T^0 \onto T^1$ be the sequence from (3).  This sequence induces a long exact sequence, whose relevant part is
    \[
        \Hom_\mcA(-, T^1)|_{\mbT} \to \Ext_\mcA^1(-, P)|_{\mbT} \to \Ext_\mcA^1(-,T^0)|_{\mbT}.
    \]
    Since the last term is $0$ by (1), $\Ext_\mcA^1(-,P)|_{\mbT}$ is finitely generated and the claim follows.
\end{proof}

\begin{ex}
    \label{ex.tilting}
    Consider the abelian category $\Rep_\field^\mathrm{fp} \mbR_{\geq 0}$ of finitely presented (covariant) representations of $\mbR_{\geq 0}$ described in \cref{ex:repsofR}.  Recall that the functors $\field_{\hopen x,\infty}$ are projective, while it can be shown that the functors $\field_{\hopen 0,x}$ are injective, for any $x\in\mbR_{\geq 0}$.

    Define two subcategories by
    \begin{align*}
        \mbT_{>1} & = \add \big(\{ \field_{\hopen x,\infty} \mid x > 1 \} \cup \{ \field_{\hopen 0,y} \mid y \leq 1 \} \cup \{ \field_{\hopen 0,\infty} \}\big), \quad \text{and} \\
        \mbT_{\geq 1} & = \add \big(\{ \field_{\hopen x,\infty} \mid x \geq 1 \} \cup \{ \field_{\hopen 0,y} \mid y < 1 \}  \cup \{ \field_{\hopen 0,\infty} \}\big).
    \end{align*}
    Thus both subcategories consist of a projective object $\field_{\hopen x,\infty}$ for each $x>1$, an injective object $\field_{\hopen 0,y}$ for each $y<1$ and the projective-injective $\field_{\hopen 0,\infty}$.  The only difference is that we in addition include $\field_{\hopen 0,1}$ in the former subcategory and $\field_{\hopen 1,\infty}$ in the latter.

    We claim that $\mbT_{>1}$ is weakly tilting: Every indecomposable object of $\mbT_{>1}$ is either projective or injective, so that $\Ext^1$ vanishes, and each indecomposable object has projective dimension at most $1$ by \eqref{eq:presentation}.  Lastly, every indecomposable projective in $\Rep_\field^\mathrm{fp} \mbR_{\geq 0}$ is of form $\field_{\hopen a,\infty}$, which appear in short exact sequences
    \[
    \begin{cases}
        \field_{\hopen a,\infty} \stackrel\sim\to \field_{\hopen a,\infty} \to 0, & a>1 ,\\
        \field_{\hopen a,\infty} \into \field_{\hopen 0,\infty} \onto \field_{\hopen 0,a}, & a\leq1,
    \end{cases}
    \]
    showing that (3) also holds.  Similarly, $\mbT_{\geq 1}$ is weakly tilting.

    Moreover, $\mbT_{\geq1}$ is contravariantly finite, i.e., is tilting.  To see this, we first note that $\Hom(\field_{\hopen c,d},\field_{\hopen a,b}) = \field$ if and only if $a\leq c<b\leq d$, and zero otherwise.  With this in mind, it is straightforward to check that any indecomposable representation $\field_{\hopen a,b}$ has a right $\mbT_{\geq1}$-approximation, namely
    \[
    \begin{cases}
        \hfill \field_{\hopen 0,b} \to \field_{\hopen 0,b},&  a=0<b<1 \\
        \hfill \field_{\hopen 0,\infty} \to \field_{\hopen 0,b},&  a=0, ~ 1\leq b\leq\infty \\
        \hfill 0 \to \field_{\hopen a,b}, & 0<a<b<1 \\
        \hfill \field_{\hopen 1,\infty} \to \field_{\hopen a,b},&  0<a<1\leq b\leq\infty \\
        \hfill \field_{\hopen a,\infty} \to \field_{\hopen a,b},&  1\leq a<b\leq\infty.
    \end{cases}
    \]

    On the other hand, $\mbT_{>1}$ is not contravariantly finite, and therefore not tilting:  Indeed one observes that $\field_{\hopen 1,\infty}$ has no right $\mbT_{>1}$-approximation.  A right approximation would have to contain summands of the form $\field_{\hopen x,\infty} \to \field_{\hopen 1,\infty}$, for some $x>1$, but then any $\field_{\hopen y,\infty}$ with $x>y>1$ would violate the lifting property.
\end{ex}

Recall that the \emph{factor category} $\Fac \mcX \subseteq \mcA$ is the full subcategory consisting of factor objects of finite direct sums of objects in $\mcX$.
The following result is well known in classical tilting theory; see for instance \cite[Lemma~4.5]{MR1327209}.  We include a proof for later reference.
\begin{prop} \label{prop:torsionpair}
    Let $\mcA$ be an abelian category with enough projectives.  If $\mbT\subseteq\mcA$ is weakly tilting, then $\Fac \mbT = \mbT^{\perp_1}.$  If $\mbT$ is additionally contravariantly finite, i.e., tilting, then \[( \Fac \mbT, \mbT^{\perp} )\] is a torsion pair.
\end{prop}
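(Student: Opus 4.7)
The plan is to establish the equality $\Fac\mbT = \mbT^{\perp_1}$ first, and then leverage it together with contravariant finiteness to produce the torsion pair.

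For the inclusion $\Fac\mbT \subseteq \mbT^{\perp_1}$, I would take any $M \in \Fac\mbT$ and, since $\mbT$ is additively closed, present it as a short exact sequence $K \into T \onto M$ with $T \in \mbT$. Applying $\Hom_\mcA(T',-)$ for $T' \in \mbT$ yields the fragment
\[ \Ext^1_\mcA(T',T) \to \Ext^1_\mcA(T',M) \to \Ext^2_\mcA(T',K), \]
whose outer terms vanish by conditions (1) and (2) of \cref{def.tilting}; hence $M \in \mbT^{\perp_1}$. For the converse inclusion, given $M \in \mbT^{\perp_1}$ I would pick an epimorphism $P \onto M$ from a projective (using enough projectives) and use condition (3) to place $P$ in a short exact sequence $P \into T^0 \onto T^1$. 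Forming the pushout along $P \onto M$ produces a short exact sequence $M \into X \onto T^1$, which splits because $M \in \mbT^{\perp_1}$ and $T^1 \in \mbT$; composing $T^0 \onto X$ with the projection onto $M$ exhibits $M \in \Fac\mbT$.

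For the second part, the vanishing $\Hom_\mcA(\Fac\mbT, \mbT^\perp) = 0$ is immediate: any map $M \to N$ with $M \in \Fac\mbT$ and $N \in \mbT^\perp$ pulls back along a surjection $T \onto M$ to a morphism $T \to N$ which must be zero, whence the original map is zero. This argument also gives $\mbT^\perp = (\Fac\mbT)^\perp$, matching the form required by \cref{rmk:tp}.

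The main task, and the step I expect to require the most care, is producing the canonical short exact sequence $\tors A \into A \onto \tfree A$. Here is where I would use contravariant finiteness: pick a right $\mbT$-approximation $\varphi\colon T \to A$, set $\tors A := \im\varphi$ and $\tfree A := A/\tors A$. Then $\tors A \in \Fac\mbT$ by construction, and the heart of the argument is showing $\tfree A \in \mbT^\perp$. I would apply $\Hom_\mcA(T',-)$ to $\tors A \into A \onto \tfree A$ for $T' \in \mbT$ and use $\Ext^1_\mcA(T',\tors A) = 0$ (from the first part, since $\tors A \in \Fac\mbT \subseteq \mbT^{\perp_1}$) to conclude that any morphism $T' \to \tfree A$ lifts to a morphism $T' \to A$; the approximation property forces this lift to factor through $\varphi$, so its image in $\tfree A$ must vanish. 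Thus $\Hom_\mcA(T', \tfree A) = 0$ for every $T' \in \mbT$, which completes the verification of the torsion pair axioms.
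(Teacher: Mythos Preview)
Your proof is correct and follows essentially the same approach as the paper's. The only cosmetic difference is in the final step: where you lift a map $T' \to \tfree A$ to $T' \to A$ and then factor through the approximation, the paper encodes the same reasoning via the long exact sequence $\Hom(-,\im\varphi)|_\mbT \to \Hom(-,A)|_\mbT \to \Hom(-,\coker\varphi)|_\mbT \to \Ext^1(-,\im\varphi)|_\mbT$, observing that the first map is surjective (approximation) and the last term vanishes (since $\im\varphi \in \Fac\mbT = \mbT^{\perp_1}$).
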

\begin{proof}
    Let $X \in \Fac\mbT$, so there is an epimorphism $T\onto X$, where $T$ is an object of $\mbT$.  Since each object of $\mbT$ has projective dimension at most $1$ by (2), $\Ext^1_\mcA(\mbT,-)$ is a right exact functor.  Thus $\Ext^1_\mcA(\mbT,T) \onto \Ext^1_\mcA(\mbT,X)$ is an epimorphism, so $X \in \mbT^{\perp_1}$ by (1).  For the converse, let $P \onto X$ be an epimorphism from a projective object in $\mcA$.  By (3) we can form the following diagram, with $T^i \in \mbT$, by taking a pushout:
    \[
        \begin{tikzpicture}[scale=.75]
            \node (1) at (0,2) {$P$};
            \node (2) at (3,2) {$T^0$};
            \node (3) at (6,2) {$T^1$};
            \node (4) at (0,0) {$X$};
            \node (5) at (3,0) {$Y$};
            \node (6) at (6,0) {$T^1$};
            \tikzpo15
            \path[->,font=\scriptsize]
                (1) edge[right hook->] node[auto] {} (2)
                (2) edge[->>] node[auto] {} (3)
                (1) edge[->>] node[auto] {} (4)
                (2) edge[->>] node[auto] {} (5)
                (3) edge[-, double distance=.5mm] node[auto] {} (6)
                (4) edge[right hook->] node[auto] {} (5)
                (5) edge[->>] node[auto] {} (6);
        \end{tikzpicture}
    \]
    Thus $Y \in\Fac \mbT$, so since every extension of $T^1$ by $X$ splits, $X$ is a summand of $Y$, and thus in $\Fac \mbT$.  This completes the proof of $\Fac \mbT = \mbT^{\perp_1}$.

    Now we show that $( \Fac \mbT, \mbT^{\perp} )$ is a torsion pair.  Let $X \in \Fac\mbT$ and choose an epimorphsim $T \onto X$ with $T \in \mbT$.  This gives rise to a monomorphism $\Hom_\mcA(X,Y) \into \Hom_\mcA(T,Y)$, showing that $\Hom_\mcA(X,Y) = 0$ whenever $Y \in \mbT^\perp$.

    For an arbitrary object $A \in \mcA$, contravariant finiteness guarantees the existence of a right $\mbT$-approximation $\varphi\colon T \to A$.  The short exact sequence
    \[
        \im\varphi \stackrel i\into A \onto \coker\varphi,
    \]
    where $\im\varphi \in \Fac\mbT$, yields a long exact sequence
    \[
        \begin{split}
            0 \to \Hom_\mcA(-,\im\varphi)|_{\mbT} \stackrel{i_*} \to \Hom_\mcA(-,A)|_{\mbT} \to \Hom_\mcA(-,\coker\varphi)|_{\mbT} \phantom.
            \\ \to \Ext^1_\mcA(-,\im\varphi)|_{\mbT}.
        \end{split}
    \]
    Now $i_*$ is an epimorphism since $\varphi$ is an approximation, and the rightmost term is $0$ since $\im\varphi \in \Fac\mbT = \mbT^{\perp_1}$.  Thus $\coker\varphi \in \mbT^\perp$.
\end{proof}

\begin{ex} \label{ex.conc_torsion_from_tilting}
    Consider $\mcA = \Rep_\field^\mathrm{fp} \mbR_{\geq 0}$, and
    \[
        \mbT_{\geq 1} = \add \big(\{ \field_{\hopen x,\infty} \mid x \geq 1 \} \cup \{ \field_{\hopen 0,x} \mid x < 1 \}  \cup \{ \field_{\hopen 0,\infty} \}\big)
    \]
    as in \cref{ex.tilting}. Then the associated torsion pair is given by
    \begin{align*}
        \mcT & = \Fac \mbT_{\geq 1} = \add \big(\{ \field_{\hopen 0,x} \mid 0 < x \leq \infty \} \cup \{ \field_{\hopen x,y} \mid 1 \leq x < y \leq \infty \}\big) \\
        \mcF & = \mbT_{\geq 1}^{\perp} = \add \big(\{ \field_{\hopen x,y} \mid 0 < x < y \leq 1 \}\big).
    \end{align*}
    In other words, we recover the torsion pair in \cref{ex:first_tp}.
\end{ex}

\begin{remark}
    We used contravariant finiteness of $\mbT$ to show existence of the short exact sequence $\tors A \into A \onto \tfree A$.  The following example shows that a weak tilting subcategory is, in general, not sufficient to produce a torsion pair.
\end{remark}

\begin{ex}
    \label{ex.not_torsion_from_not_tilting}
    If one were to try the same construction with the subcategory
    \[
        \mbT_{>1} = \add \big(\{ \field_{\hopen x,\infty} \mid x > 1 \} \cup \{ \field_{\hopen 0,x} \mid x \leq 1 \} \cup \{ \field_{\hopen 0,\infty} \}\big),
    \]
    also from \cref{ex.tilting}, but which is not contravariantly finite, we observe that
    \begin{align*}
        \Fac \mbT_{>1} & = \add \big(\{ \field_{\hopen 0,x} \mid 0 < x \leq \infty \} \cup \{ \field_{\hopen x,y} \mid 1 < x < y \leq \infty \}\big) \\
        \mbT_{>1}^{\perp} & = \add \big(\{ \field_{\hopen x,y} \mid 0 < x < y \leq 1 \}\big).
    \end{align*}
    This pair of subcategories does not form a torsion pair: Either abstractly, because $\mbT_{\geq1}^\perp = \mbT_{>1}^\perp$ and two different torsion pairs cannot have the same torsion-free class, or concretely, because $\field_{\hopen 1,\infty}$ does not appear as an extension between these two subcategories.
\end{ex}

To contrast the construction of torsion pairs from tilting subcategories as in \cref{prop:torsionpair},
we now show that \emph{weak} tilting subcategories are already sufficient to produce cotorsion pairs; cf. \cite[Theorem~5.5]{MR1097029}
\begin{prop} \label{prop.cotorsion_from_tilting}
    Let $\mbT$ be weakly tilting in an abelian category $\mcA$ with enough projectives. Then
    \[
        \big({}^{\perp_1}( \Fac \mbT ), \Fac \mbT\big)
    \]
    is a cotorsion pair, and moreover
    \[ {}^{\perp_1}( \Fac \mbT ) = \{ X \in {}^{\perp_1}\mbT \mid \pdim X \leq 1 \}. \]

    If $\mbT$ is additionally contravariantly finite, i.e., tilting, then the intersection of the cotorsion and cotorsion-free part is
    \[
        {}^{\perp_1}( \Fac \mbT ) \cap \Fac \mbT = \mbT.
    \]
\end{prop}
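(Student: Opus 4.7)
The plan is to construct the two short exact sequences required by the cotorsion pair axioms, and then deduce everything else via standard splitting arguments together with \cref{lem.pd=fac}. Throughout, I use \cref{prop:torsionpair} to identify $\Fac\mbT$ with $\mbT^{\perp_1}$, which also immediately gives $\mbT \subseteq {}^{\perp_1}(\Fac\mbT)$.

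For any $A \in \mcA$, I would pick a projective surjection $P \onto A$ with kernel $K$, apply condition~(3) of \cref{def.tilting} to $P$ to obtain $P \into T^0 \onto T^1$, and form the pushout along $P \onto A$. This produces a short exact sequence $A \into Y \onto T^1$ with $Y = T^0/K \in \Fac\mbT$ and $T^1 \in \mbT \subseteq {}^{\perp_1}(\Fac\mbT)$, realising the sequence $A \into \widetilde{\ctf}A \onto \widetilde{\ctt}A$. For the other sequence, apply the same construction to $K$ to get $K \into Y_K \onto T^1_K$, and then form the pushout $Q$ of the monomorphisms $K \into P$ and $K \into Y_K$. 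Pushing out monomorphisms along monomorphisms yields two short exact sequences $Y_K \into Q \onto A$ and $P \into Q \onto T^1_K$. The latter exhibits $Q$ as an extension of a projective $P$ by $T^1_K \in \mbT$, both lying in ${}^{\perp_1}(\Fac\mbT)$; since that class is closed under extensions, $Q \in {}^{\perp_1}(\Fac\mbT)$, and the former is then the desired $\ctf A \into \ctt A \onto A$ sequence.

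With both sequences in hand, the axiom $\Fac\mbT = ({}^{\perp_1}\Fac\mbT)^{\perp_1}$ follows from a splitting argument: for $B \in ({}^{\perp_1}\Fac\mbT)^{\perp_1}$, the sequence $B \into \widetilde{\ctf}B \onto \widetilde{\ctt}B$ splits, exhibiting $B$ as a summand of $\widetilde{\ctf}B \in \Fac\mbT$, and $\Fac\mbT = \mbT^{\perp_1}$ is closed under summands. For the characterization of ${}^{\perp_1}(\Fac\mbT)$, observe that $\Fac\mbT$ is trivially closed under factor modules, so \cref{lem.pd=fac} forces $\pdim X \leq 1$ for every $X$ in this subcategory; the inclusion $X \in {}^{\perp_1}\mbT$ is automatic since $\mbT \subseteq \Fac\mbT$; and the converse inclusion is a short computation using right-exactness of $\Ext^1(X,-)$ against any surjection $T \onto Y$ with $T \in \mbT$ and $Y \in \Fac\mbT$.

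For the last assertion, assume $\mbT$ is tilting and let $X \in {}^{\perp_1}(\Fac\mbT) \cap \Fac\mbT$. Pick a right $\mbT$-approximation $\varphi \colon T_0 \to X$. Since $X \in \Fac\mbT$ admits some epimorphism from an object of $\mbT$, that epimorphism factors through $\varphi$, so $\varphi$ is itself surjective with kernel $K$. The approximation property gives $\Hom(T', T_0) \onto \Hom(T', X)$ for all $T' \in \mbT$; combined with $\Ext^1(T', T_0) = 0$ from condition~(1) of \cref{def.tilting}, the long exact sequence yields $\Ext^1(T', K) = 0$, i.e.\ $K \in \mbT^{\perp_1} = \Fac\mbT$. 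Since $X \in {}^{\perp_1}(\Fac\mbT)$, the sequence $K \into T_0 \onto X$ splits, so $X$ is a summand of $T_0 \in \mbT = \add\mbT$, whence $X \in \mbT$. The one non-routine step in the whole argument is the two-step pushout construction producing the $\ctf A \into \ctt A \onto A$ sequence; everything else reduces to splitting and long-exact-sequence manipulations.
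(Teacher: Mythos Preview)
Your proof is correct and follows essentially the same route as the paper's. The only differences are cosmetic: the paper verifies the orthogonality $\Fac\mbT = ({}^{\perp_1}\Fac\mbT)^{\perp_1}$ first via a direct perpendicular-category argument rather than by splitting, and it builds the sequence $\ctf A \into \ctt A \onto A$ in one step by applying axiom~(3) to the first term of a projective presentation $P_1 \to P_0 \onto A$ and pushing out along $P_1 \to P_0$, which by composition of pushouts coincides with your two-step construction through $K$.
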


\begin{proof}
    Showing that the first condition of a cotorsion pair holds amounts to showing
    \[
        \Fac \mbT = \big( {}^{\perp_1} (\Fac \mbT) \big)^{\perp_1}.
    \]
    The first part of the proof of \cref{prop:torsionpair} shows that $\mbT \subseteq {}^{\perp_1}(\Fac\mbT)$.  Thus $\mbT^{\perp_1} \supseteq \big({}^{\perp_1}(\Fac\mbT)\big)^{\perp_1}$.
    Clearly, for any subcategory $\mcS$, we have $\mcS \subseteq ({}^{\perp_1}\mcS)^{\perp_1}$, so that in particular
    \[
        \Fac\mbT \subseteq \big({}^{\perp_1}(\Fac\mbT)\big)^{\perp_1} \subseteq \mbT^{\perp_1}.
    \]
    Then the maximal $\Ext^1$-orthogonality follows from $\Fac \mbT = \mbT^{\perp_1}$, by \cref{prop:torsionpair}.

    To conclude that this is a cotorsion pair, it thus remains to construct the two short exact sequences, as above, for a given object $A \in \mcA$. First consider a projective presentation
    \[
        P_1 \to P_0 \onto A
    \]
    together with the short exact sequence
    \[
        P_1 \into T^0 \onto T^1
    \]
    from the definition of tilting. Forming a pushout we obtain a diagram
    \[
        \begin{tikzpicture}[scale=.75]
            \node (1) at (0,4) {$P_1$};
            \node (2) at (3,4) {$T^0$};
            \node (3) at (6,4) {$T^1$};
            \node (4) at (0,2) {$P_0$};
            \node (5) at (3,2) {$V$};
            \node (6) at (6,2) {$T^1$};
            \tikzpo15
            \path[->,font=\scriptsize]
                (1) edge[right hook->] (2)
                (2) edge[->>] (3)
                (4) edge[right hook->] (5)
                (5) edge[->>] (6)
                (1) edge (4)
                (2) edge[auto] node {$f$} (5)
                (3) edge[-, double distance=.5mm] (6);
        \end{tikzpicture}
    \]
    Since $P_0$ and $T^1$ are in ${}^{\perp_1}(\Fac \mbT)$, and this subcategory is extension closed, the pushout $V$ also lies in ${}^{\perp_1}(\Fac \mbT)$.  The image of $f\colon T^0 \to V$ is a quotient of $T^0$, hence in $\Fac \mbT$.  Since $\coker f \isom A$, we thus get the first desired short exact sequence as
    \[
        \im f \into V \onto A.
    \]

    For the second short exact sequence, we consider $P_0 \into \bar T^0 \onto \bar T^1$, again from the definition of tilting, and the pushout diagram
    \[
        \begin{tikzpicture}[scale=.75]
            \node (1) at (0,4) {$P_0$};
            \node (2) at (3,4) {$\bar T^0$};
            \node (3) at (6,4) {$\bar T^1$};
            \node (4) at (0,2) {$A$};
            \node (5) at (3,2) {$\bar V$};
            \node (6) at (6,2) {$\bar T^1$};
            \tikzpo15
            \path[->,font=\scriptsize]
                (1) edge[right hook->] (2)
                (2) edge[->>] (3)
                (4) edge[right hook->] (5)
                (5) edge[->>] (6)
                (1) edge[->>] (4)
                (2) edge[->>] (5)
                (3) edge[-, double distance=.5mm] (6);
        \end{tikzpicture}
    \]
    Here the second horizontal sequence is the desired one, since the pushout is in $\Fac \mbT$ and $\bar T^1 \in \mbT \subseteq {}^{\perp_1}(\Fac \mbT)$.

    \medskip
    For the equality ${}^{\perp_1} (\Fac \mbT) = \{ X \in {}^{\perp_1}\mbT \mid \pdim X \leq 1 \}$, first note that the inclusion `$\subseteq$' holds by \cref{lem.pd=fac}. For the converse inclusion, let $X \in {}^{\perp_1} \mbT$ be of projective dimension at most one.  Let $F \in \Fac \mbT$, and choose $\varphi \colon T \onto F$ be any epimorphism with $T\in\mbT$.  Then the exact sequence
        \[
            \Ext^1_\mcA(X,T) \to \Ext^1_\mcA(X,F) \to \Ext^2_\mcA(X,\ker\varphi)
        \]
        shows that $\Ext^1_\mcA(X,F) = 0$ since the two outer terms are zero. It follows that $X \in {}^{\perp_1}( \Fac \mbT )$.

    \medskip
    We now assume that $\mbT$ is tilting and let $X \in {}^{\perp_1}( \Fac \mbT ) \cap \Fac \mbT$. Since $X \in \Fac \mbT$, i.e., there exists an epimorphism from an object in $\mbT$, and since $\mbT$ is contravariantly finite, there is a right $\mbT$-approximation $T \onto X$, which necessarily is an epimorphism.  By definition, the kernel $K$ of this approximation satisfies $\Ext^1_\mcA(-,K)|_{\mbT} = 0$, or, in other words, $K \in \mbT^{\perp_1} = \Fac \mbT$. But then the assumption $X \in {}^{\perp_1}( \Fac \mbT )$ forces this short exact sequence to split, so $K \oplus X \isom T$, whence $X \in \mbT$.
\end{proof}

\begin{ex}
    \label{ex.conc_cotorsion_from_tilting}
    Consider $\mcA = \Rep_\field^\mathrm{fp} \mbR_{\geq 0}$, and
    \[
        \mbT_{\geq 1} = \add \big(\{ \field_{\hopen x,\infty} \mid x \geq 1 \} \cup \{ \field_{\hopen 0,x} \mid x < 1 \}  \cup \{ \field_{\hopen 0,\infty} \}\big)
    \]
    as in \cref{ex.tilting}. Then the associated cotorsion pair is given by
    \begin{align*}
        \mcC & = {}^{\perp_1}(\Fac \mbT_{\geq 1}) = \add \big(\{ \field_{\hopen x,\infty} \mid 0\leq x < \infty \} \cup \{ \field_{\hopen x,y} \mid 0 \leq x < y < 1 \}\big) \\
        \mcD & = \Fac \mbT_{\geq 1} = \add \big(\{ \field_{\hopen 0,x} \mid 0 < x \leq \infty \} \cup \{ \field_{\hopen x,y} \mid 1 \leq x < y \leq \infty \}\big),
    \end{align*}
    and so we recover the cotorsion pair of \cref{ex.first_ctp}.

    As $\mbT_{>1}$ of \cref{ex.tilting} is weakly tilting, the previous proposition applies and one obtains a very similar example of a cotorsion pair.
\end{ex}

So far, we have seen that tilting subcategories give rise to both a torsion and a cotorsion pair. The next proposition (also obtained in \cite{Beligiannis2010Tilting}, see \cref{rem:related_results}) gives a converse, at least in the presence of both.

\begin{prop}
    \label{prop.cct_to_tilt}
    Let $\mcA$ be an abelian category with enough projectives, and $(\mcC, \mcT, \mcF)$ be a cotorsion torsion triple in $\mcA$.

    Then $\mcC \cap \mcT$ is a tilting subcategory.
\end{prop}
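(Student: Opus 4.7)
The plan is to verify the three defining conditions for a weak tilting subcategory for $\mbT := \mcC \cap \mcT$, and then separately to verify contravariant finiteness. Throughout, I will exploit the closure properties listed in \cref{rmk:tp} and the remarks after \cref{def.cotorsion}: both $\mcC$ and $\mcT$ are closed under extensions, $\mcT$ is closed under quotients (as the torsion part of a torsion pair), and $\mcC$ contains all projectives of $\mcA$.

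For condition (1), $\Ext^1$-vanishing is immediate: if $T_1, T_2 \in \mcC \cap \mcT$, then $T_1 \in \mcC$ and $T_2 \in \mcT$, so $\Ext^1_\mcA(T_1, T_2) = 0$ since $(\mcC, \mcT)$ is a cotorsion pair. For condition (2), projective dimension at most one follows directly from \cref{lem.pd=fac}, since $\mcC \cap \mcT \subseteq \mcC$.

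Condition (3) is the key computational step. Given a projective $P \in \mcA$, the cotorsion pair supplies a short exact sequence $P \into \widetilde\ctf P \onto \widetilde\ctt P$ with $\widetilde\ctf P \in \mcT$ and $\widetilde\ctt P \in \mcC$. I claim both outer terms lie in $\mcC \cap \mcT$. Indeed, $P$ is projective so $P \in \mcC$, and $\widetilde\ctt P \in \mcC$ by construction; since $\mcC$ is extension closed, the middle term $\widetilde\ctf P$ lies in $\mcC$ as well, giving $\widetilde\ctf P \in \mcC \cap \mcT$. Conversely, $\widetilde\ctt P$ is a quotient of $\widetilde\ctf P \in \mcT$, and $\mcT$ is closed under quotients, so $\widetilde\ctt P \in \mcT$, hence $\widetilde\ctt P \in \mcC \cap \mcT$. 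This gives the required two-term $\mbT$-coresolution of $P$.

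For contravariant finiteness, which I expect to be the main obstacle in terms of ``having the right idea'', I construct an explicit right $(\mcC \cap \mcT)$-approximation for an arbitrary $A \in \mcA$. Apply the torsion pair to get $\tors A \into A \onto \tfree A$ with $\tors A \in \mcT$, and then apply the cotorsion pair to $\tors A$ to get $\ctf(\tors A) \into \ctt(\tors A) \onto \tors A$ with $\ctt(\tors A) \in \mcC$ and $\ctf(\tors A) \in \mcT$. Since $\mcT$ is extension closed, $\ctt(\tors A) \in \mcT$, so $\ctt(\tors A) \in \mcC \cap \mcT$. I claim the composite $\varphi \colon \ctt(\tors A) \onto \tors A \into A$ is a right approximation: any $f \colon X \to A$ with $X \in \mcC \cap \mcT$ factors through $\tors A$ (the composite $X \to A \onto \tfree A$ vanishes since $X \in \mcT$ and $\tfree A \in \mcF$), and the resulting map $X \to \tors A$ lifts through $\ctt(\tors A) \onto \tors A$ because the obstruction lies in $\Ext^1_\mcA(X, \ctf(\tors A))$, which vanishes as $X \in \mcC$ and $\ctf(\tors A) \in \mcT$. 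This establishes contravariant finiteness and completes the proof.
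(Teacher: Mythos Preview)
Your proof is correct and follows essentially the same approach as the paper's own proof: the same verification of the three weak tilting conditions (using \cref{lem.pd=fac} for projective dimension and the sequence $P \into \widetilde\ctf P \onto \widetilde\ctt P$ for condition~(3)), and the same explicit right approximation $\ctt(\tors A) \onto \tors A \into A$ for contravariant finiteness. The only cosmetic difference is the order in which you argue that $\widetilde\ctf P \in \mcC$ and $\widetilde\ctt P \in \mcT$, but both orders work equally well.
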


\begin{proof}
    First we observe that, by definition of cotorsion pairs, we have
    \[
        \Ext_{\mcA}^1( \mcC \cap \mcT, \mcC \cap \mcT) \subseteq  \Ext_{\mcA}^1( \mcC, \mcT) = 0.
    \]
    Next, by \cref{lem.pd=fac}, the projective dimension of any object in $\mcC$ is at most one, so in particular the same holds for any object in $\mcC \cap \mcT$.

    To verify the third point of the definition of tilting, let $P$ be projective, and consider a short exact sequence
    \[
        P \into \widetilde \ctf P \onto \widetilde \ctt P
    \]
    with $\widetilde \ctf P \in \mcT$ and $\widetilde \ctt P \in \mcC$ as in the definition of a cotorsion pair. Since $\mcT$ is closed under factor modules, we observe that $\widetilde \ctt P \in \mcT$. Since $P \in \mcC$ and $\mcC$ is closed under extensions it follows that also $\widetilde \ctf P \in \mcC$. Thus both $\widetilde \ctf P$ and $\widetilde \ctt P$ are in $\mcC \cap \mcT$.

    Finally we show that $\mcC \cap \mcT$ is contravariantly finite in $\mcA$. Let $A \in \mcA$. Any map from $\mcT$ to $A$ factors through $\tors A$. Consider the short exact sequence
    \[ \ctf \tors A \into \ctt \tors A \onto \tors A \]
    with  $\ctf \tors A \in \mcT$ and $\ctt \tors A \in \mcC$ from the definition of a cotorsion pair. Observe that both outer terms lie in $\mcT$, whence so does the middle, i.e., $\ctt \tors A \in \mcC \cap \mcT$.

    We complete the proof by showing that the composition $\ctt\tors A \onto \tors A \into A$ is a right $\mcC\cap\mcT$-approximation.  Let $X \to A$ be any morphism, where $X \in \mcC\cap\mcT$.  Since $X \in \mcT$ we first observe that $X \to A$ factors through $\tors A \into A$.  Next, since $X \in \mcC$ we have $\Ext_{\mcA}^1( X, \ctf \tors A) = 0$, whence the map further factors through $\ctt \tors A \onto \tors A$.
\end{proof}

Combining our previous results, we obtain the following correspondence (also obtained in \cite{Beligiannis2010Tilting}, see \cref{rem:related_results}).

\begin{thm}
    \label{thm.ctt=tilting}
    Let $\mcA$ be an abelian category with enough projectives. Then the two constructions
    \begin{align*}
        \{ \text{tilting subcategories} \} &\leftrightarrow \{ \text{cotorsion torsion triples} \} \\
        \mbT &\mapsto \big( \{ X \in {}^{\perp_1}\mbT \mid \pdim X \leq 1 \} ,\Fac \mbT, \mbT^{\perp} \big) \\
        \mcC\cap\mcT &\mathrel{\reflectbox{$\mapsto$}} (\mcC,\mcT,\mcF)
    \end{align*}
    give mutually inverse bijections between the collection of cotorsion torsion triples and the collection of tilting subcategories.
\end{thm}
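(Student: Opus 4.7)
The theorem assembles the preceding propositions; the main work is to verify that the two assignments are mutually inverse.

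First I would check that both constructions are well-defined. Given a tilting subcategory $\mbT$, Proposition~\ref{prop:torsionpair} shows that $(\Fac\mbT, \mbT^\perp)$ is a torsion pair, and Proposition~\ref{prop.cotorsion_from_tilting} shows that $({}^{\perp_1}(\Fac\mbT), \Fac\mbT)$ is a cotorsion pair with the displayed description of the cotorsion part. Since both pairs share $\Fac\mbT$ as a common member, they assemble into a cotorsion torsion triple, so the forward assignment is well-defined. The backward assignment lands in tilting subcategories by Proposition~\ref{prop.cct_to_tilt}.

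One composition is immediate: starting from tilting $\mbT$, the final statement of Proposition~\ref{prop.cotorsion_from_tilting} gives $\mcC \cap \mcT = {}^{\perp_1}(\Fac\mbT) \cap \Fac\mbT = \mbT$.

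For the other composition, I start with a cotorsion torsion triple $(\mcC, \mcT, \mcF)$, set $\mbT := \mcC \cap \mcT$, and check the three equalities $\Fac\mbT = \mcT$, $\mbT^\perp = \mcF$, and $\{X \in {}^{\perp_1}\mbT \mid \pdim X \leq 1\} = \mcC$. For the first, $\Fac\mbT \subseteq \mcT$ because $\mcT$ is closed under factor objects (being a torsion class, cf.~\cref{rmk:tp}), while the reverse inclusion uses the cotorsion sequence $\ctf T \into \ctt T \onto T$ for $T \in \mcT$: since $T, \ctf T \in \mcT$ and $\mcT$ is extension-closed, the middle term $\ctt T$ lies in $\mcC \cap \mcT = \mbT$, exhibiting $T$ as a quotient of an object of $\mbT$. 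For the second, $\mcF \subseteq \mbT^\perp$ is immediate from $\mbT \subseteq \mcT$, and the reverse follows by using that every $T \in \mcT = \Fac\mbT$ admits an epimorphism from $\mbT$, which forces any $X \in \mbT^\perp$ to satisfy $\Hom(T, X) = 0$.

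The main obstacle is the third equality. The inclusion $\mcC \subseteq \{X \in {}^{\perp_1}\mbT \mid \pdim X \leq 1\}$ uses $\mbT \subseteq \mcT$ together with Lemma~\ref{lem.pd=fac}. For the reverse, suppose $X \in {}^{\perp_1}\mbT$ has projective dimension at most one; I want $X \in \mcC = {}^{\perp_1}\mcT$. Given $T \in \mcT$, the cotorsion sequence $\ctf T \into \ctt T \onto T$ (with $\ctt T \in \mcC \cap \mcT = \mbT$ as established above, and $\ctf T \in \mcT$) yields an exact sequence
\[
\Ext_\mcA^1(X, \ctt T) \to \Ext_\mcA^1(X, T) \to \Ext_\mcA^2(X, \ctf T),
\]
whose left term vanishes because $\ctt T \in \mbT$ and $X \in {}^{\perp_1}\mbT$, and whose right term vanishes because $\pdim X \leq 1$. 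Hence $\Ext_\mcA^1(X, T) = 0$ for all $T \in \mcT$, so $X \in \mcC$. This dimension-shift is the only genuinely new ingredient; everything else is a direct invocation of the preceding results.
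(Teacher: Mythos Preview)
Your proof is correct and follows the same overall strategy as the paper. The paper economizes by observing that a cotorsion torsion triple is determined by any one of its three components (since each half of a torsion or cotorsion pair determines the other), so it only verifies $\mcT = \Fac(\mcC\cap\mcT)$ and stops; your direct verifications of $\mbT^\perp = \mcF$ and of $\{X \in {}^{\perp_1}\mbT \mid \pdim X \leq 1\} = \mcC$ via the dimension-shift argument are correct but rendered unnecessary by that observation.
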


\begin{proof}
    We have seen in \cref{prop:torsionpair,prop.cotorsion_from_tilting} that the map from left to right is well-defined, and in \cref{prop.cct_to_tilt} that the map from right to left is. Thus it only remains to check that they are mutually inverse.

    Starting with a cotorsion torsion triple $(\mcC, \mcT, \mcF)$ it suffices to verify that $\mcT = \Fac ( \mcC \cap \mcT )$, since cotorsion torsion triples are determined by any of their parts. The inclusion `$\supseteq$' is immediate since $\mcT$ is closed under quotients. For `$\subseteq$', let $T \in \mcT$ and consider the short exact sequence $\ctf T \into \ctt T \onto T$, with $\ctf T \in \mcT$ and $\ctt T \in \mcC$. Since $\mcT$ is closed under extensions, we infer that $\ctt T \in \mcC \cap \mcT$, whence $T \in \Fac ( \mcC \cap \mcT )$.

    Conversely, starting with a tilting subcategory $\mbT$, \cref{prop.cotorsion_from_tilting} tells us that $\mbT$ is recovered as the intersection of cotorsion and cotorsion-free part of the associated cotorsion pair.
\end{proof}

Combining this with \cref{thm.ctt}, we get:

\begin{cor}
    \label{cor:equiv_ctt}
    Let $\mbT$ be a tilting subcategory in an abelian category $\mcA$ with enough projective objects.  Then there is an equivalence
    \[
        \frac{\{X \in {}^{\perp_1}\mbT \mid \pdim X \leq 1 \}}{\mbT} \simeq \mbT^\perp.
    \]
\end{cor}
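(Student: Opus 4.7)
The plan is to combine the two main results immediately preceding the corollary, namely \cref{thm.ctt} and \cref{thm.ctt=tilting}, in a formal manner. The former gives, for any cotorsion torsion triple $(\mcC, \mcT, \mcF)$ in an abelian category, an equivalence $\mcF \simeq \mcC/(\mcC \cap \mcT)$, while the latter establishes that tilting subcategories are in bijection with cotorsion torsion triples via $\mbT \mapsto (\{X \in {}^{\perp_1}\mbT \mid \pdim X \leq 1\},\, \Fac\mbT,\, \mbT^\perp)$ with inverse $(\mcC,\mcT,\mcF) \mapsto \mcC \cap \mcT$.

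The proof proceeds in essentially one step. Starting with the tilting subcategory $\mbT \subseteq \mcA$, I would apply \cref{thm.ctt=tilting} to produce the associated cotorsion torsion triple
\[
    (\mcC, \mcT, \mcF) = \big( \{X \in {}^{\perp_1}\mbT \mid \pdim X \leq 1\},\; \Fac \mbT,\; \mbT^\perp \big).
\]
Since the correspondence of \cref{thm.ctt=tilting} is a bijection, its inverse identifies $\mbT$ with $\mcC \cap \mcT$. Feeding this triple into \cref{thm.ctt} yields
\[
    \mbT^\perp = \mcF \simeq \frac{\mcC}{\mcC \cap \mcT} = \frac{\{X \in {}^{\perp_1}\mbT \mid \pdim X \leq 1\}}{\mbT},
\]
which is exactly the claimed equivalence.

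There is no substantive obstacle here: the content of the corollary is entirely encoded in the two theorems it is assembled from, and the only thing to verify is that the identification $\mcC \cap \mcT = \mbT$ supplied by \cref{thm.ctt=tilting} matches the denominator in the stated quotient. This is immediate from the definition of the inverse map in that theorem, so the proof is really just a matter of unwinding notation.
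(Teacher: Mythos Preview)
Your proposal is correct and matches the paper's own argument exactly: the corollary is obtained by applying \cref{thm.ctt=tilting} to produce the cotorsion torsion triple associated to $\mbT$, identifying $\mcC\cap\mcT$ with $\mbT$ via the inverse bijection, and then invoking \cref{thm.ctt}. There is nothing to add.
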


\begin{ex}
    Let $\mcA = \Rep_\field^\mathrm{fp} \mbR_{\geq 0}$, and $\mbT_{\geq 1}$ as in \cref{ex.tilting}. We have already described $\mcC$, $\mcT$, and $\mcF$ in \cref{ex.conc_torsion_from_tilting,ex.conc_cotorsion_from_tilting} above. We obtain the indecomposables for $\mcC / \mbT_{\geq 1}$ by removing those in $\mbT_{\geq1}$, so
    \[
        \mathrm{Ob} (\mcC / \mbT_{\geq 1}) = \add \{ \field_{\hopen x,\infty} \mid 0 < x < 1 \} \cup \{ \field_{\hopen x,y} \mid 0 < x < y < 1 \}.
    \]
    Recall that
    \[
        \mcF = \add \{ \field_{\hopen x,y} \mid 0 < x < y \leq 1 \}.
    \]
    One may verify that the bijection on objects given by the equivalence $\mcC / \mbT_{\geq 1} \simeq \mcF$ of \cref{thm.ctt} is
    \begin{align*}
        \mathrm{Ob} (\mcC / \mbT_{\geq 1}) & \longleftrightarrow \mathrm{Ob}(\mcF) \\
        \field_{\hopen x,y} & \longleftrightarrow \field_{\hopen x,y} \\
        \field_{\hopen x,\infty} & \longleftrightarrow \field_{\hopen x,1}
    \end{align*}
    for all $0<x<1$.
\end{ex}

\subsection{Summary of dual statements}
Each result discussed so far in this section has dual results (with dual proofs, also obtained in \cite{Beligiannis2010Tilting}, see \cref{rem:related_results}).  We summarize the results below, keeping in mind that torsion pairs and cotorsion pairs are self-dual structures.

Suppose $\mcA$ is an abelian category with enough injectives.  An additively closed full subcategory $\mbC$ of $\mcA$ is a \emph{weak cotilting} subcategory if and only if it is weakly tilting in $\mcA^\op$.  In other words, it needs to satisfy the dual requirements
\begin{enumerate}
    \item $\Ext_{\mcA}^1(C_1, C_2) = 0$ for all $C_1, C_2 \in \mbC$.
    \item Any object $C \in \mbC$ has injective dimension at most $1$.
    \item For any $I$ injective in $\mcA$, there is a short exact sequence
        \[
            C^1 \into C^0 \onto I
        \]
        with $C^i \in \mbC$.
\end{enumerate}
A weak cotilting subcategory $\mbC \subseteq \mcA$ is \emph{cotilting} if it is additionally covariantly finite in $\mcA$.

The \emph{subobject category} $\Sub \mcX \subseteq \mcA$ is the full subcategory determined by subobjects of finite direct sums of objects in $\mcX$.

\begin{prop}[Dual of \cref{prop:torsionpair,prop.cotorsion_from_tilting}]
    \label{prop:dualtorsionpair}
    Let $\mcA$ be an abelian category with enough injectives.  If $\mbC \subseteq \mcA$ is weakly cotilting, then $\Sub\mbC = {}^{\perp_1} \mbC$,  $( \Sub \mbC )^{\perp_1} = \{X \in \mbC^{\perp_1} \mid \idim X \leq 1 \}$, and
    \[
        \big(\Sub \mbC, (\Sub \mbC)^{\perp_1}\big)
    \]
    is a cotorsion pair.
    If $\mbC$ is additionally cotilting, then
    \[
        ({}^\perp\mbC, \Sub\mbC)
    \]
    is a torsion pair and
    \[
        ( \Sub \mbC )^{\perp_1} \cap \Sub \mbC = \mbC.
    \]
\end{prop}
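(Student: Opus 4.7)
The plan is to deduce every assertion by passing to the opposite category $\mcA^{\op}$, which has enough projectives since $\mcA$ has enough injectives. First I would verify that $\mbC$ is weakly tilting (respectively, tilting) in $\mcA^{\op}$ if and only if it is weakly cotilting (respectively, cotilting) in $\mcA$: condition (1) is symmetric in its two arguments; condition (2) matches because $\pdim$ computed in $\mcA^{\op}$ coincides with $\idim$ computed in $\mcA$; condition (3) matches after reversing arrows, since injective objects of $\mcA$ are precisely the projective objects of $\mcA^{\op}$; and contravariant finiteness in $\mcA^{\op}$ is exactly covariant finiteness in $\mcA$.

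Next I would set up the dictionary for interpreting the earlier constructions in $\mcA^{\op}$ back in $\mcA$. Epimorphic quotients in $\mcA^{\op}$ are monic subobjects in $\mcA$, so $\Fac\mbC$ in $\mcA^{\op}$ corresponds to $\Sub\mbC$ in $\mcA$. The $\Ext^1$-bifunctor is unchanged apart from a swap of its arguments, so $\mbC^{\perp_1}$ in $\mcA^{\op}$ becomes ${}^{\perp_1}\mbC$ in $\mcA$ (and likewise with the decoration on the other side); the analogous swap holds for $\Hom$-orthogonals. Consequently, a cotorsion pair $(\mcC,\mcD)$ in $\mcA^{\op}$ becomes a cotorsion pair $(\mcD,\mcC)$ in $\mcA$, while a torsion pair $(\mcT,\mcF)$ in $\mcA^{\op}$ becomes a torsion pair $(\mcF,\mcT)$ in $\mcA$.

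With this dictionary in hand, each statement of the proposition falls out directly from its tilting counterpart. The identity $\Fac\mbC=\mbC^{\perp_1}$ in $\mcA^{\op}$, provided by \cref{prop:torsionpair}, translates to $\Sub\mbC={}^{\perp_1}\mbC$ in $\mcA$. The cotorsion pair $\bigl({}^{\perp_1}(\Fac\mbC),\Fac\mbC\bigr)$ of \cref{prop.cotorsion_from_tilting} translates to $\bigl(\Sub\mbC,(\Sub\mbC)^{\perp_1}\bigr)$, and the explicit description of the cotorsion part there becomes $(\Sub\mbC)^{\perp_1}=\{X\in\mbC^{\perp_1}\mid\idim X\leq 1\}$. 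If $\mbC$ is additionally cotilting, so that $\mbC$ is tilting in $\mcA^{\op}$, the torsion pair $(\Fac\mbC,\mbC^{\perp})$ produced by \cref{prop:torsionpair} transforms into $({}^{\perp}\mbC,\Sub\mbC)$, and the intersection identity ${}^{\perp_1}(\Fac\mbC)\cap\Fac\mbC=\mbC$ from \cref{prop.cotorsion_from_tilting} becomes $(\Sub\mbC)^{\perp_1}\cap\Sub\mbC=\mbC$.

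The only real difficulty is the bookkeeping: one has to track carefully which side of each orthogonality relation is decorated and in what order the two components of a cotorsion or torsion pair appear after dualization. Beyond this, no mathematical obstacle arises, since each constituent has already been settled in the tilting setting and the translation is a line-by-line transcription.
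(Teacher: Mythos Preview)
Your proposal is correct and matches the paper's approach exactly: the paper does not give an independent proof of this proposition but simply presents it as the formal dual of \cref{prop:torsionpair,prop.cotorsion_from_tilting}, noting that torsion and cotorsion pairs are self-dual structures. Your careful dictionary between $\mcA$ and $\mcA^{\op}$ is precisely the ``dual proof'' the paper alludes to.
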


As expected, by \emph{torsion cotorsion triple} in an abelian category $\mcA$ we mean a triple of subcategories $(\mcT, \mcF, \mcD)$ where $(\mcT, \mcF)$ is a torsion pair and $(\mcF, \mcD)$ is a cotorsion pair.

\begin{thm}[Dual of \cref{thm.ctt}]
    \label{thm:tct}
    Let $(\mcT, \mcF, \mcD)$ be a torsion cotorsion triple in an abelian category.  Then $\tors\colon \mcA \to \mcT$ and $\widetilde\ctf\colon \mcA \to \frac{\mcD}{\mcD\cap\mcF}$ induce mutually inverse equivalences
    \[ \mcT \simeq \frac{\mcD}{\mcD \cap \mcF}.\]
\end{thm}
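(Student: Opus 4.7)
The plan is to dualize the proof of \cref{thm.ctt} by swapping the roles of the cotorsion and torsion pairs. The cotorsion pair $(\mcF,\mcD)$ provides the short exact sequences $A \into \widetilde\ctf A \onto \widetilde\ctt A$ with $\widetilde\ctf A \in \mcD$ and $\widetilde\ctt A \in \mcF$, and by the dual of \cref{lemma:cotors_functors_cd} these assemble into a functor $\widetilde\ctf\colon \mcA \to \frac{\mcD}{\mcD\cap\mcF}$. The torsion pair $(\mcT,\mcF)$ provides a genuine functor $\tors\colon \mcA\to \mcT$ by \cref{rmk:tp}, which vanishes on $\mcF \supseteq \mcD \cap \mcF$, so $\tors|_\mcD$ passes to the quotient to give $\overline{\tors|_\mcD}\colon \frac{\mcD}{\mcD \cap \mcF}\to \mcT$. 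I will show that $\widetilde\ctf|_\mcT$ and $\overline{\tors|_\mcD}$ are mutually inverse.

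For $\overline{\tors|_\mcD}\circ \widetilde\ctf|_\mcT \cong \id_\mcT$, fix $T\in\mcT$ and use the sequence $T \into \widetilde\ctf T \onto \widetilde\ctt T$. The monomorphism gives $T \subseteq \tors(\widetilde\ctf T)$, and the quotient $\tors(\widetilde\ctf T)/T$ embeds into $\widetilde\ctt T \in \mcF$, hence lies in $\mcF$ since $\mcF$ is closed under subobjects; simultaneously it is a quotient of $\tors(\widetilde\ctf T)\in \mcT$, so it lies in $\mcT$. The torsion pair axiom $\Hom_\mcA(\mcT,\mcF)=0$ forces $\mcT\cap\mcF = 0$, hence $T \cong \tors(\widetilde\ctf T)$. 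Naturality is automatic from the functoriality of $\tors$ and of $\widetilde\ctf$, since the lifting square defining $\widetilde\ctf g$ for a morphism $g\colon T\to T'$ in $\mcT$ restricts to $g$ on $T$. For the reverse composition $\widetilde\ctf|_\mcT \circ \overline{\tors|_\mcD} \cong \id_{\frac{\mcD}{\mcD\cap\mcF}}$, the key observation is that for $D\in\mcD$ the torsion pair sequence $\tors D \into D \onto \tfree D$ has middle term $D \in \mcD$ and right term $\tfree D \in \mcF$, hence is itself a valid second cotorsion sequence for $\tors D$. Thus $D$ already represents $\widetilde\ctf(\tors D)$ in $\frac{\mcD}{\mcD\cap\mcF}$, and any $k\colon D\to D'$ in $\mcD$ is its own lift, coinciding with $\widetilde\ctf(\tors k)$ modulo $\mcD\cap\mcF$ by the uniqueness built into the dual of \cref{lemma:cotors_functors_cd}.

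The expected main obstacle is purely bookkeeping: ensuring the dual formulation of \cref{lemma:cotors_functors_cd} is invoked correctly, so that uniqueness of lifts modulo $\mcD\cap\mcF$ holds as needed and all the naturality squares really commute. The conceptual core is the transparent identification $T = \tors(\widetilde\ctf T)$ via $\mcT\cap\mcF = 0$; everything else is formal duality with the argument already given for \cref{thm.ctt}.
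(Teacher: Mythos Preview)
Your proposal is correct and follows essentially the same route as the paper's proof of \cref{thm.ctt}, dualized: you set up $\overline{\tors|_\mcD}$ and $\widetilde\ctf|_\mcT$ exactly as the paper sets up $\overline{\tfree|_\mcC}$ and $\ctt|_\mcF$, and both compositions are verified by recognizing that the cotorsion sequence $T\into\widetilde\ctf T\onto\widetilde\ctt T$ (for $T\in\mcT$) and the torsion sequence $\tors D\into D\onto\tfree D$ (for $D\in\mcD$) each simultaneously serve as the other type of decomposition. The only cosmetic difference is that where the paper would simply invoke uniqueness of the torsion decomposition to conclude $\tors(\widetilde\ctf T)=T$, you spell this out via the observation $\mcT\cap\mcF=0$; this is a valid and transparent way to say the same thing.
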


\begin{thm}[Dual of \cref{thm.ctt=tilting,cor:equiv_ctt}]
    \label{thm.tct_from_tilting}
    Let $\mcA$ be an abelian category with enough injectives.  Cotilting subcategories $\mbC$ correspond bijectively to torsion cotorsion triples $(\mcT,\mcF,\mcD)$ by
    \begin{align*}
        \mbC &\mapsto \big( {}^\perp\mbC, \Sub \mbC, \{ X \in \mbC^{\perp_1} \mid \idim X \leq 1 \} \big) \\
        \mcF\cap\mcD &\mathrel{\reflectbox{$\mapsto$}} (\mcT,\mcF,\mcD),
    \end{align*}
    and for any cotilting subcategory $\mbC$, there is an equivalence
    \[
        \frac{\{X \in \mbC^{\perp_1} \mid \idim X \leq 1 \}}{\mbC} \simeq {}^\perp\mbC.
    \]
\end{thm}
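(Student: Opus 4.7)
The plan is to deduce this result directly from its non-dual counterparts \cref{thm.ctt=tilting,cor:equiv_ctt} by passing to the opposite category. The entire section has been developed as a chain of implications starting from cotorsion pairs and tilting subcategories in an abelian category with enough projectives; since torsion pairs and cotorsion pairs are self-dual structures, and since $\mcA^\op$ has enough projectives exactly when $\mcA$ has enough injectives, nothing new needs to be proved here beyond careful bookkeeping of the translation.

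First I would spell out the dictionary. A subcategory $\mbC \subseteq \mcA$ is cotilting in $\mcA$ precisely when it is tilting in $\mcA^\op$, which follows immediately from comparing the three conditions defining cotilting subcategories with those defining tilting subcategories; in particular, $\idim$ in $\mcA$ becomes $\pdim$ in $\mcA^\op$, covariant finiteness in $\mcA$ becomes contravariant finiteness in $\mcA^\op$, and the existence of two-term cotilting resolutions of injectives becomes the existence of two-term tilting coresolutions of projectives. Next I would note the self-duality of (co)torsion pairs: $(\mcT,\mcF)$ is a torsion pair in $\mcA$ if and only if $(\mcF,\mcT)$ is a torsion pair in $\mcA^\op$, and similarly for cotorsion pairs. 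Consequently, $(\mcT,\mcF,\mcD)$ is a torsion cotorsion triple in $\mcA$ if and only if $(\mcD,\mcF,\mcT)$ is a cotorsion torsion triple in $\mcA^\op$.

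Then I would simply apply \cref{thm.ctt=tilting} to $\mcA^\op$. The bijection sends a tilting subcategory $\mbT \subseteq \mcA^\op$ to the cotorsion torsion triple
\[
\big(\{X \in {}^{\perp_1}\mbT \mid \pdim X \leq 1\},\; \Fac \mbT,\; \mbT^\perp\big)
\]
in $\mcA^\op$. Taking $\mbT = \mbC$ and translating each constituent back to $\mcA$ using that $\Fac$ in $\mcA^\op$ is $\Sub$ in $\mcA$, that left and right $\perp$'s swap under opposition, and that the projective dimension in $\mcA^\op$ is the injective dimension in $\mcA$, this cotorsion torsion triple in $\mcA^\op$ corresponds to the torsion cotorsion triple
\[
\big({}^\perp\mbC,\; \Sub\mbC,\; \{X \in \mbC^{\perp_1} \mid \idim X \leq 1\}\big)
\]
in $\mcA$, which is exactly the formula in the statement. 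The inverse direction $(\mcT,\mcF,\mcD) \mapsto \mcF \cap \mcD$ comes from the inverse $(\mcC,\mcT,\mcF) \mapsto \mcC \cap \mcT$ in \cref{thm.ctt=tilting}, since the middle term is preserved and intersections are self-dual.

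Finally, the equivalence follows from \cref{cor:equiv_ctt} applied to $\mcA^\op$: that corollary gives
\[
\frac{\{X \in {}^{\perp_1}\mbT \mid \pdim X \leq 1\}}{\mbT} \simeq \mbT^\perp
\]
in $\mcA^\op$, and the additive quotient is self-dual, so taking opposites on both sides yields the desired equivalence in $\mcA$. I do not anticipate any genuine obstacle, as the only real content is the verification that the definition of cotilting is the literal opposite of the definition of tilting and that the $\perp$'s translate correctly; the deeper structural arguments have already been carried out in the non-dual setting. If one preferred not to invoke duality explicitly, one could instead repeat verbatim the proofs of \cref{prop:torsionpair,prop.cotorsion_from_tilting,prop.cct_to_tilt,thm.ctt,thm.ctt=tilting,cor:equiv_ctt}, reversing every arrow and swapping $\Fac$ with $\Sub$, projective with injective, and $\tors/\tfree$ with $\widetilde\ctf/\widetilde\ctt$, but this would merely duplicate work already done.
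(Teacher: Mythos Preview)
Your proposal is correct and matches the paper's approach: the paper presents this theorem in a subsection titled ``Summary of dual statements'' and explicitly says the results there ``have dual proofs'', giving no argument beyond pointing to the non-dual versions \cref{thm.ctt=tilting,cor:equiv_ctt}. Your careful bookkeeping of the duality dictionary is more than the paper writes down, but the underlying idea is identical.
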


\subsection{Artin algebras}

In this subsection, we specialize to the case that our abelian category $\mcA$ is the category $\mod \Lambda$ of finitely generated modules over some Artin $R$-algebra $\Lambda$.
The main advantage over the more general setup discussed previously is the presence of Auslander--Reiten theory.  None of the results of this section will be used in the remaining parts of the paper, but they give a nice complement to \cref{thm.ctt} and explain why the AR-quivers in \cref{fig:compare-ar-quivers} agree.  Throughout this section we assume familiarity with the basic concepts of Auslander--Reiten theory; see \cite{MR1476671} for an introduction.

Recall that the duality $D\colon \mod\Lambda \to \mod\Lambda^\op$ is defined as $D = \Hom_R(-, E)$, where $E$ is an injective envelope of $R/\operatorname{rad} R$.

Using Auslander--Reiten theory in $\mod \Lambda$ we obtain a version of \cref{thm.ctt}:

\begin{thm} \label{thm.tau_equivalence}
    Let $\Lambda$ be an Artin algebra, and  $(\mcC, \mcT, \mcF)$ a cotorsion torsion triple in $\mod \Lambda$. Then the Auslander--Reiten translation $\tau$ defines an equivalence
    \[
        \frac{\mcC}{\proj \Lambda} \overset{\simeq}{\to} \mcF.
    \]
\end{thm}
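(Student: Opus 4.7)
The plan is to argue that $\tau$ itself furnishes the equivalence. By \cref{lem.pd=fac}, every $C \in \mcC$ has $\pdim C \leq 1$, so for $C$ without projective summands $\tau C$ is represented by the honest module $\ker(\nu P_1 \to \nu P_0)$ built from a minimal projective resolution $0 \to P_1 \to P_0 \to C \to 0$. Dualizing this resolution via the Nakayama functor $\nu$ yields the Auslander--Reiten formula $\Hom_\Lambda(M, \tau C) \cong D\Ext^1_\Lambda(C, M)$ as an honest isomorphism of abelian groups (not merely up to factoring through injectives). Setting $M = T \in \mcT$, the cotorsion vanishing $\Ext^1(\mcC, \mcT) = 0$ forces $\Hom(T, \tau C) = 0$, so $\tau C \in \mcT^\perp = \mcF$; since $\tau$ annihilates projectives, this produces a candidate functor $\bar\tau \colon \mcC/\proj \Lambda \to \mcF$.

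The crucial observation underlying the rest of the argument is that injectives always lie in the right half of a cotorsion pair, so $\inj \Lambda \subseteq \mcT$, which combined with $\Hom(\mcT, \mcF) = 0$ gives $\Hom(I, F) = 0$ for every injective $I$ and every $F \in \mcF$. Consequently $\overline{\Hom}(-, F) = \Hom(-, F)$ on $\mcF$, so $\mcF$ embeds fully faithfully into $\overline{\mod}\Lambda$. Combined with the classical AR equivalence $\tau \colon \underline{\mod}\Lambda \simeq \overline{\mod}\Lambda$, full faithfulness of $\bar\tau$ is then immediate:
\[
    \Hom_{\mcC/\proj \Lambda}(C, C') = \underline{\Hom}(C, C') \cong \overline{\Hom}(\tau C, \tau C') = \Hom_{\mcF}(\tau C, \tau C').
\]

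For essential surjectivity, fix $F \in \mcF$, pick a representative $M$ of $\tau^{-1}[F] \in \underline{\mod}\Lambda$ with no projective summands, and apply the general (stable) Auslander--Reiten formula $D\Ext^1_\Lambda(X, Y) \cong \overline{\Hom}(Y, \tau X)$: for every $T \in \mcT$,
\[
    D\Ext^1_\Lambda(M, T) \cong \overline{\Hom}(T, \tau M) \cong \overline{\Hom}(T, F) = \Hom(T, F) = 0,
\]
using $\tau M \cong F$ in $\overline{\mod}\Lambda$ and the preceding paragraph. Hence $M \in {}^{\perp_1}\mcT = \mcC$. Since $F$ has no injective summands (as $\mcF \cap \inj \Lambda = 0$) and $\tau M$ has none either (by minimality of the defining presentation), the stable isomorphism $\tau M \cong F$ upgrades to an honest isomorphism in $\mcF$. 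The main obstacle in executing this plan is precisely the reconciliation between the stable categories---where $\tau$ naturally lives---and the module-level subcategories $\mcC$ and $\mcF$; the pivotal observation that $\inj \Lambda \subseteq \mcT$ is what makes everything come together.
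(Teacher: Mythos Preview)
Your proof is correct and follows essentially the same route as the paper: restrict the classical Auslander--Reiten equivalence $\underline{\mod}\Lambda \simeq \overline{\mod}\Lambda$ to the relevant subcategories, using $\inj\Lambda \subseteq \mcT$ to see that $\mcF$ sits fully inside $\overline{\mod}\Lambda$, and then invoke Auslander--Reiten duality to match $\mcC$ with $\mcF$. The only noteworthy difference is in how the passage between $\overline{\Hom}(\mcT,\tau M)$ and $\Hom(\mcT,\tau M)$ is handled: the paper uses that $\mcT$ is closed under quotients (so a nonzero map $T\to\tau M$ yields the monomorphism $\im f\hookrightarrow\tau M$ with $\im f\in\mcT$, which cannot factor through an injective), whereas you instead exploit $\pdim\mcC\leq 1$ to get the unstabilized formula $\Hom(-,\tau C)\cong D\Ext^1(C,-)$ for the forward direction and reuse the $\inj\Lambda\subseteq\mcT$ observation for essential surjectivity.
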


\begin{proof}
    By general Auslander--Reiten theory it is known that $\tau$ defines an equivalence
    \[
        \frac{\mod \Lambda}{\proj \Lambda} \overset{\simeq}{\to} \frac{\mod \Lambda}{\inj \Lambda}.
    \]
    Clearly $\frac{\mcC}{\proj \Lambda}$ is a full subcategory of the left hand side, and we observe that $\mcF$ is a full subcategory of the right hand side: Since $\inj \Lambda \subseteq \mcT$ we have $\Hom_{\Lambda}(\inj \Lambda, \mcF) = 0$, and thus no non-zero maps between objects in $\mcF$ factor through injective modules.

    Therefore it suffices to check that a module $M$ lies in $\mcC$ if and only if $\tau M$ lies in~$\mcF$. We observe that
    \[
        M \in \mcC \Longleftrightarrow \Ext_{\Lambda}^1(M, \mcT) = 0 \Longleftrightarrow \Hom_\Lambda(\mcT, \tau M) = 0 \Longleftrightarrow \tau M \in \mcF,
    \]
    where the middle equivalence follows from Auslander--Reiten duality: A priori Auslander--Reiten duality says that $\Ext_{\Lambda}^1(M, \mcT) = D \Hom_{\frac{\mod \Lambda}{\inj \Lambda}}(\mcT, \tau M)$, but since $\mcT$ is closed under quotients one sees that if there is a non-zero map from $\mcT$ to $\tau M$ then the inclusion of its image is also a map from $\mcT$ to $\tau M$ and is non-zero even in the quotient category modulo injectives.
\end{proof}

\begin{thm}
    \label{thm.equ_tau_commute}
    Let $\Lambda$ be an Artin algebra, and $(\mcC, \mcT, \mcF)$ a cotorsion torsion triple in $\mod \Lambda$. Then $\mcC$ has almost split sequences, and the triangle of functors
    \[
        \begin{tikzcd}[row sep=1mm,column sep=2cm]
            \frac{\mcC}{\proj \Lambda} \ar[rd,"\tau_{\Lambda}"] \ar[dd,"\tau_{\mcC}" swap] & \\
            & \mcF \\
            \frac{\mcC}{\mcC \cap \mcT} \ar[ru,"\tfree" swap]
        \end{tikzcd}
    \]
    commutes, where the functors to the right are the equivalences of \cref{thm.tau_equivalence,thm.ctt}, and the vertical functor is induced by the internal Auslander--Reiten translation in $\mcC$.

    In particular, $\tau_\mcC\colon \frac{\mcC}{\proj \Lambda} \to \frac{\mcC}{\mcC \cap \mcT}$ is an equivalence.
\end{thm}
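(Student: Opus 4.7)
The plan is to deduce both conclusions from a direct construction of AR sequences in $\mcC$ built out of those in $\mod\Lambda$. First, I verify that $\mcC$ has almost split sequences. The defining sequence $\ctf A \into \ctt A \onto A$ of the cotorsion pair exhibits $\ctt A \to A$ as a right $\mcC$-approximation, since $\Ext_\Lambda^1(X,\ctf A) = 0$ for $X \in \mcC$; hence $\mcC$ is contravariantly finite and extension-closed in $\mod\Lambda$, and the Auslander--Smal\o{} theorem supplies almost split sequences ending at every indecomposable non-Ext-projective object. I then identify the Ext-projectives of $\mcC$ as $\proj\Lambda$: using $\pdim X \leq 1$ from \cref{lem.pd=fac}, a projective resolution $0 \to P_1 \to P_0 \to X \to 0$ of any Ext-projective $X \in \mcC$ must split. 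Dually, the Ext-injectives are precisely $\mcC \cap \mcT$, immediately from $\mcC = {}^{\perp_1}\mcT$. This validates the source and target of $\tau_\mcC$.

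For the commutativity, it suffices to show $\tau_\mcC M \cong \ctt(\tau_\Lambda M)$ in $\mcC/\mcC\cap\mcT$ for indecomposable non-projective $M \in \mcC$; applying $\tfree$ and invoking the equivalence of \cref{thm.ctt} together with $\ctf\tau_\Lambda M \in \mcT$ then gives $\tfree\,\tau_\mcC M \cong \tau_\Lambda M$, which is exactly the commutativity of the triangle. Starting from the AR sequence $(b)\colon 0 \to \tau_\Lambda M \to E \to M \to 0$ in $\mod\Lambda$, the long exact sequence for $\Ext_\Lambda(M,-)$ applied to $0 \to \ctf\tau_\Lambda M \to \ctt\tau_\Lambda M \xrightarrow{\pi} \tau_\Lambda M \to 0$ shows that the lifting obstruction lives in $\Ext_\Lambda^2(M, \ctf\tau_\Lambda M)$, which vanishes by $\pdim M \leq 1$. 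Hence $(b)$ is the pushout along $\pi$ of some extension $(c)\colon 0 \to \ctt\tau_\Lambda M \to \tilde E \to M \to 0$, and $\tilde E \in \mcC$ by extension-closedness.

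The main task is to verify that $(c)$ is right almost split in $\mcC$. The pushout square produces a short exact sequence $0 \to \ctf\tau_\Lambda M \to \tilde E \xrightarrow{p} E \to 0$. Given any morphism $g\colon X \to M$ with $X \in \mcC$ that is not a split epimorphism, the AR property of $(b)$ supplies a factorization $h\colon X \to E$ with $\beta h = g$; this then lifts along $p$ to $\tilde h\colon X \to \tilde E$ because the obstruction $\Ext_\Lambda^1(X, \ctf\tau_\Lambda M)$ vanishes ($X \in \mcC$, $\ctf\tau_\Lambda M \in \mcT$), and then $\tilde\beta\tilde h = \beta p\tilde h = g$. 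Non-splitness of $(c)$ follows from non-splitness of $(b)$, as pushouts of split sequences split. Thus $(c)$ is right almost split ending at $M$ with left term $\ctt\tau_\Lambda M \in \mcC$, and modulo Ext-injectives (i.e., in $\mcC/\mcC \cap \mcT$) this forces $\tau_\mcC M \cong \ctt\tau_\Lambda M$. The main obstacle throughout is ensuring the two $\Ext$-vanishings $\Ext^1(X,\ctf\tau_\Lambda M) = 0$ and $\Ext^2(M,\ctf\tau_\Lambda M) = 0$ line up correctly with the AR-theoretic constructions, but both are immediate consequences of the cotorsion torsion triple structure. Finally, the "in particular" is automatic: since $\tfree$ and $\tau_\Lambda$ are equivalences by \cref{thm.ctt,thm.tau_equivalence}, the commuting triangle forces $\tau_\mcC$ to be an equivalence as well.
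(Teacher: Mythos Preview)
Your argument follows essentially the same route as the paper's: lift the ambient AR sequence along the cotorsion approximation using $\Ext_\Lambda^2(M,\ctf\tau_\Lambda M)=0$, then verify the lifted sequence is right almost split in $\mcC$ via $\Ext_\Lambda^1(\mcC,\ctf\tau_\Lambda M)=0$. Your explicit identification of the Ext-projectives and Ext-injectives of $\mcC$ is a nice addition that the paper replaces by a citation.

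One step needs a sentence more. From ``$(c)$ is right almost split'' you conclude $\tau_\mcC M \cong \ctt\tau_\Lambda M$ in $\mcC/(\mcC\cap\mcT)$, but a non-minimal right almost split sequence only gives $\ctt\tau_\Lambda M \cong \tau_\mcC M \oplus Z$ in $\mcC$ for some $Z$, and you have not said why $Z \in \mcC\cap\mcT$. The paper closes this by noting that $\ctt\tau_\Lambda M$ is \emph{indecomposable} in $\mcC/(\mcC\cap\mcT)$, since $\tau_\Lambda M$ is indecomposable in $\mcF$ and $\ctt\colon\mcF\to\mcC/(\mcC\cap\mcT)$ is an equivalence; combined with $\tau_\mcC M \notin \mcC\cap\mcT$ (being the left end of a non-split sequence, it is not Ext-injective), this forces $Z$ to vanish in the quotient. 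With that line inserted, your proof is complete and matches the paper's.
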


\begin{proof}
    The category $\mcC$ has almost split sequences by \cite[Corollary~2.9]{MR2009441}. Note that the of cotorsion pairs in \cite{MR2009441} is slightly stronger than ours: all extensions from the first to the second category are required to vanish, not just first extensions. However, given a cotorsion torsion triple, the cotorsion pair is actually a cotorsion pair in the sense of \cite{MR2009441}, since $\mcT$ is closed under cosyzygies.

    The following construction of almost split sequences in $\mcC$ is very similar to \cite[Corollary~3.5]{MR617088}:
    Let $C \in \mcC$ indecomposable and not projective. Consider the almost split sequence in $\mod \Lambda$ ending in $C$, as in the top row of the following diagram.
    \[
        \begin{tikzcd}
            \tau C \ar[r,right hook->] & E \ar[r,->>] & C \ar[d,equal] \\
            \ctt (\tau C) \ar[u,->>] \ar[r,right hook->,densely dashed] & F \ar[r,->>,densely dashed] \ar[u,->>,densely dashed] & C \\
            \ctf (\tau C) \ar[u,right hook->] \ar[r,equal] & \ctf(\tau C) \ar[u,right hook->,densely dashed]
        \end{tikzcd}
    \]
    The left vertical short exact sequence comes from the definition of cotorsion pair, see \cref{def.cotorsion}. Since $\Ext_{\Lambda}^2\big(C, \ctf(\tau C)\big) \subseteq \Ext_{\Lambda}^2( \mcC, \mcT ) = 0$, we can complete the diagram as indicated with the dashed arrows above.

    Since $E \to C$ is right almost split in $\mod \Lambda$, and $\Ext_\Lambda^1\big(\mcC, \ctf(\tau C)\big) = 0$, it follows that $F \to C$ is right almost split in $\mcC$. Moreover, note that $\ctt( \tau C)$ is indecomposable up to summands in $\mcC \cap \mcT$ by the two equivalences to $\mcF$ that we already have established. Thus the middle horizontal sequence is the almost split sequence in $\mcC$ ending in $C$, up to possible superfluous summands in $\mcC \cap \mcT$. In particular $\tau_{\mcC} = \ctt \tau_{\Lambda}$ as functors to $\frac{\mcC}{\mcC \cap \mcT}$.  Since $\ctt\tfree = \id_\mcF$, we have established the commutativity of the triangle.
\end{proof}

\begin{cor}
    \label{cor.commutes_with_tau}
    The equivalences of \cref{thm.equ_tau_commute} commute with the Auslander--Reiten translation in the respective subcategories of $\mod \Lambda$.
\end{cor}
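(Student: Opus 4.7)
The strategy is to exploit the fact that in a Krull--Schmidt Hom-finite category admitting almost split sequences, the Auslander--Reiten translation is determined intrinsically by the categorical structure (it assigns to an indecomposable non-projective object the left endpoint of the almost split sequence ending there). Hence any equivalence between two such categories must commute with $\tau$ up to natural isomorphism, and the corollary should follow essentially formally from \cref{thm.equ_tau_commute} once almost split sequences are exhibited in all three categories involved.

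The one category for which the existence of almost split sequences still requires comment is $\mcF$. Since $(\mcT, \mcF)$ is a torsion pair, $\mcF$ is closed under subobjects and extensions, and thus by the dual of \cite[Corollary~3.7]{MR617088} it inherits almost split sequences from $\mod \Lambda$. Write $\tau_\mcF$ for the resulting Auslander--Reiten translation.

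For the equivalence $\tau_\Lambda \colon \frac{\mcC}{\proj \Lambda} \simeq \mcF$ of \cref{thm.tau_equivalence}, I would then verify directly that it sends the almost split sequence $\tau_\mcC C \into E \onto C$ in $\mcC$ --- constructed in the proof of \cref{thm.equ_tau_commute} as a pushout of the AR sequence $\tau_\Lambda C \into E_\Lambda \onto C$ in $\mod \Lambda$ along the cotorsion short exact sequence for $\tau_\Lambda C$ --- to an almost split sequence in $\mcF$ ending at $\tau_\Lambda C$. By uniqueness of the left endpoint of such a sequence, this forces $\tau_\mcF(\tau_\Lambda C) \cong \tau_\Lambda(\tau_\mcC C)$, i.e., $\tau_\mcF \circ \tau_\Lambda \cong \tau_\Lambda \circ \tau_\mcC$. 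The analogous statement for the equivalence $\tfree \colon \frac{\mcC}{\mcC \cap \mcT} \simeq \mcF$ then follows from the commuting triangle $\tau_\Lambda = \tfree \circ \tau_\mcC$.

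The main potential obstacle is giving a precise intrinsic characterization of $\tau$ in the quotient categories $\frac{\mcC}{\proj \Lambda}$ and $\frac{\mcC}{\mcC \cap \mcT}$, which are not themselves abelian; one must lift almost split sequences to $\mcC$ before invoking the uniqueness principle. This is standard, and the bulk of the work has already been performed in \cref{thm.equ_tau_commute}, so I expect the proof to be very short.
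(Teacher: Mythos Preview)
Your overall plan --- reduce to showing one of the three equivalences commutes with $\tau$ and deduce the rest from the commuting triangle --- is the same as the paper's. But the step you leave as ``verify directly that $\tau_\Lambda$ sends the almost split sequence $\tau_\mcC C \into E \onto C$ in $\mcC$ to an almost split sequence in $\mcF$ ending at $\tau_\Lambda C$'' is where all the content lies, and your sketch gives no indication how to carry it out. The functor $\tau_\Lambda$ is only defined on the projectively stable category; applied termwise to a short exact sequence in $\mcC$ it does not produce a short exact sequence, let alone one in $\mcF$. The principle that equivalences preserve intrinsic structure fails here precisely because almost split sequences live in the exact categories $\mcC$ and $\mcF$, whereas the equivalences in question compare only additive quotients of $\mcC$ with $\mcF$. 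You acknowledge this obstacle in your final paragraph but then dismiss it as standard; it is not.

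The paper instead works with inverse translates and invokes the Auslander--Smal{\o} description of the relative translate in an extension- and subobject-closed subcategory. From the triangle one reads off $\tau_\Lambda^- = \tau_\mcC^-\,\ctt$. For an indecomposable $F \in \mcF$, \cite[Proposition~3.3]{MR617088} gives $\tau_\mcF^- F = 0$ if and only if $\tfree(\tau_\Lambda^- F) = 0$, and \cite[Corollary~3.5]{MR617088} gives that otherwise $\tau_\mcF^- F$ is a direct summand of $\tfree(\tau_\Lambda^- F) = \tfree\big(\tau_\mcC^-(\ctt F)\big)$. Since $\ctt$, $\tau_\mcC^-$, and $\tfree$ are equivalences, the latter is indecomposable, so the summand is the whole thing. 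This yields $\tau_\mcF^- = \tfree\,\tau_\mcC^-\,\ctt$, i.e., $\tfree$ commutes with the relative Auslander--Reiten translations, and the commutation for $\tau_\Lambda$ follows from the triangle as you say.
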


\begin{proof}
    Firstly, $\mcF$ has almost split sequences by \cite[Corollary~3.8]{MR617088}.  Since the diagram in \cref{thm.equ_tau_commute} commutes and $\tau_\mcC$ clearly commutes with itself, it is sufficient to show that $\tfree$ commutes with the Auslander--Reiten translation, that is, $\tfree\tau_\mcC = \tau_\mcF\tfree$.  We have already established that $\tau_\Lambda^- = \tau_\mcC^-\ctt$.  Fix an indecomposable $F\in\mcF$.
    By \cite[Proposition~3.3]{MR617088}, $\tau_\mcF^- F = 0$ if and only if $\tfree (\tau_{\Lambda}^- F) = 0$, that is, if and only if $\tfree\big(\tau_\mcC^-(\ctt F)\big) = 0$.  If it is not relative injective, $\tau_{\mcF}^- F$ is a direct summand of $\tfree\big(\tau_\mcC^-(\ctt F)\big) = \tfree (\tau_{\Lambda}^- F)$, by \cite[Corollary~3.5]{MR617088}.
    Thus, if $\tau_\mcF^- F \neq 0$, we have $\tau_\mcF^- F = \tfree\big(\tau_\mcC^-(\ctt F)\big)$, since the latter is indecomposable.  It follows that $\tau_\mcF^- = \tfree \tau_\mcC^- \ctt$.
\end{proof}

\section{Applications of cotorsion torsion triples to quiver representations}
\label{sec:applicationquivers}

Throughout this section, we consider the following special case, where $\mcA$ is an abelian category with enough projectives $\Proj\mcA\subseteq\mcA$, and $Q$ a finite acyclic quiver.  (For the dual results we instead assume that $\mcA$ has enough injectives $\Inj\mcA\subseteq\mcA$.)  We denote by $[Q, \mcA]$ the category of (covariant) representations of $Q$ in $\mcA$, or, equivalently, the functor category from the free category generated by $Q$ to $\mcA$.  This category inherits a lot of structure from $\mcA$; in particular it is abelian.  In this context, we can produce particular families of objects of $[Q,\mcA]$:

\begin{defn}
    For each vertex $i$ of $Q$ we have the evaluation functor
    \[
        -_i \colon [Q, \mcA] \to \mcA, \quad X \mapsto X_i
    \]
    sending each representation to its value on this vertex.  The functor $-_i$ is clearly exact, and moreover has an exact left adjoint
    \[
        \sfP_i \colon \mcA \to [Q, \mcA]
    \]
    where, for each vertex $j$ we have
    \[
        \sfP_i(A)_j = \bigoplus_{\rho \colon i \leadsto j} A^{(\rho)}.
    \]
Here the superscript indices are just used to distinguish the otherwise identical summands $A^{(\rho)}$, in order to refer to the individual summands by their index later.
The structure maps of $\sfP_i(A)$ are as follows: For any arrow $\alpha \colon j \to k$ in the quiver,
    \[
        \sfP_i(A)_j \into \sfP_i(A)_k
    \]
is uniquely determined by sending any summand $A^{(\rho)}$  corresponding to a path $\rho \colon i \leadsto j$ identically to the summand $A^{(\alpha \rho)}$.

Any arrow $\alpha \colon i \shortto j$ in $Q$ gives rise to a natural monomorphism of functors
\[ \sfP_{\alpha} \colon \sfP_j \into \sfP_i, \]
where $\sfP_{\alpha}(A)_k\colon \sfP_j(A)_k \into \sfP_i(A)_k$ is given by sending any summand $A^{(\rho)}$  corresponding to a path $\rho \colon j \leadsto k$ identically to $A^{(\rho \alpha)}$.

Dually to the construction of $\sfP_i$, the evaluation functor $-_i$ has an exact right adjoint denoted by $\sfI_i$, which is explicitly given on vertices by
    \[
        \sfI_i(A)_j = \bigoplus_{\rho \colon j \leadsto i} A^{(\rho)}.
    \]
\end{defn}

For a more detailed discussion of these functors, along with a proof that~$\sfP_i$ and~$\sfI_i$ are adjoint to $-_i$, see \cite[Section~3]{MR3990178}.

\begin{remark}
    \label{rmk:presproj}
    As $\sfI_i$ and $\sfP_i$ are adjoint to the exact functor $-_i$, it is a standard fact that $\sfI_i$ preserves injectives and $\sfP_i$ preserves projectives; that is, for $I$ injective and $P$ projective in $\mcA$, $\sfI_i(I)$ is injective and $\sfP_i(P)$ is projective in $[Q, \mcA]$.

    This can be seen by considering $\sfP_i(P)$, where $P$ is projective in $\mcA$, and letting $f\colon X \onto Y$ be an arbitrary epimorphism in $[Q,\mcA]$.  Via the adjunction, $ \Hom_{[Q,\mcA]}\big(\sfP_i(P), f\big)$ equals the composition
    \[
        \Hom_{[Q,\mcA]}\big(\sfP_i(P),X\big) \isom \Hom_\mcA(P,X_i) \stackrel{f_i\circ -}\onto \Hom_\mcA(P,Y_i) \isom \Hom_{[Q,\mcA]}\big(\sfP_i(P),Y\big),
    \]
    and is therefore an epimorphism.  Thus $\sfP_i(P)$ is a projective.  Dually, $\sfI_i(I)$ is injective.
\end{remark}

\begin{remark}
    \label{rmk:enoughproj}
    As soon as $\mcA$ has enough projectives, the same is true for $[Q,\mcA]$.  This is shown in \cite[Corollary~3.10]{MR3990178} for an arbitrary quiver $Q$ and $\mcA$ admitting small coproducts.  We give a short argument for our more restrictive setting.

    Let $X\in[Q,\mcA]$ be any object.  For any vertex $i$ in~$Q$, the identity on $X_i$ is adjunct to some morphism $\sfP_i(X_i) \to X$ in $[Q,\mcA]$.  This morphism is moreover the identity at the given vertex $i$, since the unit $\eta\colon \id_\mcA \to -_i\circ\sfP_i$ is the identity, as there is a unique path from $i$ to $i$.
Thus we have a canonical epimorphism
    \[
        p \colon \bigoplus_{i\in Q_0} \sfP_i(X_i) \onto X.
    \]

    For each $X_i$, choose an epimorphism from a projective $P^i \onto X_i$.  Since each $\sfP_i$ preserves projectives and is (right) exact, we get
    \[
        \bigoplus_{i\in Q_0} \sfP_i(P^i) \onto \bigoplus_{i\in Q_0} \sfP_i(X_i) \onto X,
    \]
    which is an epimorphism from a projective object onto $X$.  Thus $[Q,\mcA]$ has enough projectives.
\end{remark}

We use this notation in the proof of the following lemma, which shows that the ``standard'' projectives give rise to all projectives.

\begin{lemma} \label{lemma:allproj}
    The subcategory of projective objects in $[Q,\mcA]$ is
    \[
        \Proj\, [Q,\mcA] = \add \{ \sfP_i(P) \mid i \in Q_0, P \in \Proj \mcA \}.
    \]
\end{lemma}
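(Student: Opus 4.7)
The plan is to establish the equality by proving two inclusions, both of which follow quickly from the preceding remarks.

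For the inclusion $\supseteq$, I would appeal directly to \cref{rmk:presproj}, which states that $\sfP_i$ preserves projectives: if $P \in \Proj \mcA$ then $\sfP_i(P) \in \Proj\,[Q,\mcA]$. Since the class of projective objects in any abelian category is closed under finite direct sums and direct summands, the additive closure $\add \{ \sfP_i(P) \mid i \in Q_0,\, P \in \Proj \mcA \}$ is contained in $\Proj\,[Q,\mcA]$.

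For the converse inclusion $\subseteq$, the key idea is to reuse the construction from \cref{rmk:enoughproj}: given any object $X \in [Q,\mcA]$, and choosing for each vertex $i$ an epimorphism $P^i \onto X_i$ in $\mcA$ with $P^i \in \Proj\mcA$, one obtains a canonical epimorphism
\[
    \bigoplus_{i \in Q_0} \sfP_i(P^i) \onto X
\]
whose source lies in $\add \{ \sfP_i(P) \mid i \in Q_0,\, P \in \Proj \mcA \}$. If $X$ is assumed to be projective in $[Q,\mcA]$, then this epimorphism splits by definition of projectivity, exhibiting $X$ as a direct summand of an object in the additive closure. Hence $X$ itself lies in $\add \{ \sfP_i(P) \mid i \in Q_0,\, P \in \Proj \mcA \}$.

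There is no real obstacle here: both inclusions amount to routine applications of facts already assembled in \cref{rmk:presproj,rmk:enoughproj}. The only subtlety worth mentioning is that $Q_0$ is finite (as $Q$ is a finite quiver), so the direct sum $\bigoplus_{i \in Q_0} \sfP_i(P^i)$ is a finite direct sum and genuinely lies in the additive closure without needing any assumption on existence of infinite coproducts in $\mcA$.
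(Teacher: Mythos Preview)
Your proposal is correct and follows essentially the same approach as the paper's proof: both inclusions are established via \cref{rmk:presproj} and \cref{rmk:enoughproj}, with the converse direction using that the canonical epimorphism from $\bigoplus_{i\in Q_0}\sfP_i(P^i)$ splits when $X$ is projective. Your added remark about finiteness of $Q_0$ is a worthwhile clarification that the paper leaves implicit.
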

\begin{proof}
    Each object $\sfP_i(P)$ is projective in $[Q,\mcA]$ by \cref{rmk:presproj}.  This establishes `$\supseteq$'.  Conversely, for a projective object $X$ in $[Q,\mcA]$, the epimorphism
    \[
        \bigoplus_{i\in Q_0} \sfP_i(P^i) \onto X
    \]
    of \cref{rmk:enoughproj} splits.  Thus $X\in\add \{ \sfP_i(P) \mid i \in Q_0, P \in \Proj A \}$, establishing `$\subseteq$'.
\end{proof}

\begin{lemma}
    \label{lemma:projres}
    Let $X\in[Q,\mcA]$ be any representation.  Then there is a short exact sequence
    \[
        \bigoplus_{\alpha\colon j \shortto k} \sfP_k(X_j) \stackrel m\into \bigoplus_{j\in Q_0} \sfP_j(X_j) \stackrel p\onto X,
    \]
    which is natural in $X$.

    In particular, if each component $X_i$ is projective, then $X$ has projective dimension at most $1$.
\end{lemma}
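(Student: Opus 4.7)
The plan is to identify the epimorphism $p$ with the canonical one already constructed in \cref{rmk:enoughproj}, whose $i$-th component sends a summand $X_j^{(\rho)} \subseteq \sfP_j(X_j)_i$ (indexed by $\rho\colon j\leadsto i$) to $X_i$ via the structure map $X_\rho$ of the representation. The map $m$ is then defined summand-wise: for each arrow $\alpha\colon j\shortto k$, the restriction of $m$ to $\sfP_k(X_j)$ is the difference of the natural monomorphism $\sfP_\alpha(X_j)\colon \sfP_k(X_j) \into \sfP_j(X_j)$ and the map $\sfP_k(X_\alpha)\colon \sfP_k(X_j) \to \sfP_k(X_k)$. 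Since both $p$ and $m$ are assembled from the natural transformations $\sfP_\alpha$ and the structure maps of $X$, the entire sequence is manifestly natural in $X$.

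A short computation checks that $pm = 0$: a summand $X_j^{(\sigma)} \subseteq \sfP_k(X_j)_i$ corresponding to $\sigma\colon k\leadsto i$ is sent by both composite routes to $X_i$ via $X_\sigma \circ X_\alpha = X_{\sigma\alpha}$, so the signed difference vanishes.

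The real work is proving exactness at the middle term, which I would verify pointwise. At vertex $i$, let $P_i$ denote the set of paths ending at $i$ and $P_i^+\subseteq P_i$ those of positive length, each of the latter having a unique decomposition $\rho = \alpha\sigma$ with $\alpha$ its first arrow. Under the identifications
\[
    \Bigl(\bigoplus_{j'\in Q_0} \sfP_{j'}(X_{j'})\Bigr)_i = \bigoplus_{\rho\in P_i} X_{s(\rho)}^{(\rho)}, \qquad \Bigl(\bigoplus_{\alpha\colon j\shortto k} \sfP_k(X_j)\Bigr)_i = \bigoplus_{\rho\in P_i^+} X_{s(\rho)}^{(\rho)},
\]
the map $m_i$ sends $y_\rho \in X_{s(\rho)}^{(\rho)}$ to $y_\rho$ in the $\rho$-summand of the middle and to $-X_\alpha(y_\rho)$ in the $\sigma$-summand. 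Injectivity of $m_i$ is immediate by reading off the component at a path of maximal length in the support of $y$. For the inclusion $\ker p_i \subseteq \im m_i$, I would induct on the height $h(x) := \max\{|\rho| : x_\rho \neq 0\}$, which is bounded because $Q$ is finite and acyclic. The base case $h(x) = 0$ forces $x_{e_i} = p_i(x) = 0$, and in the inductive step, setting $y_\rho := x_\rho$ for the paths $\rho$ of length $h(x)$ and zero otherwise, a direct bookkeeping check shows that $x - m_i(y)$ lies in $\ker p_i$ and has strictly smaller height. The main obstacle is this bookkeeping, ensuring that subtracting $m_i(y)$ only redistributes weight into paths of strictly shorter length.

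Finally, the ``in particular'' clause is immediate: if each $X_i$ is projective in $\mcA$, then by \cref{lemma:allproj} both outer terms lie in $\Proj[Q,\mcA]$, so the sequence is a projective resolution of length one and hence $\pdim X \leq 1$.
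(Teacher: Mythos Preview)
Your setup matches the paper's exactly: same $p$, same $m$, same check that $pm=0$, and exactness verified pointwise at each vertex $i$. The only difference is the organization of that pointwise check. The paper filters the complex at vertex $i$ by the \emph{source vertex} $j$ of each summand $X_j$ (using a linear extension of the partial order on $Q_0$) and observes that each subquotient is trivially split exact; you instead filter by \emph{path length} and induct on the height $h(x)$. Both filtrations exploit the same triangular structure of $m_i$, so the arguments are essentially equivalent.

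One small caveat: your exactness argument is phrased element-wise (``$x_\rho \neq 0$'', ``maximal length in the support of $y$''), which is not literally available in a general abelian category $\mcA$. The content of your induction is really that, after ordering summands by decreasing path length, the composite of $m_i$ with projection onto the positive-length-path summands is upper triangular with identities on the diagonal, hence invertible; this makes $m_i$ a split monomorphism with cokernel the trivial-path summand $X_i^{(e_i)}$, onto which $p_i$ restricts to the identity. Phrasing it this way (or invoking an embedding theorem) closes the gap. The paper's filtration-by-vertex argument avoids the issue by working directly with the direct-sum decomposition rather than with elements, and as a bonus makes the pointwise \emph{split} exactness transparent.
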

\begin{proof}
    The morphism $p$ is the canonical epimorphism of \cref{rmk:enoughproj}.  Given an arrow $\alpha\colon j\shortto k$ in the quiver, we define $m$ at the corresponding summand $\sfP_k(X_j)$ as the morphism
    \[
        \left(\begin{matrix} \sfP_{\alpha}(X_j) \\ -\sfP_k(X_\alpha) \end{matrix}\right)\colon \sfP_k(X_j) \to \sfP_j(X_j) \oplus \sfP_k(X_k).
    \]

    We show that this sequence is component-wise split exact, establishing the first claim.  The restriction of the two morphisms above to an arbitrary vertex $i$ is
    \begin{equation} \label{seq.funct_proj_degree}
        \bigoplus_{\substack{\alpha\colon j \shortto k \\ \rho\colon k \leadsto i}} X^{(\alpha,\rho)}_j
        \stackrel m\to
        \bigoplus_{\pi\colon j \leadsto i} X^{(\pi)}_j
        \stackrel p\to
        X_i.
    \end{equation}
    Here, again, the superscript indices are just used to distinguish the otherwise identical summands $X^{(\alpha,\rho)}_j = X_j$ and $X^{(\pi)}_{j} = X_{j}$.   At the summand $X^{(\alpha,\rho)}_j$
    corresponding to  an arrow $\alpha \colon j \shortto k$  and a path $\rho \colon k \leadsto i$, the morphism $m$ takes the form
    \[
        \left(\begin{matrix} \id_{X_j} \\ -X_\alpha \end{matrix}\right)\colon X_j^{(\alpha,\rho)} \to X_j^{(\rho\alpha)}\oplus X_k^{(\rho)}.
    \]
    Similarly, at the summand $X_j^{(\pi)}$, the morphism $p$ takes the form
    \[
        X_{\pi}\colon X_j^{(\pi)} \to X_i.
    \]
    In particular one immediately verifies that $p \circ m = 0$.

    Note that the only maps from objects of the form $X_j$ to $X_{j'}$ are induced by the structure maps of $X$, and in particular in the direction of the quiver $Q$ (recall that $Q$ has no oriented cycles).
    We can thus extend the partial order on the vertices of $Q$ to a total order $\preceq$, which induces a filtration of the complex \eqref{seq.funct_proj_degree} by restricting only to summands of the form $X_j$ with $j \succeq \ell$ for a given $\ell$.
    The consecutive steps in this filtration yield subquotient complexes involving only terms $X_j$ for a given $j$. These are of the form
    \[
        \bigoplus_{\substack{\alpha\colon j \shortto k \\ \rho\colon k \leadsto i}} X^{(\alpha,\rho)}_j
        \stackrel{m_j}{\to}
        \bigoplus_{\pi\colon j \leadsto i} X^{(\pi)}_j
        \stackrel{p_j}{\to}
        \begin{cases} X_i & i = j \\ 0 & i \neq j \end{cases}.
    \]
    Note that now at the summand $X^{(\alpha,\rho)}_j$, the morphism $m_j$ takes the form
    \[
        \id_{X_j} \colon X^{(\alpha,\rho)}_j \to X_j^{(\rho\alpha)},
    \]
    and for $j = i$ the morphism $p_j$ projects to the summand corresponding to the trivial path. In particular, all these subquotients are trivially split exact sequences, whence the complexes \eqref{seq.funct_proj_degree} also are split exact sequences.
\end{proof}

We recall the following useful trick for constructing adjoint functors:
\begin{lemma}
    \label{lem:adjointtrick}
    Let
    \[
        F \stackrel f\to G \onto H
    \]
    be an exact sequence of functors between abelian categories $\mcA$ and $\mcB$.  If both $F,G \colon \mcA \to \mcB$ admit right adjoints, then so does $H$.
\end{lemma}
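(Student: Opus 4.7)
The plan is to construct the right adjoint $H^R$ pointwise as a kernel, using the mate correspondence for natural transformations between adjoint functors. Let $F^R$ and $G^R$ denote the given right adjoints. The transformation $f\colon F \to G$ has a canonical mate $f^R\colon G^R \to F^R$, defined at $B \in \mcB$ as the morphism adjoint (under $F\dashv F^R$) to the composite
\[
    F(G^R(B)) \xto{f_{G^R(B)}} G(G^R(B)) \xto{\varepsilon^G_B} B,
\]
where $\varepsilon^G$ denotes the counit of $G\dashv G^R$. A standard calculation with the unit/counit triangular identities shows that the precomposition map $(-)\circ f_A \colon \Hom_\mcB(G(A),B) \to \Hom_\mcB(F(A),B)$ corresponds, under the two adjunction isomorphisms, to the postcomposition map $f^R_B\circ(-) \colon \Hom_\mcA(A,G^R(B)) \to \Hom_\mcA(A,F^R(B))$.

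Next, I would define $H^R\colon \mcB \to \mcA$ by $H^R(B) := \ker(f^R_B)$, which is functorial in $B$ since kernels are. To verify $H\dashv H^R$, fix $A \in \mcA$ and $B \in \mcB$. Since (co)limits in a functor category valued in an abelian category are computed pointwise, the given exact sequence of functors evaluates to an exact sequence $F(A) \xto{f_A} G(A) \onto H(A)$ in $\mcB$. Applying $\Hom_\mcB(-,B)$ and using left exactness gives
\[
    \Hom_\mcB(H(A), B) \isom \ker\bigl( (-)\circ f_A \colon \Hom_\mcB(G(A), B) \to \Hom_\mcB(F(A), B) \bigr),
\]
which by the mate correspondence above identifies with
\[
    \ker\bigl( f^R_B\circ(-) \colon \Hom_\mcA(A, G^R(B)) \to \Hom_\mcA(A, F^R(B)) \bigr) \isom \Hom_\mcA(A, H^R(B)),
\]
where the last isomorphism uses left exactness of $\Hom_\mcA(A,-)$ together with the definition $H^R(B) = \ker f^R_B$. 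These identifications are natural in both $A$ and $B$, so stringing them together yields the required natural bijection $\Hom_\mcB(H(A),B) \isom \Hom_\mcA(A,H^R(B))$.

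The only delicate step is the mate correspondence itself, i.e.\ verifying that precomposition with $f_A$ on the $\mcB$-side transports to postcomposition with $f^R_B$ on the $\mcA$-side. This is a classical diagram chase with the unit/counit triangular identities of the adjunctions $F\dashv F^R$ and $G\dashv G^R$, and I do not expect any genuine obstacle; everything else reduces to left exactness of $\Hom$ and the pointwise computation of cokernels in the functor category.
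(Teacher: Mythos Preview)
Your proposal is correct and essentially the same as the paper's proof. The paper obtains the mate transformation $f'\colon G'\to F'$ via Yoneda (as the unique natural transformation making the square between the adjunction isomorphisms commute), whereas you write down its explicit unit/counit formula; both then define the right adjoint of $H$ as $\ker f'$ and conclude by left exactness of $\Hom$ on each side.
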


\begin{proof}
    Letting $F',G' \colon \mcB \to \mcA$ be the adjoint functors to $F,G$,
    we obtain a commutative diagram
    \[
        \begin{tikzpicture}[scale=.75]
            \node (1) at (0,2) {$\Hom_{\mcB}\big(H(-),-\big)$};
            \node (2) at (5,2) {$\Hom_{\mcB}\big(G(-),-\big)$};
            \node (3) at (10,2) {$\Hom_{\mcB}\big(F(-),-\big)$};
            \node (4) at (0,0) {$\Hom_\mcA\big(-,(\ker f')(-)\big)$};
            \node (5) at (5,0) {$\Hom_\mcA\big(-,G'(-)\big)$};
            \node (6) at (10,0) {$\Hom_\mcA\big(-,F'(-)\big)$};
            \path[->,font=\scriptsize]
                (1) edge[right hook->] node[auto] {} (2)
                (2) edge node[auto] {$f^*$} (3)
                (4) edge[right hook->] node[auto] {} (5)
                (5) edge node[auto] {$f'_*$} (6)
                (1) edge[densely dashed] node[auto] {$\wr$} (4)
                (2) edge node[auto] {$\wr$} (5)
                (3) edge node[auto] {$\wr$} (6);
        \end{tikzpicture}
    \]
    where $f'_*$ is the unique morphism making the rightmost square commute, yielding a natural transformation $f'\colon G' \to F'$ by the Yoneda lemma.  Thus, by universality of the kernels in the left column, we obtain a natural isomorphism
    \[
        \Hom_{\mcB}\big(H(-),-\big) \isom \Hom_\mcA\big(-,(\ker f')(-)\big),
    \]
    making $H' := \ker f'$ a right adjoint to $H$.
\end{proof}

\begin{lemma}
    \label{lem:injrighadjoint}
    The functor $\sfI_i \colon \mcA \to [Q, \mcA]$ has a right adjoint $\sfRI_i \colon [Q, \mcA] \to \mcA$.
\end{lemma}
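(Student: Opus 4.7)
The strategy is to realize $\sfI_i \colon \mcA \to [Q,\mcA]$ as the cokernel of a natural transformation between functors that are themselves left adjoints, and then invoke \cref{lem:adjointtrick} to conclude that $\sfI_i$ inherits a right adjoint. So the first thing I would do is produce a natural two-step resolution of $\sfI_i$ by nicer functors.

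The source of such a resolution is \cref{lemma:projres}, applied to the representation $X = \sfI_i(A)$ for $A \in \mcA$. Naturality in $X$ makes the result natural in $A$, so I obtain a (short) exact sequence of functors
\[
    \bigoplus_{\alpha\colon j \shortto k} \sfP_k\bigl(\sfI_i(A)_j\bigr) \to \bigoplus_{j \in Q_0} \sfP_j\bigl(\sfI_i(A)_j\bigr) \onto \sfI_i(A).
\]
Because $Q$ is finite acyclic, the set of paths $\rho \colon j \leadsto i$ is finite for each $j$, so $\sfI_i(A)_j = \bigoplus_{\rho\colon j \leadsto i} A^{(\rho)}$ is a finite direct sum. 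Since each $\sfP_k$ is exact, it commutes with these finite direct sums, and one can rewrite $\sfP_k\bigl(\sfI_i(A)_j\bigr) = \bigoplus_{\rho\colon j \leadsto i} \sfP_k(A)$, and analogously for the middle term. Hence the two leftmost functors in the sequence are finite direct sums of functors of the form $\sfP_\bullet \colon \mcA \to [Q,\mcA]$.

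Now each $\sfP_\bullet$ is left adjoint to the evaluation functor $-_\bullet$, and a finite direct sum of left adjoints is again a left adjoint, with right adjoint the finite direct sum (equivalently, product) of the corresponding right adjoints. Thus both left-hand functors in our resolution admit right adjoints, and a direct application of \cref{lem:adjointtrick} furnishes the right adjoint $\sfRI_i$ of $\sfI_i$, given explicitly as the kernel of the induced natural transformation between those right adjoints. The only subtlety is the finiteness of the direct sums, which lets us pass from left to right adjoints without worrying about coproducts versus products; this is guaranteed outright by the assumption that $Q$ is a finite acyclic quiver.
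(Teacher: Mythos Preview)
Your proof is correct and follows essentially the same approach as the paper: apply \cref{lemma:projres} to $\sfI_i(A)$ to resolve $\sfI_i$ by finite direct sums of the functors $\sfP_\bullet$, then invoke \cref{lem:adjointtrick}. The paper's version is slightly terser, writing $\sfP_k^{|Q(j,i)|}$ for what you spell out as $\bigoplus_{\rho\colon j \leadsto i}\sfP_k$, but the argument is identical.
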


\begin{proof}
    Applying \cref{lemma:projres}, and recalling that $\sfI_i(A)_j = A^{|Q(j,i)|}$, where $|Q(j,i)|$ is the number of paths $j\leadsto i$ in $Q$, we have the following short exact sequence of functors for all $i$,
    \[
        \bigoplus_{\alpha\colon j\rightarrow k} \sfP_k^{|Q(j,i)|} \stackrel m\into \bigoplus_{j\in Q_0} \sfP_j^{|Q(j,i)|} \onto \sfI_i.
    \]
    The first two terms have right adjoints as they are direct sums of the functors $\sfP_j$.  Thus, by \cref{lem:adjointtrick}, $\sfRI_i$ exists and is a right adjoint to $\sfI_i$, for all $i$.
\end{proof}

With these lemmas at hand, we produce tilting subcategories of $[Q,\mcA]$.
\begin{prop}
    \label{prop:tiltingquiver}
    Let $\mcA$ be an abelian category with enough projectives and put
    \[
        \mbT = \add \{ \sfI_i(P) \mid i \in Q_0 \text{ and } P \in\Proj\mcA \}.
    \]
    Then $\mbT$ is tilting in $[Q, \mcA]$.
\end{prop}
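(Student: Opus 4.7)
My plan is to verify the four conditions defining a tilting subcategory one by one, exploiting the various adjunctions among the functors involved. For condition~(1), $\Ext^1$-vanishing, observe that the evaluation functor $-_i$ is exact and, by \cref{lemma:allproj}, sends projective representations to projective objects in $\mcA$. Hence any projective resolution in $[Q,\mcA]$ restricts at vertex $i$ to a projective resolution in $\mcA$, and the adjunction $-_i \dashv \sfI_i$ gives an isomorphism $\Ext^n_{[Q,\mcA]}(Y,\sfI_i(P)) \cong \Ext^n_\mcA(Y_i,P)$ for every $Y$; taking $Y = \sfI_j(P')$ yields $Y_i = (P')^{|Q(i,j)|}$, which is projective in $\mcA$, so this group vanishes for $n=1$. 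Condition~(2) is immediate from \cref{lemma:projres}: every component $\sfI_i(P)_j = P^{|Q(j,i)|}$ is projective in $\mcA$, so $\pdim \sfI_i(P) \le 1$.

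For condition~(4), contravariant finiteness, I would invoke the right adjoint $\sfRI_i$ of $\sfI_i$ provided by \cref{lem:injrighadjoint}. Given $X \in [Q,\mcA]$, choose for each vertex $i$ a projective epimorphism $P^i \onto \sfRI_i(X)$ in $\mcA$, adjoint-transpose them to morphisms $\sfI_i(P^i) \to X$, and assemble these into $\varphi \colon \bigoplus_{i \in Q_0} \sfI_i(P^i) \to X$. For any morphism $\sfI_k(Q) \to X$ with $Q \in \Proj\mcA$, the adjoint $Q \to \sfRI_k(X)$ lifts through $P^k$ by projectivity of $Q$, and this lift transposes back to the required factorization through $\sfI_k(P^k)$; since $\mbT$ is generated (as an additive subcategory) by the $\sfI_k(Q)$ with $Q$ projective, $\varphi$ is a right $\mbT$-approximation of $X$.

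The main obstacle lies in condition~(3), the existence of two-term $\mbT$-coresolutions of projectives. By \cref{rem.cores_on_sums_enough} combined with \cref{lemma:allproj}, it suffices to exhibit one for each standard projective $\sfP_i(A)$ with $A \in \Proj\mcA$. I would take the canonical morphism
\[
    \sfP_i(A) \longrightarrow \bigoplus_{k \in Q_0} \sfI_k\bigl(\sfP_i(A)_k\bigr)
\]
assembled from the units of the adjunctions $-_k \dashv \sfI_k$; the trivial-path summand at each vertex provides a splitting, so this is a componentwise split monomorphism. The crux of the argument is then to identify the cokernel as an object of $\mbT$: unwinding the definitions, the cokernel at vertex $k$ is indexed by pairs of paths $(\sigma \colon i \leadsto t,\; \pi \colon k \leadsto t)$ modulo those arising as $(\pi\rho,\pi)$ for some $\rho \colon i \leadsto k$, and I would give a combinatorial decomposition of this quotient as $\bigoplus_\ell \sfI_\ell(A^{n_\ell})$, with multiplicities $n_\ell$ read off from path counts in $Q$. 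Verifying this decomposition compatibly with the structure maps of the cokernel, relying on the acyclicity of $Q$ to ensure left-cancellativity of path composition, is where I would expect to spend most of the effort.
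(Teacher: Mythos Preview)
Your argument for conditions (1), (2), and contravariant finiteness is correct and matches the paper's proof essentially verbatim.

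For condition (3) your approach is also correct, but you are working harder than necessary. The paper simply invokes the \emph{dual} of \cref{lemma:projres}: applied to any $X\in[Q,\mcA]$ it gives a short exact sequence
\[
    X \into \bigoplus_{j\in Q_0} \sfI_j(X_j) \onto \bigoplus_{\alpha\colon k\shortto j} \sfI_k(X_j),
\]
so for $X=\sfP_i(P)$ the cokernel is immediately identified as $\bigoplus_{\alpha\colon k\shortto j}\sfI_k\bigl(\sfP_i(P)_j\bigr)$, an object of $\mbT$ since each $\sfP_i(P)_j$ is projective. The combinatorial decomposition you outline---matching pairs of paths modulo those factoring through $k$---is precisely what the proof of the dual of \cref{lemma:projres} accomplishes, and the multiplicities you are after are $n_\ell=\sum_{\alpha\colon \ell\shortto j}|Q(i,j)|$. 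So your proposed ``main obstacle'' has already been dealt with once and for all in \cref{lemma:projres}; no further work is needed beyond citing its dual.
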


\begin{proof}
    We need to check the definition of tilting; see \cref{def.tilting}. The first point follows from the fact that adjunctions of \emph{exact} functors of abelian categories extend to $\Ext$-groups.  In particular, the adjunction between $-_j$ and $\sfI_j$ extends to $\Ext^1$, namely
    \[
        \Ext_{[Q, \mcA]}^1\big(\sfI_i(P), \sfI_j(P')\big) \isom \Ext_{\mcA}^1\big(\sfI_i(P)_j, P'\big).
    \]
    This particular case is straightforward to verify.  Indeed, let $X\in[Q,\mcA]$ have projective resolution $\sfP^\bullet \to X$.  Then $\sfP^\bullet_j \to X_j$ is a projective resolution in $\mcA$, as $-_j$ both is exact and preserves projectives.  The adjunction gives an isomorphism of complexes
    \[
        \Hom_{[Q,\mcA]}\big(\sfP^\bullet, \sfI_j(A)\big) \isom \Hom_\mcA(\sfP^\bullet_j, A),
    \]
    which compute $\Ext^1_{[Q,\mcA]}\big(X, \sfI_j(A)\big)$ and $\Ext^1_\mcA(X_j,A)$, respectively.

    Now $\sfI_i(P)_j$ is a (possibly empty) sum of copies of $P$, hence projective.  It follows that this $\Ext_{[Q, \mcA]}^1\big(\sfI_i(P), \sfI_j(P')\big)$ is zero.

    For the second point, we need to check that every object in $\mbT$ has projective dimension at most $1$.  This follows from \cref{lemma:projres}, since every component of $\sfI_i(P)$ is projective in $\mcA$.

    The third point follows in a similar way: By \cref{lemma:allproj} and \cref{rem.cores_on_sums_enough} it suffices to consider projectives of the form $\sfP_i(P)$. By the dual of \cref{lemma:projres}, applied to the particular representation $\sfP_i(P)$, there is a short exact sequence
    \[
        \sfP_i(P) \into  \bigoplus_{j \in Q_0} \sfI_j\big(\sfP_i(P)_j\big) \onto \bigoplus_{\alpha\colon k \shortto j} \sfI_k\big(\sfP_i(P)_j\big),
    \]
    whose two right terms are in $\mbT$.

    In order to conclude that $\mbT$ is tilting, we need to show that $\mbT$ is contravariantly finite in $[Q,\mcA]$.
    Let $X\in[Q,\mcA]$ be any representation.  Let $\sfRI_i$ be the right adjdoint of $\sfI_i$, as given in \cref{lem:injrighadjoint}.  For each $i$, choose an epimorphism $p^i: P^i \onto \sfRI_i(X)$ from a projective object
    $P^i \in \Proj\mcA$.
    We claim that the sum of the adjoints
    \[
        \bigoplus_{i\in Q_0} \sfI_i(P^i) \stackrel\varphi\to X
    \]
    is a right $\mbT$-approximation, i.e., any morphism $T \to X$ with $T\in \mbT$ factors through~$\varphi$.  It is sufficient to check the claim on the generators of $\mbT$, namely the objects~$\sfI_i(P)$, where $P$ is projective and $i$ is a vertex.  To this end, choose any morphism $f:\sfI_i(P) \to X$.  It is adjunct to a morphism $P \to \sfRI_i(X)$, which factors through $p^i$ by projectivity of $P^i$.  Thus $f$ factors through the $i$th component of $\varphi$, $\sfI_i(P^i) \to X$, and therefore through~$\varphi$ itself.
\end{proof}

\begin{cor}
    \label{cor.ctt_from_tilting}
    Let $\mbT$ be as in \cref{prop:tiltingquiver}.
    There is a cotorsion torsion triple $(\mcC, \mcT, \mcF)$ in $[Q, \mcA]$ given by
    \begin{align*}\SwapAboveDisplaySkip
        \mcC & = [Q,\Proj\mcA] \\
        \mcT & = \Fac \mbT \\
        \mcF & = \mbT^{\perp}.
    \end{align*}
    Moreover, we have $\mcT \cap \mcC = \mbT$.
\end{cor}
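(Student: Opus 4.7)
The plan is to deduce the cotorsion torsion triple from the general tilting machinery already developed: \cref{prop:torsionpair} immediately gives that $(\Fac\mbT,\mbT^\perp)$ is a torsion pair, and \cref{prop.cotorsion_from_tilting} gives that $\big({}^{\perp_1}(\Fac\mbT),\Fac\mbT\big)$ is a cotorsion pair, with the identifications
\[
    {}^{\perp_1}(\Fac\mbT) = \{X \in {}^{\perp_1}\mbT \mid \pdim X \leq 1\}
    \quad\text{and}\quad
    {}^{\perp_1}(\Fac\mbT)\cap\Fac\mbT = \mbT.
\]
Once I show that this cotorsion class is exactly $[Q,\Proj\mcA]$, both the cotorsion torsion triple and the identity $\mcT\cap\mcC = \mbT$ follow for free.

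The key technical input is the Ext-adjunction used in the proof of \cref{prop:tiltingquiver}: since $-_i\colon[Q,\mcA]\to\mcA$ is exact and preserves projectives (because $\sfP_i$ preserves projectives by \cref{rmk:presproj}, and one can read off a projective resolution of $X_i$ from a projective resolution of $X$), the adjunction between $-_i$ and $\sfI_i$ extends to give a natural isomorphism
\[
    \Ext^1_{[Q,\mcA]}\bigl(X,\sfI_i(A)\bigr) \;\isom\; \Ext^1_\mcA(X_i,A)
\]
for every $X\in[Q,\mcA]$, every vertex $i$ and every $A\in\mcA$.

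With this tool, the inclusion $[Q,\Proj\mcA]\subseteq{}^{\perp_1}(\Fac\mbT)$ is quick: if $X\in[Q,\Proj\mcA]$, then $\pdim X\leq 1$ by \cref{lemma:projres}, and the Ext-adjunction gives $\Ext^1_{[Q,\mcA]}(X,\sfI_i(P))\isom \Ext^1_\mcA(X_i,P)=0$ for every $P\in\Proj\mcA$, so $X\in{}^{\perp_1}\mbT$. For the reverse inclusion, the step I expect to be the main obstacle is enlarging the class of test objects from $\mbT$ (where we only have $\sfI_i(P)$ for $P$ projective) to all $\sfI_i(A)$ with $A\in\mcA$. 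The observation that closes this gap is that $\sfI_i$ is right exact (being a right adjoint to the exact functor $-_i$ one checks it is exact, but right exactness suffices here), so for any epimorphism $P\onto A$ with $P\in\Proj\mcA$ we get an epimorphism $\sfI_i(P)\onto\sfI_i(A)$, which shows $\sfI_i(A)\in\Fac\mbT$ for every $A\in\mcA$.

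Granting this, take $X\in{}^{\perp_1}(\Fac\mbT)$. Then for every vertex $i$ and every $A\in\mcA$ the Ext-adjunction yields
\[
    \Ext^1_\mcA(X_i,A) \;\isom\; \Ext^1_{[Q,\mcA]}\bigl(X,\sfI_i(A)\bigr) \;=\; 0,
\]
so $X_i\in\Proj\mcA$, i.e.\ $X\in[Q,\Proj\mcA]$. This establishes $\mcC = {}^{\perp_1}(\Fac\mbT) = [Q,\Proj\mcA]$, hence $(\mcC,\mcT,\mcF)$ is a cotorsion torsion triple; the identity $\mcT\cap\mcC=\mbT$ is then just the restatement of the final conclusion of \cref{prop.cotorsion_from_tilting}.
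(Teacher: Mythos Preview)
Your argument is correct and essentially identical to the paper's: both reduce, via \cref{thm.ctt=tilting} (or equivalently \cref{prop:torsionpair,prop.cotorsion_from_tilting}), to verifying ${}^{\perp_1}(\Fac\mbT)=[Q,\Proj\mcA]$, and both directions of that equality are checked exactly as you do, using $\sfI_i(A)\in\Fac\mbT$ together with the $\Ext$-adjunction $\Ext^1_{[Q,\mcA]}(X,\sfI_i(A))\isom\Ext^1_\mcA(X_i,A)$. One small wording issue: being a \emph{right} adjoint gives left exactness, not right exactness; the exactness of $\sfI_i$ is an independent fact (visible from its explicit direct-sum description and stated in the paper), so you should cite that directly rather than suggest it follows from the adjunction.
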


\begin{proof}
    Except for the description of $\mcC$, this is one direction of \cref{thm.ctt=tilting}. Thus the only thing to show is that
    \[
        {}^{\perp_1}\Fac\mbT = \{ X \in {}^{\perp_1}\mbT \mid \pdim X\leq 1 \}
    \]
    coincides with $[Q,\Proj\mcA]$.

    Note first that since $\sfI_i$ is exact and $\mcA$ has enough projectives, $\sfI_i(A)$ is in $\Fac \mbT$, for all $A\in\mcA$.  Thus, for $X \in {}^{\perp_1} \Fac \mbT$ one has
    \[
        \Ext_{\mcA}^1(X_i, A) \isom \Ext_{[Q, \mcA]}^1\big(X, \sfI_i(A)\big) = 0.
    \]
    It follows that $X_i$ is projective.

    Going the other way, let $X \in [Q, \mcA]$ such that all $X_i$ are projective.  By \cref{lemma:projres}, $\pdim X \leq 1$, so we need only show that $X \in {}^{\perp_1}\mbT$.  This follows from
    \[
        \Ext^1_{[Q,\mcA]}\big(X,\sfI_i(P)\big) \isom \Ext^1_\mcA(X_i,P) = 0. \qedhere
    \]
\end{proof}

\begin{thm}
    \label{thm.QA}
    Let $\mcA$ be abelian with enough projectives and $Q$ a finite acyclic quiver. With $\mbT$ as in \cref{prop:tiltingquiver}, we have
    \[
        \frac{[Q,\Proj\mcA]}{\mbT} \simeq \mbT^\perp
    \]
    induced by $\tfree$ and $\ctt$ coming from the (co)torsion pairs.
\end{thm}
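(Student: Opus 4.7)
The plan is to recognize this theorem as a direct specialization of the general machinery developed earlier, applied to the particular tilting subcategory $\mbT$ coming from the quiver representation setting. No new work should really be needed beyond assembling the right citations.

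First, I would invoke \cref{cor.ctt_from_tilting}, which already packages everything needed: it asserts that the triple
\[
    (\mcC, \mcT, \mcF) = \bigl([Q,\Proj\mcA],\ \Fac\mbT,\ \mbT^\perp\bigr)
\]
is a cotorsion torsion triple in $[Q,\mcA]$, and furthermore that $\mcC \cap \mcT = \mbT$. (This corollary itself rested on \cref{prop:tiltingquiver}, which verified the tilting conditions by combining the adjunction between $-_i$ and $\sfI_i$, the projective resolution from \cref{lemma:projres}, and the construction of right $\mbT$-approximations via $\sfRI_i$.)

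Second, I would apply \cref{thm.ctt} to this particular cotorsion torsion triple. That theorem states that whenever $(\mcC,\mcT,\mcF)$ is a cotorsion torsion triple, the functors $\ctt$ and $\tfree$ induce mutually inverse equivalences
\[
    \mcF \simeq \frac{\mcC}{\mcC \cap \mcT}.
\]
Substituting the identifications from \cref{cor.ctt_from_tilting}, the right-hand side becomes $[Q,\Proj\mcA]/\mbT$ and the left-hand side becomes $\mbT^\perp$, which is exactly the claimed equivalence, with the correct functors $\tfree$ and $\ctt$ implementing it.

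Since the entire argument is a two-step citation, there is essentially no obstacle; the only thing to mention in the write-up is that the equivalence claimed to come ``from the (co)torsion pairs'' is literally the pair of functors produced by \cref{thm.ctt} for this triple, and that the identification $\mcC \cap \mcT = \mbT$ is exactly what converts the denominator $\mcC/(\mcC \cap \mcT)$ in the abstract statement to the denominator $\mbT$ appearing in the theorem.
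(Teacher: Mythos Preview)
Your proposal is correct and matches the paper's proof exactly: the paper's entire argument is the single sentence ``This is now just a combination of \cref{thm.ctt,cor.ctt_from_tilting},'' which is precisely the two-step citation you describe.
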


\begin{proof}
    This is now just a combination of \cref{thm.ctt,cor.ctt_from_tilting}.
\end{proof}

We explicitly state the dual version of this theorem, which additionally summarizes the dual of previous results.
\begin{thm}
    \label{thm.QAop}
    Let $\mcA$ be an abelian category with enough injective objects, $Q$ a finite acyclic quiver, and put
    \[
        \mbC = \add \{ \sfP_i(I) \mid i \in Q_0 \text{ and } I \in\Inj\mcA \}.
    \]
    Then $(\mcT,\mcF)$ is a torsion pair and $(\mcF,\mcD)$ is cotorsion pair in $[Q,\mcA]$, where
    \begin{align*}
        \mcT &= {}^\perp\mbC \\
        \mcF &= \Sub\mbC \\
        \mcD &= [Q,\Inj\mcA] \\
        \mcF \cap \mcD &= \mbC,
    \end{align*}
    and there is an equivalence
    \[
        \frac{[Q,\Inj\mcA]}{\mbC} \simeq {}^\perp\mbC
    \]
    induced by $\tors\colon \mcA \to \mcT$ and $\widetilde\ctf\colon \mcA \to \frac{\mcD}{\mcF\cap\mcD}$.
\end{thm}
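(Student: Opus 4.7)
The plan is to obtain \cref{thm.QAop} as the formal dual of \cref{thm.QA}, running the entire chain of supporting results (\cref{lemma:projres,lem:injrighadjoint,prop:tiltingquiver,cor.ctt_from_tilting}) in $[Q,\mcA]^\op = [Q^\op, \mcA^\op]$. Under this dualization, $\sfP_i$ and $\sfI_i$ swap roles, projectives become injectives, \emph{tilting} becomes \emph{cotilting}, $\Fac$ becomes $\Sub$, and the cotorsion torsion triple becomes a torsion cotorsion triple. The equivalence at the end then comes from \cref{thm:tct} rather than \cref{thm.ctt}.

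First I would verify that $\mbC = \add\{\sfP_i(I) \mid i \in Q_0,\ I \in \Inj\mcA\}$ is cotilting in $[Q,\mcA]$, directly dualizing \cref{prop:tiltingquiver}. The $\Ext^1$-vanishing follows because $\sfP_i$ is left adjoint to the exact functor $-_i$, so the adjunction extends to $\Ext^1$ via an injective resolution, giving $\Ext^1_{[Q,\mcA]}(\sfP_i(I),\sfP_j(I')) \isom \Ext^1_\mcA(I, \sfP_j(I')_i)$; and $\sfP_j(I')_i$ is a (possibly empty) coproduct of copies of $I'$, hence injective. The bound $\idim \sfP_i(I) \leq 1$ follows from the dual of \cref{lemma:projres}, whose every component is injective. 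For the injective coresolution condition, apply the dual of \cref{lemma:projres} to $\sfI_i(I)$, yielding a short exact sequence $\bigoplus_{\alpha\colon k \shortto j} \sfP_k(\sfI_i(I)_j) \onto \bigoplus_{j} \sfP_j(\sfI_i(I)_j) \into \sfI_i(I)$ (rotated appropriately) whose outer terms lie in $\mbC$. Finally, covariant finiteness follows by constructing a left adjoint $\sfLP_i$ to $\sfP_i$ via the dual of \cref{lem:adjointtrick} applied to an injective copresentation of $\sfP_i$ (dualizing \cref{lem:injrighadjoint}); then for any $X$, choosing monomorphisms $\sfLP_i(X) \into I^i$ into injectives and adjoining back gives $X \to \bigoplus_i \sfP_i(I^i)$ as a left $\mbC$-approximation.

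With $\mbC$ cotilting, \cref{prop:dualtorsionpair} supplies the torsion pair $({}^\perp\mbC, \Sub\mbC)$ and the cotorsion pair $(\Sub\mbC, (\Sub\mbC)^{\perp_1})$, with $\Sub\mbC \cap (\Sub\mbC)^{\perp_1} = \mbC$. The remaining content is the identification $(\Sub\mbC)^{\perp_1} = [Q,\Inj\mcA]$, which is the dual of the corresponding computation in \cref{cor.ctt_from_tilting}: since each $\sfP_i$ is exact and $\mcA$ has enough injectives, $\sfP_i(A) \in \Sub\mbC$ for every $A$, so $X \in (\Sub\mbC)^{\perp_1}$ forces $\Ext^1_\mcA(A, X_i) \isom \Ext^1_{[Q,\mcA]}(\sfP_i(A), X) = 0$, giving $X_i \in \Inj\mcA$; conversely, if every $X_i$ is injective, the dual of \cref{lemma:projres} bounds $\idim X \leq 1$, and the adjunction gives $\Ext^1_{[Q,\mcA]}(\sfP_i(I), X) \isom \Ext^1_\mcA(I, X_i) = 0$. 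The equivalence then follows by applying \cref{thm:tct} to the torsion cotorsion triple $({}^\perp\mbC, \Sub\mbC, [Q,\Inj\mcA])$. The main obstacle I anticipate is ensuring the dual of \cref{lem:injrighadjoint} produces a well-defined left adjoint $\sfLP_i$ in this generality — but as in the original proof, it reduces to an injective copresentation of $\sfP_i$ by sums of $\sfI_j$'s, which is exactly the dual of what \cref{lemma:projres} provides.
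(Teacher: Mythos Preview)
Your proposal is correct and matches the paper's approach exactly: \cref{thm.QAop} is stated there as the formal dual of \cref{thm.QA}, with no separate proof beyond the implicit dualization of \cref{prop:tiltingquiver} and \cref{cor.ctt_from_tilting} followed by an appeal to \cref{thm:tct} in place of \cref{thm.ctt}. One cosmetic correction: for the third cotilting axiom you should apply \cref{lemma:projres} itself (not its dual) to $\sfI_i(I)$, obtaining $\bigoplus_{\alpha} \sfP_k(\sfI_i(I)_j) \into \bigoplus_{j} \sfP_j(\sfI_i(I)_j) \onto \sfI_i(I)$ --- the sequence you wrote has the arrows reversed, which your parenthetical ``rotated appropriately'' already flags.
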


\subsection{Representations of $A_n$ quivers}
\label{sec:quiverspecialcases}

In general, the right hand sides of the equivalences are fairly non-explicit.  We therefore further interpret these theorems in the case where $Q$ is a Dynkin quiver.  We start with the easiest case, which is also the most relevant to our applications, namely the linearly oriented Dynkin quiver of type $A$:
\[
    Q=\vec A_n\colon 1\to 2\to \cdots \to n.
\]

\begin{lemma}
    \label{lemma:TFAn}
    Let $\mcA$ be abelian with enough projectives and put
    \[
        \mbT = \add \{ \sfI_i(P) \mid 1\leq i\leq n \text{ and } P\in\Proj\mcA \} \subseteq [\vec A_n, \mcA].
    \]
    Then
    \begin{enumerate}
        \item $X \in \mbT^\perp$ if and only if $X_1 = 0$; and
        \item $X \in \Fac\mbT$ if and only if all structure maps of $X$ are epimorphisms.
    \end{enumerate}
\end{lemma}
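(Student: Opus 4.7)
The plan begins with identifying the explicit shape of $\sfI_i(P)$ in $[\vec A_n, \mcA]$. Since $\vec A_n$ admits a unique path $j \leadsto i$ whenever $j \leq i$ and none otherwise, the representation $\sfI_i(P)$ takes value $P$ at each vertex $j \leq i$, value $0$ at $j > i$, and all structure maps between nonzero components are identities. In particular, $\sfI_n(P)$ is the ``constant'' representation with value $P$ at every vertex.

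For (1), I would compute $\Hom_{[\vec A_n, \mcA]}(\sfI_i(P), X)$ directly from this description. Commutativity with the identity structure maps of $\sfI_i(P)$ forces any morphism $\sfI_i(P) \to X$ to have components $f_j = X_{1 \to j} \circ f_1$ for $j \leq i$, while commutativity at $j = i$ (for $i < n$) additionally demands $X_{1 \to i+1} \circ f_1 = 0$. Hence $\Hom(\sfI_i(P), X) \cong \Hom_\mcA(P, \ker X_{1\to i+1})$ when $i < n$, and $\Hom(\sfI_n(P), X) \cong \Hom_\mcA(P, X_1)$ when $i = n$. Since $\mcA$ has enough projectives, the vanishing of the latter for every projective $P$ is equivalent to $X_1 = 0$, and this in turn makes every $\ker X_{1 \to i+1}$ vanish, killing all the other $\Hom$-groups as well.

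For (2), the forward direction follows from two observations: each $\sfI_i(P)$ has epimorphic structure maps (identities, or zero maps to $0$), and epimorphisms in $[\vec A_n, \mcA]$ are detected componentwise since evaluation at each vertex is exact; therefore direct sums and quotients of such representations inherit epimorphic structure maps (in the commutative square $T_j \to T_{j+1} \to X_{j+1}$ equals $T_j \to X_j \to X_{j+1}$, and the composition along the top is epi). Conversely, assuming every $X_{j\to j+1}$ is an epimorphism, I would choose a projective $P$ with an epimorphism $p\colon P \onto X_1$ and define $\varphi\colon \sfI_n(P) \to X$ by $\varphi_j := X_{1 \to j} \circ p$; this is automatically compatible with the identity structure maps of $\sfI_n(P)$, and each $\varphi_j$ is epi as a composition of epis, making $X$ a quotient of $\sfI_n(P) \in \mbT$.

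No serious obstacle is anticipated. The only minor subtlety is that the ``easy'' adjunction $\Hom(X,\sfI_i(A)) \cong \Hom_\mcA(X_i, A)$ runs in the wrong direction for testing membership in $\mbT^\perp$, which is why the direct computation above is necessary; however, this computation is painless thanks to the very explicit shape of $\sfI_i(P)$ in the linearly oriented $\vec A_n$ case.
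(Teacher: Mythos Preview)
Your proof is correct and follows essentially the same route as the paper: identify the explicit shape of $\sfI_i(P)$, reduce (1) to the fact that $\Hom(\sfI_n(P),X)\cong\Hom_{\mcA}(P,X_1)$, and for (2) use that epimorphic structure maps pass to sums and quotients in one direction, and cover $X$ by $\sfI_n(P)\onto X$ built from a projective cover of $X_1$ in the other. The only cosmetic difference is that you compute $\Hom(\sfI_i(P),X)$ for every $i$, whereas the paper uses just $i=n$ (implicitly exploiting that $\sfI_n(P)=\sfP_1(P)$, so the left adjunction $\Hom(\sfP_1(P),X)\cong\Hom_{\mcA}(P,X_1)$ does point the right way) and then observes that $X_1=0$ already kills all $\Hom(\sfI_i(P),X)$ since any such morphism is determined by its first component.
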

\begin{proof}
    Recall first that for this quiver, $\sfI_i(P)$ is just
    \[
        \underbrace{P \stackrel\id\to P \stackrel\id\to \cdots \stackrel\id\to P}_\text{$i$ copies} \to 0 \to \cdots \to 0.
    \]

    For (1) we note that $X \in \mbT^\perp$ if and only if $\Hom_{[\vec A_n,\mcA]}\big(\sfI_i(P),X\big) = 0$ for all $1\leq i \leq n$ and $P \in \Proj\mcA$.  Any morphism $P \to X_1$ lifts uniquely to $\sfI_n(P) \to X$, so if $X\in\mbT^\perp$, then $P \to X_1$ is zero.  Since $\mcA$ has enough projectives, $X_1 = 0$.  On the other hand, if $X_1 = 0$, then $\Hom_{[Q,\mcA]}\big(\sfI_i(P),X\big) = 0$ for all $1\leq i\leq n$ and $P\in\Proj\mcA$.

    For (2), let first $X\in\Fac\mbT$, i.e., there is an epimorphism $T \onto X$ with $T\in\mbT$.  Since the structure maps for each $\sfI_i(P)$ is an epimorphism, the same is true for $T$.  Because $T\onto X$, each structure map of $X$ is an epimorphism.  Conversely, assume that the structure maps of $X$ are all epimorphisms and choose an epimorphism $P \onto X_1$.  It extends to a unique epimorphism $\sfI_n(P) \onto X$, that is to say, $X\in\Fac\mbT$.
\end{proof}

Noting that in this case $\mbT^\perp \simeq [\vec A_{n-1},\mcA]$, we arrive at the following corollary for \cref{thm.QA}.
\begin{cor}
    \label{cor.An}
    Let $\mcA$ be abelian with enough projectives.  Then
    \[
        \frac{[\vec A_n, \Proj\mcA]}{ \mbT } \simeq [\vec A_{n-1}, \mcA],
    \]
    where $\mbT$ is the additive subcategory generated by representations of the form $\sfI_i(P)$ with $P \in \Proj\mcA$.
    In particular, $\mbT$ is the subcategory of epimorphic representations with projectives at every vertex.
\end{cor}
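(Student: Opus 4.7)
The plan is to deduce this corollary by assembling the equivalence of \cref{thm.QA} with the explicit descriptions of $\mbT^\perp$ and $\Fac\mbT$ provided by \cref{lemma:TFAn}, together with a trivial identification of the target.

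First, I would apply \cref{thm.QA} to the quiver $Q = \vec A_n$, yielding the equivalence
\[
    \frac{[\vec A_n, \Proj\mcA]}{\mbT} \simeq \mbT^\perp.
\]
By \cref{lemma:TFAn}(1), the right hand side consists precisely of those representations $X \in [\vec A_n, \mcA]$ with $X_1 = 0$. The ``truncation'' functor
\[
    [\vec A_n, \mcA] \to [\vec A_{n-1}, \mcA], \quad (X_1 \to X_2 \to \cdots \to X_n) \mapsto (X_2 \to \cdots \to X_n)
\]
is then readily seen to restrict to an equivalence $\mbT^\perp \simeq [\vec A_{n-1}, \mcA]$, with quasi-inverse given by extending a representation of $\vec A_{n-1}$ by zero at vertex $1$ (with the forced zero map $0 \to X_2$). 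Composing with the equivalence above gives the main statement.

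For the ``in particular'' clause, I would appeal to \cref{cor.ctt_from_tilting}, which identifies $\mbT$ with the intersection $\mcC \cap \mcT$ of the cotorsion torsion triple produced from this tilting subcategory. By that corollary, $\mcC = [\vec A_n, \Proj\mcA]$ is the subcategory of representations whose value at every vertex is projective, and by \cref{lemma:TFAn}(2), $\mcT = \Fac\mbT$ is the subcategory of representations all of whose structure maps are epimorphisms. Intersecting these two descriptions immediately identifies $\mbT$ as the subcategory of epimorphic representations with projective objects at every vertex.

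No serious obstacle is anticipated here, as the entire content of the statement is already encoded in the earlier general results; this corollary is essentially a matter of specializing to $Q = \vec A_n$ and unwinding what $\mbT^\perp$, $\Fac\mbT$, and $\mbT$ become in that concrete setting.
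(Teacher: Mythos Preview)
Your proposal is correct and follows essentially the same approach as the paper: apply \cref{thm.QA}, identify $\mbT^\perp$ with $[\vec A_{n-1},\mcA]$ via \cref{lemma:TFAn}(1) (which the paper does just before stating the corollary), and then obtain the description of $\mbT$ from $\mbT = \mcC \cap \mcT$ in \cref{cor.ctt_from_tilting} combined with \cref{lemma:TFAn}(2). The paper's written proof is terser only because the identification $\mbT^\perp \simeq [\vec A_{n-1},\mcA]$ is noted in the sentence preceding the corollary.
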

\begin{proof}
    It only remains to establish the last claim.  By \cref{cor.ctt_from_tilting} we have
    \[
        \mbT = \Fac\mbT \cap [Q,\Proj\mcA].
    \]
    The second point of \cref{lemma:TFAn} completes the proof.
\end{proof}

Dually, from \cref{thm.QAop} one immediately obtains the following result.
\begin{cor}
    \label{cor.Andual}
    Let $\mcA$ be abelian with enough injectives.  Then
    \[
        \frac{[\vec A_n, \Inj\mcA]}{ \mbC } \simeq [\vec A_{n-1}, \mcA],
    \]
    where $\mbC$ is the additive subcategory generated by representations of the form $\sfP_i(I)$ with $I \in \Inj\mcA$.
    In particular, $\mbC$ is the subcategory of monomorphic representations with injectives at every vertex.
\end{cor}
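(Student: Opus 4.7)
The proof is a direct dualization of \cref{cor.An}. The plan is to apply \cref{thm.QAop} to $Q = \vec A_n$ and then make both the subcategory $\mbC$ and the orthogonal ${}^\perp\mbC$ explicit for this particular quiver.

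For the linearly oriented quiver $\vec A_n$, the generator $\sfP_i(I)$ is the representation concentrated in positions $i, i{+}1, \ldots, n$ with value $I$ and identity structure maps. Writing $X_{j,\ell}$ for the composite structure map $X_j \to X_\ell$ of a representation $X$, I would compute directly from naturality that a morphism $X \to \sfP_i(I)$ is the same data as a single morphism $f_n \colon X_n \to I$ subject to the constraint $f_n \circ X_{i-1,n} = 0$ (vacuous when $i=1$), with the remaining components determined by $f_j = f_n \circ X_{j,n}$. In particular $\Hom(X, \sfP_1(I)) \isom \Hom(X_n, I)$. Hence if $X \in {}^\perp\mbC$ then $\Hom(X_n, I) = 0$ for every injective $I \in \Inj \mcA$; since $\mcA$ has enough injectives, this forces $X_n = 0$. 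Conversely, if $X_n = 0$ then every $f_j = f_n \circ X_{j,n}$ vanishes, so $X \in {}^\perp\mbC$.

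The full subcategory of $X \in [\vec A_n, \mcA]$ with $X_n = 0$ is equivalent to $[\vec A_{n-1}, \mcA]$ via restriction (with inverse extension by zero). Composing this identification with the equivalence $\frac{[\vec A_n, \Inj\mcA]}{\mbC} \simeq {}^\perp\mbC$ from \cref{thm.QAop} yields the displayed equivalence.

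For the ``in particular'' statement, \cref{thm.QAop} gives $\mbC = \mcF \cap \mcD = \Sub\mbC \cap [\vec A_n, \Inj\mcA]$, so it suffices to describe $\Sub\mbC$ as the full subcategory of monomorphic representations (the dual of \cref{lemma:TFAn}(2)). Each $\sfP_i(I)$ has monomorphic structure maps, a property inherited by subobjects, so $\Sub\mbC$ is contained in the monomorphic representations. For the reverse inclusion, given a monomorphic $X$, choose a monomorphism $X_n \into I$ into an injective; the associated morphism $X \to \sfP_1(I)$ has $j$-th component the composite of $X_{j,n}$ with the chosen monomorphism, which is itself monic since $X_{j,n}$ is a composite of monomorphisms. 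Thus every component of the morphism is monic, exhibiting $X$ as a subobject of $\sfP_1(I) \in \mbC$. The main (mild) obstacle is simply keeping track of the direction of naturality when computing $\Hom(X, \sfP_i(I))$, since the relevant adjunction between $\sfP_i$ and evaluation points the wrong way for this calculation and the analysis must be carried out by hand.
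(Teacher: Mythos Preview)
Your proposal is correct and is exactly the dualization the paper has in mind; the paper itself gives no separate proof of \cref{cor.Andual} beyond the words ``Dually, from \cref{thm.QAop} one immediately obtains the following result,'' relying on the dual of \cref{lemma:TFAn} and the identification ${}^\perp\mbC \simeq [\vec A_{n-1},\mcA]$ just as you spell out. Your direct computation of $\Hom(X,\sfP_i(I))$ and the explicit embedding of a monomorphic $X$ into $\sfP_1(I)$ are precisely the dual arguments to those in \cref{lemma:TFAn}, so there is no substantive difference in approach.
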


\begin{remark}
    For $n=2$, these results just state that passing between modules and their projective (or injective) presentations preserves most information.
\end{remark}

We now restrict to the case where $\mcA = \rep_\field \vec A_m$ to recover \cref{thm:concrete} of the introduction.
\begin{cor}
    \label{cor:epireps}
    The functor $\tors$ induces an equivalence
    \[
         \frac{\rep_{\field}^{\e, *}(\vec A_m \otimes \vec A_n)}{\rep_{\field}^{\e, \m}(\vec A_m \otimes \vec A_n)} \to \rep_{\field}(\vec A_m \otimes \vec A_{n-1}).
    \]
    Moreover, $\rep_{\field}^{\e, \m}(\vec A_m \otimes \vec A_n)$ consists precisely of all direct sums of thin modules of the form $\field_{\{1, \ldots, i \} \times \{ j, \ldots, m \}}$.
\end{cor}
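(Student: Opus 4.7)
The plan is to deduce this result as a direct application of \cref{cor.Andual} with $\mcA = \rep_\field \vec A_m$, combined with \cref{thm.QAop} and \cref{thm:tct} for the identification of the functor. Essentially all of the work lies in translating between the abstract framework and the concrete categories in the statement.

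I begin with the key observation that $\rep_\field^\e \vec A_m = \Inj(\rep_\field \vec A_m)$: any epimorphic representation $V_1 \onto V_2 \onto \cdots \onto V_m$ decomposes, via choices of sections, as a direct sum of representations $\field_{\{1,\ldots,k\}}$, which are precisely the indecomposable injectives of $\rep_\field \vec A_m$. Combined with the standard identification $\rep_\field(\vec A_m \otimes \vec A_n) \simeq [\vec A_n, \rep_\field \vec A_m]$ sending a grid $X$ to the $\vec A_n$-indexed family of horizontal slices $X_{*,j}$, this yields $\rep_\field^{\e, *}(\vec A_m \otimes \vec A_n) \simeq [\vec A_n, \Inj \rep_\field \vec A_m]$. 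Applying \cref{cor.Andual} then gives the equivalence
\[
    \frac{\rep_\field^{\e, *}(\vec A_m \otimes \vec A_n)}{\mbC} \simeq \rep_\field(\vec A_m \otimes \vec A_{n-1}),
\]
where $\mbC$ is the subcategory of monomorphic $\vec A_n$-representations with values in $\Inj \rep_\field \vec A_m$. In grid terms, ``values in injectives'' means horizontal arrows are epimorphisms while ``monomorphic'' means vertical arrows are monomorphisms; hence $\mbC = \rep_\field^{\e, \m}(\vec A_m \otimes \vec A_n)$. A direct computation of the generators of $\mbC$ using $\sfP_\ell(A)_j = \bigoplus_{\rho \colon \ell \leadsto j} A$ and the uniqueness of paths between comparable vertices of $\vec A_n$ gives $\sfP_\ell(\field_{\{1,\ldots,k\}}) = \field_{\{1,\ldots,k\} \times \{\ell,\ldots,n\}}$, matching the claimed list of indecomposables.

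It remains to identify the equivalence with the functor $\tors$ of \cref{const:intro}. By \cref{thm:tct} applied to the torsion cotorsion triple of \cref{thm.QAop}, the equivalence is induced by the torsion functor of the torsion pair $(\mcT, \mcF) = ({}^\perp\mbC, \Sub\mbC)$, i.e., the right adjoint of the inclusion $\mcT \into [\vec A_n, \rep_\field \vec A_m]$. The dual of \cref{lemma:TFAn}(1) identifies $\mcT$ as the subcategory of representations vanishing at row $n$, so this right adjoint sends $X$ to its largest subobject vanishing at row $n$, whose value at $(i,j)$ is precisely $\ker(X_{i,j} \to X_{i,n})$ --- this is the functor of \cref{const:intro}. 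The main step requiring care is verifying this last identification: one must confirm that the construction $\tors X$ really lies in ${}^\perp\mbC$ (which amounts to Hom-vanishing from $\tors X$, whose row $n$ is zero, into each $\field_{\{1,\ldots,k\} \times \{\ell,\ldots,n\}}$, whose vertical structure maps are identities within its support, forcing all components of such a Hom to be zero by an inductive application of naturality) and that the quotient $X / \tors X$, with value $\Im(X_{i,j} \to X_{i,n})$ at $(i,j)$, lies in $\Sub\mbC$ (which is straightforward since these images form an ascending family of subspaces of $X_{i,n}$). Uniqueness of the torsion--torsion-free sequence then pins down the equivalence as claimed.
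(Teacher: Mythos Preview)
Your proposal is correct and follows essentially the same route as the paper: set $\mcA = \rep_\field \vec A_m$, identify $\Inj\mcA$ with the epimorphic $\vec A_m$-representations, apply \cref{cor.Andual}, and compute $\mbC$ both as $\rep_\field^{\e,\m}(\vec A_m\otimes\vec A_n)$ and via its indecomposable generators $\sfP_\ell(I_k)$. The one place you go beyond the paper's proof is the explicit identification of the inducing functor with the $\tors$ of \cref{const:intro}; the paper defers this to the dual of \cref{const.explicit_An}, whereas you argue it directly from the dual of \cref{lemma:TFAn}(1) and the description of the torsion functor as the maximal subobject with vanishing bottom row. (Note a small redundancy: once you have cited the dual of \cref{lemma:TFAn}(1), membership of $\tors X$ in $\mcT$ is immediate from $(\tors X)_n = \ker(X_n \to X_n) = 0$, so the separate Hom-vanishing check is unnecessary.)
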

\begin{proof}
    Letting $\mcA = \rep_\field \vec A_m$ in \cref{cor.Andual}, we have
    \[
        [\vec A_n, \mcA] \simeq \rep_\field (\vec A_m \tensor \vec A_n).
    \]
    Since the injective representations of $\vec A_m$ are precisely the representations where every structure map is an epimorphism, we we have
    \[
        [\vec A_n, \Inj\mcA] \simeq \rep_\field^{\e,*} (\vec A_m \tensor \vec A_n).
    \]
    Moreover, $\mbC \subseteq \rep_\field (\vec A_m \tensor \vec A_n)$ is the subcategory additively generated by the representations
    \[
        \sfP_j(I_i) = I_i \tensor P_j = \field_{\{1,\ldots,i\} \times \{j,\ldots,m\}},
    \]
    and $\mbC = \rep_\field^{\e,\m} (\vec A_m \tensor \vec A_n)$, since it is the subcategory of representations with monomorphisms in the $\vec A_n$-direction between injective $\vec A_m$-representations.
\end{proof}

\begin{construction}
    \label{const.explicit_An}
    The equivalence of \cref{thm.QA} has only been given rather abstractly, but in the special case of \cref{cor.An} we can explicitly describe the functors $\tfree\colon [\vec A_n,\mcA] \to \mcF$ and $\ctt\colon [\vec A_n,\mcA] \to \mcC/\mbT$ giving rise to the equivalence.

    Recall that the cotorsion torsion triple is given by
    \begin{align*}
        \mcC &= {}^{\perp_1}\Fac\mbT = [\vec A_n, \Proj\mcA] \\
        \mcT &= \Fac\mbT = \{ X \in [\vec A_n,\mcA] \mid \text{all structure maps of $X$ are epimorphisms} \} \\
        \mcF &= \mbT^\perp = \{ X \in [\vec A_n,\mcA] \mid X_1 = 0 \} \simeq [\vec A_{n-1},\mcA],
    \end{align*}
    by \cref{cor.ctt_from_tilting,lemma:TFAn}.

    The functor $\tfree\colon [\vec A_n, \mcA] \to \mcF$ is explicitly given by sending
    \[
        X := [ X_1 \stackrel{f_1}\to X_2 \stackrel{f_2}\to \cdots \stackrel{f_{n-1}}\to X_n ]
    \]
    to
    \[
        \tfree X := \bigg[ 0 \to \coker f_1 \to \coker f_2f_1 \to \cdots \to \coker f_{n-1}\cdots f_1 \bigg];
    \]
    the reason being that the canonical epimorphism $X \onto \tfree X$ has kernel
    \[
        \tors X := [ X_1 \onto \im f_1 \onto \im f_2f_1 \onto \cdots \onto \im f_{n-1}\cdots f_2f_1 ]
    \]
    which lies in $\mcT$, as every structure map is an epimorphism.

    Conversely, the construction from $[\vec A_n, \mcA]$ to $\mcC$, which gives rise to the functor $[\vec A_{n-1}, \mcA] \to \frac{ [\vec A_n, \Proj\mcA] }{ \mbT }$, by \cref{lemma:cotors_functors_cd}, is given by constructing a short exact sequence
    \[
        \ctf X \into \ctt X \onto X,
    \]
    where $\ctf X \in \mcT$ and $\ctt X \in \mcC$.  This is done in the following manner:

    Starting with a representation $X \in [\vec A_n,\mcA]$, one constructs the following diagram from right to left.
    \[
        \begin{tikzpicture}[scale=.75]
            \node (A6) at (13,0) {$X_n$};
            \node (A5) at (10,0) {$X_{n-1}$};
            \node (A4) at (7,0) {$X_{n-2}$};
            \node (A3) at (5,0) {$\cdots$};
            \node (A2) at (3,0) {$X_2$};
            \node (A1) at (0,0) {$X_1$};
            \node (P1) at (-.75, -2) {$P_1$};
            \node (W1) at (-1.5,-4) {$\Omega X_1$};
            \node (P2) at (2.25, -2) {$P_2$};
            \node (PB1) at (.75,-2) {$R_2$};
            \node (W2) at (1.5,-4) {$\Omega X_2$};
            \node (P4) at (6.25, -2) {$P_{n-2}$};
            \node (W4) at (5.5,-4) {$\Omega X_{n-2}$};
            \node (PB4) at (7.75,-2) {$R_3$};
            \node (P5) at (9.25, -2) {$P_{n-1}$};
            \node (W5) at (8.5,-4) {$\Omega X_{n-1}$};
            \node (PB5) at (10.75,-2) {$R_{n}$};
            \node (P6) at (12.25, -2) {$P_n$};
            \node (W6) at (11.5,-4) {$\Omega X_n$};
            \node (PB2) at (3.5,-2) {};
            \node (PB3) at (5,-2) {};
            \node (D3) at (4.25,-2) {$\cdots$};
            \node (W3) at (3.5,-4) {$\cdots$};
            \tikzpbadhoc{PB5}{A6}
            \tikzpbadhoc{PB4}{A5}
            \tikzpbadhoc{PB1}{A2}
            \path[->,font=\scriptsize]
                (A1) edge (A2)
                (A2) edge (A3)
                (A3) edge (A4)
                (A4) edge (A5)
                (A5) edge (A6)
                (P1) edge [->>] (A1)
                (PB1) edge [->>] (A1)
                (W1) edge [right hook->] (P1)
                (P2) edge [->>] (A2)
                (W2) edge [right hook->] (PB1)
                (W2) edge [right hook->] (P2)
                (P4) edge [->>] (A4)
                (PB4) edge [->>] (A4)
                (W4) edge [right hook->] (P4)
                (P5) edge [->>] (A5)
                (PB5) edge [->>] (A5)
                (W5) edge [right hook->] (PB4)
                (W5) edge [right hook->] (P5)
                (P6) edge [->>] (A6)
                (W6) edge [right hook->] (PB5)
                (W6) edge [right hook->] (P6)
                (P1) edge [->>] (PB1)
                (PB1) edge (P2)
                (P4) edge [->>] (PB4)
                (PB4) edge (P5)
                (P5) edge [->>] (PB5)
                (PB5) edge (P6)
                (W1) edge [->>] (W2)
                (W4) edge [->>] (W5)
                (W5) edge [->>] (W6)
                (P2) edge [->>] (PB2)
                (PB3) edge (P4)
                (W2) edge[->>] (W3)
                (W3)edge[->>] (W4);
        \end{tikzpicture}
    \]
    That is, start with an epimorphism from a projective $P_n$ to $X_n$ and take the pullback along $X_{n-1} \to X_n$, denoted by $R_n$.  These two epimorphisms have the same kernel, which we denote by $\Omega X_n$.  Next, take an epimorphism from a projective $P_{n-1}$ to the pullback, which also gives an epimorphism $P_{n-1} \onto X_{n-1}$.  Note that the map of kernels $\Omega X_{n-1} \onto \Omega X_n$ is an epimorphism, as can be seen by an easy diagram chase.
    Take the pullback along $X_{n-2} \to X_{n-1}$ and iterate this procedure down to $X_1$.

    Denoting the middle row of projective objects by $\ctt X$ and the lower row by $\ctf X$, we have constructed the desired short exact sequence, as $\ctt X \in \mcC$ and $\ctf X \in \mcT$.  Thus the image of $X$ under $\ctt\colon [\vec A_n,\mcA] \to \frac{[\vec A_n,\mcA]}{\mbT}$ is given by $[P_1 \to P_2 \to \cdots \to P_n]$.

    The constructions of $\tfree$ and $\ctt$ can be formally dualized to produce the equivalence of \cref{cor.Andual}.  In particular, we recover \cref{const:intro}.
\end{construction}

\begin{ex}
    We take $\mcA = \Rep_\field^\mathrm{fp} \mbR_{\geq 0}$, and denote by $\proj_\field \mbR_{\geq0} = \add \{ \field_{\hopen x,\infty} \}$ its subcategory of projectives. Equivalently, $\proj_\field \mbR_{\geq0}$ is the subcategory given by requesting the property that all structure maps are monomorphic.

    We are interested in determining all indecomposables in $[\vec A_2, \proj_\field \mbR_{\geq0}]$.  By \cref{cor.An}, we have
    \[
        \frac{ [\vec A_2, \proj_\field \mbR_{\geq0}] }{\mbT} \simeq \mcA.
    \]
    Thus the indecomposables in $[\vec A_2, \proj_\field \mbR_{\geq0}]$ are those in $\mbT$, along the ones coming from the indecomposables in $\mcA$.  In this setup we know both collections:
    \begin{align*}
        \mbT &= \add \big(\{ \field_{\hopen x,\infty} \to \field_{\hopen x,\infty} \} \cup \{ \field_{\hopen x,\infty} \to 0 \}\big), \quad\text{and} \\
        \mcA &= \add \{ \field_{\hopen x,y} \}, \quad\text{where } 0\leq x < y\leq\infty.
    \end{align*}
    Under the construction above, the representations of $\mbT$ are sent to $0$ and the indecomposable $\field_{\hopen x,y}$ in $\mcA$ corresponds to
    \[
        [ \field_{\hopen y,\infty} \to \field_{\hopen x,\infty} ] \in [\vec A_2, \proj_\field \mbR_{\geq0}].
    \]

    Thus our complete list of indecomposables in $[\vec A_2, \proj_\field \mbR_{\geq0}]$ is
    \begin{align*}
        [\field_{\hopen x,\infty} & \to \field_{\hopen x,\infty}], &&\text{for } 0 \leq x < \infty, \\
        [\field_{\hopen y,\infty} & \to 0], &&\text{for } 0 \leq y < \infty, \text{ and} \\
        [\field_{\hopen y,\infty} & \to \field_{\hopen x,\infty}], &&\text{for } 0 \leq x < y \leq \infty.
    \end{align*}
    Respectively, we may picture these as in \cref{fig:indecsA2R}.

    \begin{figure}
        \centering
        \begin{subfigure}[t]{.25\textwidth}
            \begin{tikzpicture}[scale=.55,baseline=0]
                \node (1) at (1,2) {$1$};
                \node (2) at (1,0) {$2$};
                \fill[color=lightergray, path fading=east] (2,2) rectangle (6,0);
                \draw[->] (1) edge (2);
                \draw[thick] (5,0) -- (2,0) -- (2,2) -- (5,2);
                \draw[densely dashed, thick,->] (5,0) -- (6,0);
                \draw[densely dashed, thick,->] (5,2) -- (6,2);
                \draw (2,2pt) -- (2,-2pt) node[anchor=north] {$x$};
            \end{tikzpicture}
        \end{subfigure}
        \hspace{.01\textwidth}
        \begin{subfigure}[t]{.20\textwidth}
            \begin{tikzpicture}[scale=.55,baseline=0]
                \node (1) at (2,2) {$\phantom1$};
                \draw[thick] (2,2) -- (5,2);
                \draw[densely dashed, thick,->] (5,2) -- (6,2);
                \draw (2,2pt) -- (2,-2pt) node[anchor=north] {$y$};
            \end{tikzpicture}
        \end{subfigure}
        \hspace{.01\textwidth}
        \begin{subfigure}[t]{.25\textwidth}
            \begin{tikzpicture}[scale=.55,baseline=0]
                \node (1) at (2,2) {$\phantom1$};
                \fill[color=lightergray, path fading=east] (3,2) rectangle (6,0);
                \draw[thick] (1,0) -- (5,0);
                \draw[thick] (3,0) -- (3,2) -- (5,2);
                \draw[densely dashed, thick,->] (5,0) -- (6,0);
                \draw[densely dashed, thick,->] (5,2) -- (6,2);
                \draw (1,2pt) -- (1,-2pt) node[anchor=north] {$x$};
                \draw (3,2pt) -- (3,-2pt) node[anchor=north] {$y$};
            \end{tikzpicture}
        \end{subfigure}
        \hspace{.01\textwidth}
        \begin{subfigure}[t]{.20\textwidth}
            \begin{tikzpicture}[scale=.55,baseline=0]
                \node (1) at (2,2) {$\phantom1$};
                \draw[thick] (2,0) -- (5,0);
                \draw[densely dashed, thick,->] (5,0) -- (6,0);
                \draw (2,2pt) -- (2,-2pt) node[anchor=north] {$x$};
            \end{tikzpicture}
        \end{subfigure}
        \caption{The four types of indecomposable representations of $[\vec A_2,\proj_\field \mbR_{\geq0}]$.  From left to right: $[\field_{\hopen x,\infty} {\rightarrow} \field_{\hopen x,\infty}]$, $[\field_{\hopen y,\infty} {\rightarrow} 0]$, $[\field_{\hopen y,\infty} {\rightarrow} \field_{\hopen x,\infty}]$, and $[0 {\rightarrow} \field_{\hopen x,\infty}]$.  The first two objects are contained in $\mbT$, and thus sent to $0$ in $\Rep_\field^\mathrm{fp} \mbR_{\geq0}$, whereas the latter two are sent to $\field_{\hopen x,y}$ and $\field_{\hopen x,\infty}$.}
        \label{fig:indecsA2R}
    \end{figure}

    Dually, and more relevant to clustering, we may study the subcategory $\inj \mbR_{\geq0}$ of $\Rep_\field^\mathrm{fp} \mbR_{\geq 0}$ where all structure maps are surjections.
    These are the injectives in $\Rep_\field^\mathrm{fp} \mbR_{\geq 0}$.  We obtain a complete list of indecomposables in $[\vec A_2, \inj \mbR_{\geq0}]$ as
    \begin{align*}
        \field_{\hopen 0,x} & \to \field_{\hopen 0,x} && \text{with } 0 < x \leq \infty \\
        0 & \to \field_{\hopen 0,x} &&  \text{with } 0 < x \leq \infty \\
        \field_{\hopen 0,y} & \to \field_{\hopen 0,x} && 0 \leq x < y \leq \infty.
    \end{align*}
\end{ex}

\begin{ex}
    Now we consider filtered hierarchical clustering for $3$-step filtration.  That is, we would like to find all indecomposables in $[\vec A_3, \inj_\field \mbR_{\geq0}]$.  This turns out to be a lot harder.  Let us illustrate this by restricting to $5$ fixed critical values, that is, we replace $\rep_\field^\mathrm{fp} \mbR_{\geq0}$ with
$\rep_\field A_5$.  By \cref{cor.Andual} we have
    \[
        \frac{ [\vec A_3, \inj_\field \vec A_5] }{\mbC} \simeq [\vec A_2, \rep_\field \vec A_5],
    \]
    where $\mbC$ is generated by the representations $\sfP_i(I)$.  On the right hand side, one easily finds a one-parameter family of indecomposables. (The one below comes from embedding an infinite family for $\widetilde D_4$.)
    \[
        \begin{tikzpicture}[scale=.75]
            \node (11) at (0,0) {$0$};
            \node (12) at (2,0) {$\field$};
            \node (13) at (4,0) {$\field^2$};
            \node (14) at (6,0) {$\field^2$};
            \node (15) at (8,0) {$\field$};
            \node (21) at (0,-2) {$\field$};
            \node (22) at (2,-2) {$\field^2$};
            \node (23) at (4,-2) {$\field^2$};
            \node (24) at (6,-2) {$\field$};
            \node (25) at (8,-2) {$0$};
            \path[->,font=\scriptsize]
                (11) edge (12)
                (12) edge node [above] {$\left( \begin{smallmatrix} 1 \\ 0 \end{smallmatrix} \right)$} (13)
                (13) edge [-,double distance=.5mm] (14)
                (14) edge node [above] {$\left( \begin{smallmatrix} 1 & 1 \end{smallmatrix} \right)$} (15)
                (11) edge (21)
                (12) edge node [left] {$\left( \begin{smallmatrix} 1 \\ 0 \end{smallmatrix} \right)$} (22)
                (13) edge [-,double distance=.5mm] (23)
                (14) edge node [left] {$\left( \begin{smallmatrix} 1 & 0 \end{smallmatrix} \right)$} (24)
                (15) edge (25)
                (21) edge node [above] {$\left( \begin{smallmatrix} \alpha \\ 1 \end{smallmatrix} \right)$} (22)
                (22) edge [-,double distance=.5mm] (23)
                (23) edge node [above] {$\left( \begin{smallmatrix} 1 & 0 \end{smallmatrix} \right)$} (24)
                (24) edge (25);
        \end{tikzpicture}
    \]
    Abstractly, it follows immediately from the equivalence that also $[\vec A_3, \inj \field \vec A_5]$ has a one-parameter family of indecomposables.

    Concretely, one may follow the (dual) instructions in \cref{const.explicit_An} above and obtain the following $1$-parameter family of indecomposables in $[\vec A_3, \inj \field \vec A_5]$:
    \[
        \begin{tikzpicture}[scale=.75]
            \node (11) at (0,0) {$\field^2$};
            \node (12) at (2,0) {$\field^2$};
            \node (13) at (4,0) {$\field^2$};
            \node (14) at (6,0) {$\field^2$};
            \node (15) at (8,0) {$\field$};
            \node (21) at (0,-2) {$\field^3$};
            \node (22) at (2,-2) {$\field^3$};
            \node (23) at (4,-2) {$\field^2$};
            \node (24) at (6,-2) {$\field$};
            \node (25) at (8,-2) {$0$};
            \node (31) at (0,-4) {$\field^2$};
            \node (32) at (2,-4) {$\field$};
            \node (33) at (4,-4) {$0$};
            \node (34) at (6,-4) {$0$};
            \node (35) at (8,-4) {$0$};
            \path[->,font=\scriptsize]
                (11) edge [-,double distance=.5mm] (12)
                (12) edge [-,double distance=.5mm] (13)
                (13) edge [-,double distance=.5mm] (14)
                (14) edge node [above] {$\left( \begin{smallmatrix} 1 & 1 \end{smallmatrix} \right)$} (15)
                (11) edge node [left] {$\left( \begin{smallmatrix} 1 & 0 \\ 0 & 1 \\ 0 & 0 \end{smallmatrix} \right)$} (21)
                (12) edge node [left] {$\left( \begin{smallmatrix} 1 & 0 \\ 0 & 1 \\ 0 & 0  \end{smallmatrix} \right)$} (22)
                (13) edge [-,double distance=.5mm] (23)
                (14) edge node [right] {$\left( \begin{smallmatrix} 1 & 0 \end{smallmatrix} \right)$} (24)
                (15) edge (25)
                (21) edge node [below] {$\left( \begin{smallmatrix} 1 & 0 & \alpha \\ 0 & 1 & 0 \\ 0 & 0 & 1 \end{smallmatrix} \right)$} (22)
                (22) edge node [above] {$\left( \begin{smallmatrix} 1 & 0 & 0 \\ 0 & 1 & 1 \end{smallmatrix} \right)$} (23)
                (23) edge node [above] {$\left( \begin{smallmatrix} 1 & 0 \end{smallmatrix} \right)$} (24)
                (24) edge (25)
                (21) edge node [left] {$\left( \begin{smallmatrix} 1 & 0 & 0 \\ 0 & 1 & 0 \end{smallmatrix} \right)$} (31)
                (22) edge node [right] {$\left( \begin{smallmatrix} 0 & 1 & 0 \end{smallmatrix} \right)$} (32)
                (23) edge (33)
                (24) edge (34)
                (25) edge (35)
                (31) edge node [above] {$\left( \begin{smallmatrix} 0 & 1 \end{smallmatrix} \right)$} (32)
                (32) edge (33)
                (33) edge (34)
                (34) edge (35);
        \end{tikzpicture}
    \]
\end{ex}

\subsection{Representations in a module category of a finite dimensional algebra}

As a next special case of \cref{thm.QA}, we consider the case that $Q$ is an arbitrary finite acyclic quiver arbitrary while the abelian category $\mcA$ is the category of finite dimensional modules over some finite dimensional $\field$-algebra $\Lambda$. In particular in this case we have $[Q, \mcA] = \mod \Lambda Q$, and we will present the following results in this more familiar language.

Before we can rephrase \cref{thm.QA} in this setup, note that there are natural forgetful functors $\mod \Lambda Q \to \mod \Lambda$ and $\mod \Lambda Q \to \mod \field Q$. The image of a module $M$ under these functors will be denoted by $M_{\Lambda}$ and $M_{\field Q}$, respectively.

With this notation, we obtain the following version of \cref{thm.QA}.

\begin{cor}
    \label{cor.no_inj_summands}
    Let $\Lambda$ be a finite dimensional $\field$-algebra, and $Q$ a finite acyclic quiver.  Then
    \[
        \frac{ \{ M \in \mod \Lambda Q \mid M_{\Lambda} \in \proj \Lambda \} }{\mathbb{T}} \simeq \{ M \in \mod \Lambda Q \mid M_{\field Q} \in \modni \field Q \} ,
    \]
    where
    \[
        \mathbb{T} = \add \{ \sfI_i(P) \mid i \in Q_0 \text{ and } P \in \proj \Lambda \}
    \]
    and $\modni \field Q$ denotes the subcategory of $\mod \field Q$ containing all modules without non-zero injective summands.
\end{cor}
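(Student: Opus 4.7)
The plan is to specialise \cref{thm.QA} to $\mcA = \mod \Lambda$, producing an equivalence
\[
    \frac{[Q, \proj \Lambda]}{\mathbb{T}} \simeq \mathbb{T}^\perp
\]
inside $[Q, \mod \Lambda] = \mod \Lambda Q$, and then to translate both sides into the module-theoretic descriptions appearing in the statement. The left hand side is almost immediate: under the forgetful functor $\mod \Lambda Q \to \mod \Lambda$, a representation $M$ is sent to $M_\Lambda = \bigoplus_{i \in Q_0} M_i$, and since projectivity over $\Lambda$ is closed under finite direct sums and summands, $M_\Lambda \in \proj \Lambda$ holds if and only if each $M_i$ is projective, i.e.\ if and only if $M \in [Q, \proj \Lambda]$. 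The identification $\mathbb{T} = \mcC \cap \mcT$ is already supplied by \cref{cor.ctt_from_tilting}.

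The real work is to recognise $\mathbb{T}^\perp$ as $\{ M \mid M_{\field Q} \in \modni \field Q \}$. First I would invoke the adjunction $\sfI_i \dashv \sfRI_i$ of \cref{lem:injrighadjoint}, which gives
\[
    \Hom_{\Lambda Q}\big( \sfI_i(P), M \big) \cong \Hom_\Lambda\big( P, \sfRI_i(M) \big),
\]
so that $M \in \mathbb{T}^\perp$ is equivalent to $\sfRI_i(M) = 0$ for every $i \in Q_0$; testing with the free module $P = \Lambda$ already detects vanishing. Second, I would observe that the construction of $\sfRI_i$ in the proof of \cref{lem:injrighadjoint} realises $\sfRI_i(M)$ as a kernel built functorially from the $\Lambda$-modules $M_j$ and the structure maps of $M$, and since the forgetful functor $\mod \Lambda \to \mod \field$ is exact it preserves this kernel, giving $\sfRI_i(M) = 0$ if and only if $\sfRI_i^{\field Q}(M_{\field Q}) = 0$.

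Combined, these two reductions transport the condition $M \in \mathbb{T}^\perp$ into a question about the underlying $\field Q$-module, leaving the following verification: for $N \in \mod \field Q$, the vanishing of $\sfRI_i^{\field Q}(N) \cong \Hom_{\field Q}\big( \sfI_i^{\field Q}(\field), N \big)$ for every $i$ is equivalent to $N$ having no nonzero injective summand. One direction is immediate, since any nonzero injective summand of $N$ contains an indecomposable summand $\sfI_j^{\field Q}(\field)$ and hence admits a nonzero map to $N$. For the converse, given a nonzero $f \colon \sfI_i^{\field Q}(\field) \to N$, the image $\im f$ is a nonzero quotient of an injective and therefore itself injective because $\field Q$ is hereditary; an injective submodule of $N$ always splits off as a direct summand, yielding the desired nonzero injective summand of $N$. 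This hereditarity step — ensuring that images of injectives remain injective so that they can be split off — is the only genuinely non-formal ingredient; everything else is bookkeeping with the adjunctions and the exactness of the forgetful functor.
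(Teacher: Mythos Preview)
Your argument is correct, and its overall shape---apply \cref{thm.QA} and then identify $\mathbb{T}^\perp$---matches the paper. Your final step, using hereditarity of $\field Q$ to show that a nonzero map $\sfI_i(\field) \to N$ forces an injective summand in $N$, is exactly what the paper asserts without proof, so you have actually filled in a detail the paper leaves implicit.

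The one genuine difference is in how you pass from $\Hom_{\Lambda Q}\big(\sfI_i(P),M\big)$ to a condition on $M_{\field Q}$. You route this through the right adjoint $\sfRI_i$ of \cref{lem:injrighadjoint} and then argue that $\sfRI_i$ commutes with the exact, faithful forgetful functor $\mod \Lambda \to \mod \field$. The paper instead observes that $\sfI_i(\Lambda) = \Lambda \otimes_\field \sfI_i(\field)$ and uses the adjunction between extension of scalars $\Lambda \otimes_\field -$ and restriction $\mod \Lambda Q \to \mod \field Q$, obtaining $\Hom_{\Lambda Q}\big(\sfI_i(\Lambda),M\big) \cong \Hom_{\field Q}\big(\sfI_i(\field),M_{\field Q}\big)$ in one line. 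Your route is a little longer and relies on the rather indirect construction of $\sfRI_i$ as a kernel; the paper's route is shorter and uses only the standard base-change adjunction. Both are perfectly valid.
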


\begin{proof}
    We apply \cref{thm.QA}. The left hand side of the equivalence is literally the same as in that theorem, so it only remains to simplify the right hand side. Here we have
    \begin{align*}
        \mbT^{\perp} & =  \big\{ M \in \mod \Lambda Q \mid \Hom_{\mod \Lambda Q}\big( \sfI_i(\Lambda), M\big) = 0 \text{ for all } i\in Q_0 \big\} \\
            & = \big\{ M \in \mod \Lambda Q \mid \Hom_{\mod \field Q} \big( \sfI_i(\field), M \big) = 0 \text{ for all } i\in Q_0 \big\} \\
            & = \{ M \in \mod \Lambda Q \mid M_{\field Q} \in \modni \field Q \}.
    \end{align*}
    Here, the first equality follows from the fact that $\sfI(\Lambda) = \Lambda \otimes \sfI(\field)$, together with the fact that tensoring with $\Lambda$ is left adjoint to the forgetful functor $\mod \Lambda Q \to \mod \field Q$. The second equality holds since for $\mod \field Q$, only injective modules permit non-zero homomorphisms from the indecomposable injectives $\sfI_i(\field)$.
\end{proof}

\subsection{Categories of monomorphisms of quiver representations}

In the next application, we combine \cref{cor.An,cor.no_inj_summands} to study monomorphisms of representation of a quiver $Q$, as well as finite compositions thereof. To set the stage, recall that we denote by
$\rep_\field^{*, \m}(Q \otimes A_n)$
the subcategory of
$\rep_\field(Q \otimes A_n)$
where all structure maps in the $A_n$-direction are monomorphisms.

\begin{defn}
    Let $Q$ be a Dynkin quiver. The \emph{costable Auslander algebra} of $\field Q$ is given as
    \[
        \Gamma = \End_{\field Q} \left( \bigoplus_M  M \right),
    \]
    where the sum runs over all isomorphism classes of non-injective indecomposables. (Note that this is indeed a finite sum, since we assumed $Q$ to be Dynkin.)
\end{defn}

\begin{cor}
    \label{cor.auslander_alg}
    Let $Q$ be a Dynkin quiver, and $\Gamma$ its costable Auslander algebra. Then
    \[
        \frac{  \rep_\field^{*, \m}  (Q \otimes \vec A_n) }{\mbX} \simeq \mod \Gamma \vec A_{n-1},
    \]
    where $\mbX$ be the subcategory whose indecomposables are the representations
    \begin{itemize}
        \item $X = \cdots = X \into I = \cdots = I$,
            \par for $X \into I$ an injective envelope of an indecomposable $\field Q$-module $X$,
        \item $0 = \cdots = 0 \to I = \cdots = I$,
            \par where $I$ is an indecomposable injective $\field Q$-module.
    \end{itemize}

    \medskip
    In particular, for $n = 2$ above, we have
    \[
        \frac{  \rep_\field^{*, \m}  (Q \otimes \vec A_2) }{\mbX} \simeq \mod \Gamma.
    \]
\end{cor}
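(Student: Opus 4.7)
The plan is to combine \cref{cor.no_inj_summands} (applied with $\Lambda = \field \vec A_n$ and underlying quiver $Q$) with \cref{cor.An} (applied to $\mcA = \mod \Gamma$), bridging the two via the classical Morita equivalence $\modni \field Q \simeq \proj \Gamma$. First observe that for $M \in \mod (\field \vec A_n) Q = \rep_\field(Q \otimes \vec A_n)$, the condition $M_{\field \vec A_n} \in \proj \field \vec A_n$ is equivalent to each $Q$-vertex slice of $M$ being a monomorphic $\vec A_n$-representation, i.e.\ $M \in \rep_\field^{*, \m}(Q \otimes \vec A_n)$. Thus \cref{cor.no_inj_summands} specialises to
\[
    \frac{\rep_\field^{*, \m}(Q \otimes \vec A_n)}{\mbT_1} \simeq [\vec A_n, \modni \field Q]
\]
via the functor $\tfree$ of the underlying torsion pair, where $\mbT_1 = \add\{\sfI^{Q}_i(P^{\vec A_n}_j)\}$. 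Unpacking the definitions of $\sfI_i^Q$ and $P_j^{\vec A_n}$ shows that $\mbT_1$ is generated exactly by representations of the form $[0 = \cdots = 0 \to I = \cdots = I]$ for $I$ an indecomposable injective $\field Q$-module, which is precisely the second family of indecomposables defining $\mbX$.

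For the bridge, let $T$ denote the direct sum of all indecomposable non-injective $\field Q$-modules; since $Q$ is Dynkin, $T$ is a finite direct sum. Then $\Hom_{\field Q}(T, -)$ is an equivalence of additive categories $\modni \field Q \simeq \proj \Gamma$, which applied pointwise yields $[\vec A_n, \modni \field Q] \simeq [\vec A_n, \proj \Gamma]$. As $\Gamma$ is finite-dimensional, \cref{cor.An} applies with $\mcA = \mod \Gamma$, producing
\[
    \frac{[\vec A_n, \proj \Gamma]}{\mbT_2} \simeq [\vec A_{n-1}, \mod \Gamma] = \mod \Gamma \vec A_{n-1},
\]
with $\mbT_2 = \add\{\sfI^{\vec A_n}_i(P)\}$ generated by representations $[P = \cdots = P \to 0 = \cdots = 0]$. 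Composing the three equivalences yields
\[
    \rep_\field^{*,\m}(Q \otimes \vec A_n)/\mbX \simeq \mod \Gamma \vec A_{n-1},
\]
where $\mbX$ is the full preimage of $\mbT_2$; it remains only to identify this $\mbX$ with the subcategory in the statement.

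For the identification, I compute $\tfree$ on each indecomposable $M = [X = \cdots = X \into I = \cdots = I]$, with $X$ indecomposable, $I = E(X)$ its injective envelope, and the transition from the $X$-block to the $I$-block at some position from $1$ to $n+1$. At each $\vec A_n$-vertex $k$, the maximal injective $\field Q$-summand of $M_k$ equals $M_k$ in the $I$-block and vanishes in the $X$-block when $X$ is non-injective. Thus $\tors M = [0 = \cdots = 0 \to I = \cdots = I]$, so $\tfree(M) = [X = \cdots = X \to 0 = \cdots = 0]$; in the degenerate case $X = I$, the whole $M$ is torsion and $\tfree(M) = 0$. Passing through the Morita equivalence, these non-zero $\tfree$-images cover every generator of $\mbT_2$, and since $\mbT_1 \subseteq \mbX$ accounts for the second family of generators, the abstract $\mbX$ agrees with the one defined combinatorially in the statement. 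The case $n = 2$ is an immediate specialisation since $\vec A_1$ is a point and $\mod \Gamma \vec A_1 \simeq \mod \Gamma$. The main obstacle is this last combinatorial matching: one has to handle the degenerate cases (empty $X$- or $I$-block) carefully and verify that no unexpected additional indecomposables appear in the preimage beyond those in the two declared families.
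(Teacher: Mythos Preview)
Your argument is essentially the paper's own proof: apply \cref{cor.no_inj_summands} with $\Lambda = \field \vec A_n$ and the Dynkin quiver $Q$, bridge via the Morita equivalence $\modni \field Q \simeq \proj \Gamma$, apply \cref{cor.An} with $\mcA = \mod \Gamma$, and trace the generators of $\mbT_2$ back through the two equivalences to identify $\mbX$. One small correction to your final step: the torsion subobject is \emph{not} in general given by taking the maximal injective $\field Q$-summand at each $\vec A_n$-vertex; the clean justification that $\tors M = [0\cdots 0 \to I \cdots I]$ for $M = [X\cdots X \hookrightarrow I \cdots I]$ is simply that this subobject lies in $\mbT_1 \subseteq \mcT$ while the quotient $[X\cdots X \to 0 \cdots 0]$ lies in $\mcF$ (each slice being the non-injective $X$), so this is the torsion decomposition by uniqueness.
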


\begin{proof}
    Since $\field \vec A_n$-modules are projective if and only if all structure maps are monomorphisms, we may reformulate the numerator of the left hand side as
    \[
        \rep_\field^{*, \m}  (Q \otimes \vec A_n) = \{ M \in \mod \field Q \otimes \field \vec A_n \mid M_{\field \vec A_n} \in \proj \field \vec A_n \}.
    \]
    Thus, by \cref{cor.no_inj_summands}, we have
    \[
        \frac{\rep_\field^{*, \m}  (Q \otimes \vec A_n)}{\mbT} \simeq \{ M \in \mod \field Q \otimes \field \vec A_n \mid M_{\field Q} \in \modni \field Q \},
    \]
    where $\mbT$ consists precisely of (sums of) representations of the form $I \otimes P$, where $I$~is injective over $\field Q$ and $P$ is projective over $\field \vec A_n$. For indecomposable $P$ we may depict this representation as
    \[
        0 = \cdots = 0 \to I = \cdots = I.
    \]

    The next step is completely formal: Since $\modni \field Q \simeq \proj \Gamma$, we obtain an equivalence
    \[
        \{ M \in \mod \field Q \otimes \field \vec A_n \mid M_{\field Q} \in \modni \field Q \} \simeq \{ M \in \mod \Gamma \vec A_n \mid M_{\Gamma} \in \proj \Gamma \}.
    \]
    Thus we are in the setup of \cref{cor.An}, and obtain
    \[
        \frac{ \{ M \in \mod \Gamma \vec A_n \mid M_{\Gamma} \in \proj \Gamma \} }{ \mbT' } \simeq \mod \Gamma \vec A_{n-1},
    \]
    where $\mbT'$ consists of direct sums of representations
    \[
        P = \cdots = P \to 0 = \cdots = 0
    \]
    with $P \in \proj \Gamma$.

    Thus we complete the proof by observing that the representations in $\mbT'$ correspond to representations of the form
    \[
        [X = \cdots = X \to 0 = \cdots = 0] \in \rep_\field  (Q \otimes \vec A_n)
    \]
    with $X \in \modni \field Q$ under the second equivalence in this proof, and finally to
    \[
        [ X = \cdots = X \into I = \cdots = I] \in \rep_\field^{*, \m}  (Q \otimes \vec A_n)
    \]
    under the first equivalence above.
\end{proof}

\begin{ex}
    Let $Q = [1 \to 2 \longleftarrow 3]$.
    We are interested in the subcategory $\rep_\field^{*, \m} ( Q \otimes \vec A_n )$ of $\rep_\field ( Q \otimes \vec A_n )$. By the above corollary, up to finitely many indecomposables, $\rep_\field^{*, \m} ( Q \otimes \vec A_n )$ is equivalent to $\rep_\field ( \vec A_{n-1} \otimes Q )$. (Here we use that for this specific quiver $Q$, the costable Auslander algebra is equivalent to $\field Q$ again.)

    In particular, $\rep_\field^{*, \m} ( Q \otimes \vec A_n )$ has finitely many indecomposables if and only if $\field Q \otimes \field \vec A_{n-1}$ is representation finite. This is known to be the case if and only if $n \leq 3$ by \cite[Theorems~2.4 and~2.5]{MR1273693}.
\end{ex}

\begin{thm}
    \label{thm.funfact}
    Let $Q$ be a finite acyclic quiver.
    The category $\rep_\field^{*, \m} ( Q \otimes \vec A_2 )$ has finitely many indecomposables if and only if
    \begin{itemize}
        \item $Q$ is of Dynkin type $A_1$, $A_2$, $A_3$, or $A_4$, or
        \item $Q$ is of Dynkin type $A_5$ and the Loewy length of $\field Q$ is at least $4$.
    \end{itemize}
\end{thm}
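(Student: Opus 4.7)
The plan is to apply \cref{cor.auslander_alg} with $n = 2$, which gives an equivalence
\[
    \frac{\rep_\field^{*, \m}(Q \otimes \vec A_2)}{\mbX} \simeq \mod \Gamma
\]
whenever $Q$ is Dynkin, where $\Gamma$ is the costable Auslander algebra of $\field Q$ and $\mbX$ has only finitely many indecomposables (since $\field Q$ does, $Q$ being Dynkin). Hence for $Q$ Dynkin, representation finiteness of $\rep_\field^{*, \m}(Q \otimes \vec A_2)$ is equivalent to representation finiteness of $\mod \Gamma$. The case $Q$ non-Dynkin is disposed of directly: for each indecomposable $M \in \rep_\field Q$, the representation $[0 \hookrightarrow M]$ is indecomposable in $\rep_\field^{*, \m}(Q \otimes \vec A_2)$, so when $\rep_\field Q$ has infinitely many indecomposables so does $\rep_\field^{*, \m}(Q \otimes \vec A_2)$.

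For Dynkin $Q$, the Gabriel quiver of $\Gamma$ is obtained from the Auslander--Reiten quiver of $\field Q$ by deleting the vertices corresponding to indecomposable injective modules and all incident arrows, with the inherited mesh relations. For type $A_n$ with linear orientation, the subcategory of non-injective indecomposables of $\mod \field A_n$ is, via the relabeling $M_{[i,j]} \mapsto M_{[i-1,j-1]}$, equivalent to the full additive closure of $\ind \field A_{n-1}$, so $\Gamma$ is Morita equivalent to the Auslander algebra $\mathrm{Aus}(\field A_{n-1})$. By the classical classification of representation types of Auslander algebras of linearly oriented type $A$, this is representation finite precisely when $n - 1 \leq 4$, covering the linearly oriented cases $A_1, \ldots, A_5$. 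For the cases $A_1, \ldots, A_4$ with arbitrary orientation, $\Gamma$ remains small and representation finiteness is a direct verification.

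The main obstacle is the $A_5$ dichotomy. I would enumerate the orientations of $A_5$ modulo symmetry and compute the Gabriel quiver of $\Gamma$ case by case. The Loewy length of $\field Q$ controls the length of the longest directed path in $Q$, which in turn governs the size of the largest ``interval-module chain'' in the costable AR quiver. When this Loewy length is at least $4$, the costable AR quiver is narrow enough that $\Gamma$ can be identified with (or tilted to) $\mathrm{Aus}(\field A_4)$, retaining representation finiteness. When the Loewy length is at most $3$, the costable AR quiver becomes wider: one can exhibit a subquiver of $\Gamma$ of extended Dynkin type (typically $\widetilde D_4$), yielding a one-parameter family of indecomposable $\Gamma$-modules. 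For $A_n$ with $n \geq 6$ the same argument extends, since the linearly oriented case already gives the non-representation-finite algebra $\mathrm{Aus}(\field A_5)$, and any other orientation of $A_n$ can be handled by embedding one of the bad configurations; types $D_n$ and $E_n$ are likewise treated by identifying an extended Dynkin subquiver in the costable AR quiver. This boundary computation, handled carefully for each orientation class, is the crux of the classification.
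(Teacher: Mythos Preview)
Your overall strategy matches the paper's: dispose of non-Dynkin $Q$ directly, then for Dynkin $Q$ apply \cref{cor.auslander_alg} with $n=2$ to reduce representation-finiteness of $\rep_\field^{*,\m}(Q\otimes\vec A_2)$ to that of the costable Auslander algebra $\Gamma$. Your observation that for linearly oriented $A_n$ the costable Auslander algebra coincides with the ordinary Auslander algebra of $\field\vec A_{n-1}$ is a pleasant shortcut the paper does not make explicit.

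There is, however, a key ingredient missing from your plan that the paper uses to make the ``only if'' case analysis finite: for a full subquiver $Q'\subseteq Q$, the costable Auslander algebra $\Gamma'$ of $\field Q'$ is an \emph{idempotent subalgebra} of $\Gamma$, so representation-infiniteness propagates upward along full subquiver inclusions. With this reduction one only needs to check the minimal bad cases --- $D_4$ with any orientation (covering every non-$A$ Dynkin), the $A_5$ orientations of Loewy length $\leq 3$, and the remaining $A_6$ orientations not containing such an $A_5$ --- and for each of these the paper exhibits an explicit representation-infinite idempotent subalgebra of $\Gamma$ (Kronecker, $\widetilde D_4$, or a tame concealed $\widetilde E_7$). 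Your sketch gestures at ``embedding one of the bad configurations'' but never states why this forces $\Gamma$ itself to be representation-infinite; without the idempotent-subalgebra fact the argument does not converge.

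For the ``if'' direction, your assertion that for every $A_5$ orientation of Loewy length $\geq 4$ the algebra $\Gamma$ is tilted to $\mathrm{Aus}(\field\vec A_4)$ is unjustified (and would need its own argument). The paper does not attempt anything so structural: it simply lists the three maximal good quivers $1\to 2\leftarrow 3\to 4$, $1\to 2\to 3\to 4\leftarrow 5$, and $1\to 2\to 3\to 4\to 5$, and verifies representation-finiteness for each by computing the Auslander--Reiten quiver directly. So your plan is on the right track, but the subquiver/idempotent reduction is the missing idea, and the positive cases ultimately rest on direct computation rather than a tilting identification.
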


\begin{proof}
    First consider the case that $Q$ is not Dynkin. Then $\modcat \field Q$ already contains infinitely many indecomposables, and we obtain infinitely many indecomposables in $\rep_\field^{*, \m} ( Q \otimes \field \vec A_2 )$ by considering isomorphisms in the $\vec A_2$-direction.

    Thus we may assume that $Q$ is Dynkin, and hence the ``In particular"-part of \cref{cor.auslander_alg} applies. Recall that this asserts that $\rep_\field^{*, \m} ( Q \otimes \vec A_2 ) / \mbX \simeq \modcat \Gamma$, where $\Gamma$ is the costable Auslander algebra of $\field Q$, that is the algebra given by the Auslander--Reiten quiver of $\field Q$ without the vertices corresponding to injective modules, and subject to mesh relations.

    Since the category $\mbX$ only contains finitely many indecomposables, it follows that $\rep_\field^{*, \m} ( Q \otimes \vec A_2 )$ has finitely many indecomposables if and only if $\mod \Gamma$ does, i.e., if $\Gamma$ is representation finite. So we only need to investigate for which Dynkin quivers $Q$ the costable Auslander algebra is representation finite.

    Note that for a full subquiver $Q'$ of $Q$, the corresponding costable Auslander algebra $\Gamma'$ is an idempotent subalgebra of $\Gamma$. In particular, if the costable Auslander algebra of $Q'$ is representation infinite then so is the costable Auslander algebra of $Q$. Therefore, it suffices to prove the ``only if'' direction for the minimal Dynkin quivers not in the list of finite cases given in the theorem.

    Using a double arrow to indicate all possible orientations, these are:
    \begin{itemize}
        \item type $D_4$, with any orientation; this is a full subquiver of any Dynkin quiver not of type $A$.
        \item $1 \longleftrightarrow 2 \to 3 \longleftarrow 4 \longleftrightarrow 5,\quad 1 \longleftarrow 2 \to 3 \to 4 \longleftarrow 5$, and their opposites; these are all orientations of $A_5$ of Loewy length at most $3$.
        \item $1 \to 2 \to 3 \to 4 \to 5 \longleftrightarrow 6$, and its opposite; these are the only orientations of $A_6$ not already covered by the previous point.
    \end{itemize}

    We investigate them one by one, but using the same strategy in all cases. The Auslander--Reiten quivers of $\modcat \field Q$ and $\modni \field Q$ are depicted in \cref{table.funfact_infinite} for all 4 cases.
    Note that the costable Auslander algebra $\Gamma$ is given as the path algebra of the Auslander--Reiten quiver of $\modni \field Q$, modulo mesh relations.
    \begin{table}
        \[ \begin{array}[t]{ccc}
            Q & \text{AR quiver of }\modcat \field Q & \text{AR quiver of }\modni \field Q \\
            \begin{tikzpicture}[scale=.7]
                \node (1) at (0,2) {$1$};
                \node (2) at (0,1) {$2$};
                \node (3) at (0,0) {$3$};
                \node (4) at (1,1) {$4$};
                \path[->,font=\scriptsize]
                    (1) edge [<->] (4)
                    (2) edge [<->] (4)
                    (3) edge [<->] (4);
            \end{tikzpicture}
            &
            \begin{tikzpicture}[scale=.7]
                \node (11) at (0,0) {$\circ$};
                \node (21) at (0,1) {$\circ$};
                \node (31) at (0,2) {$\circ$};
                \node (41) at (1,1) {$\circ$};
                \node (12) at (2,0) {$\circ$};
                \node (22) at (2,1) {$\circ$};
                \node (32) at (2,2) {$\circ$};
                \node (42) at (3,1) {$\circ$};
                \node (13) at (4,0) {$\circ$};
                \node (23) at (4,1) {$\circ$};
                \node (33) at (4,2) {$\circ$};
                \node (43) at (5,1) {$\circ$};
                \node (14) at (6,0) {$\circ$};
                \node (24) at (6,1) {$\circ$};
                \node (34) at (6,2) {$\circ$};
            \path[->,font=\scriptsize]
                (11) edge [dashed] (41)
                (21) edge [dashed] (41)
                (31) edge [dashed] (41)
                (41) edge (12)
                (41) edge (22)
                (41) edge (32)
                (12) edge (42)
                (22) edge (42)
                (32) edge (42)
                (42) edge (13)
                (42) edge (23)
                (42) edge (33)
                (13) edge (43)
                (23) edge (43)
                (33) edge (43)
                (43) edge [dashed] (14)
                (43) edge [dashed] (24)
                (43) edge [dashed] (34);
            \end{tikzpicture}
            &
            \begin{tikzpicture}[scale=.7]
                \node (11) at (0,0) {$\circ$};
                \node (21) at (0,1) {$\circ$};
                \node (31) at (0,2) {$\circ$};
                \node (41) at (1,1) {$\bullet$};
                \node (12) at (2,0) {$\circ$};
                \node (22) at (2,1) {$\circ$};
                \node (32) at (2,2) {$\circ$};
                \node (42) at (3,1) {$\bullet$};
                \node (13) at (4,0) {$\circ$};
                \node (23) at (4,1) {$\circ$};
                \node (33) at (4,2) {$\circ$};
                \path[->,font=\scriptsize]
                    (11) edge [dashed] (41)
                    (21) edge [dashed] (41)
                    (31) edge [dashed] (41)
                    (41) edge (12)
                    (41) edge (22)
                    (41) edge (32)
                    (12) edge (42)
                    (22) edge (42)
                    (32) edge (42)
                    (42) edge [dashed] (13)
                    (42) edge [dashed] (23)
                    (42) edge [dashed] (33);
            \end{tikzpicture}
            \\
            \begin{tikzpicture}[scale=.7]
                \node (1) at (0,4) {$1$};
                \node (2) at (0,3) {$2$};
                \node (3) at (0,2) {$3$};
                \node (4) at (0,1) {$4$};
                \node (5) at (0,0) {$5$};
                \path[->,font=\scriptsize]
                    (1) edge [<->] (2)
                    (2) edge (3)
                    (4) edge (3)
                    (4) edge [<->] (5);
            \end{tikzpicture}
            &
            \begin{tikzpicture}[scale=.7]
                \node (31) at (2,2) {$\circ$};
                \node (21) at (3,3) {$\circ$};
                \node (11) at (2,4) {$\circ$};
                \node (52) at (2,0) {$\circ$};
                \node (42) at (3,1) {$\circ$};
                \node (32) at (4,2) {$\circ$};
                \node (22) at (5,3) {$\circ$};
                \node (12) at (4,4) {$\circ$};
                \node (53) at (4,0) {$\circ$};
                \node (43) at (5,1) {$\circ$};
                \node (33) at (6,2) {$\circ$};
                \node (23) at (7,3) {$\circ$};
                \node (13) at (6,4) {$\circ$};
                \node (54) at (6,0) {$\circ$};
                \node (44) at (7,1) {$\circ$};
                \node (14) at (8,4) {$\circ$};
                \node (55) at (8,0) {$\circ$};
                \path[->,font=\scriptsize]
                    (31) edge (21)
                    (11) edge [dashed] (21)
                    (31) edge (42)
                    (21) edge (32)
                    (21) edge (12)
                    (52) edge [dashed] (42)
                    (42) edge (32)
                    (32) edge (22)
                    (12) edge (22)
                    (42) edge (53)
                    (32) edge (43)
                    (22) edge (33)
                    (22) edge (13)
                    (53) edge (43)
                    (43) edge (33)
                    (33) edge (23)
                    (13) edge (23)
                    (43) edge (54)
                    (33) edge (44)
                    (23) edge [dashed] (14)
                    (54) edge (44)
                    (44) edge [dashed] (55);
            \end{tikzpicture}
            &
            \begin{tikzpicture}[scale=.7]
                \node (31) at (2,2) {$\circ$};
                \node (21) at (3,3) {$\bullet$};
                \node (11) at (2,4) {$\circ$};
                \node (52) at (2,0) {$\circ$};
                \node (42) at (3,1) {$\bullet$};
                \node (32) at (4,2) {$\bullet$};
                \node (22) at (5,3) {$\bullet$};
                \node (12) at (4,4) {$\circ$};
                \node (53) at (4,0) {$\circ$};
                \node (43) at (5,1) {$\bullet$};
                \node (13) at (6,4) {$\circ$};
                \node (54) at (6,0) {$\circ$};
                \path[->,font=\scriptsize]
                    (31) edge (21)
                    (11) edge [dashed] (21)
                    (31) edge (42)
                    (21) edge (32)
                    (21) edge (12)
                    (52) edge [dashed] (42)
                    (42) edge (32)
                    (32) edge (22)
                    (12) edge (22)
                    (42) edge (53)
                    (32) edge (43)
                    (22) edge [dashed] (13)
                    (53) edge (43)
                    (43) edge [dashed] (54);
            \end{tikzpicture}
            \\
            \begin{tikzpicture}[scale=.7]
                \node (1) at (0,4) {$1$};
                \node (2) at (0,3) {$2$};
                \node (3) at (0,2) {$3$};
                \node (4) at (0,1) {$4$};
                \node (5) at (0,0) {$5$};
                \path[->,font=\scriptsize]
                    (2) edge (1)
                    (2) edge (3)
                    (3) edge (4)
                    (5) edge (4);
            \end{tikzpicture}
            &
            \begin{tikzpicture}[scale=.7]
                \node (41) at (1,1) {$\circ$};
                \node (31) at (2,2) {$\circ$};
                \node (21) at (3,3) {$\circ$};
                \node (11) at (2,4) {$\circ$};
                \node (52) at (2,0) {$\circ$};
                \node (42) at (3,1) {$\circ$};
                \node (32) at (4,2) {$\circ$};
                \node (22) at (5,3) {$\circ$};
                \node (12) at (4,4) {$\circ$};
                \node (53) at (4,0) {$\circ$};
                \node (43) at (5,1) {$\circ$};
                \node (33) at (6,2) {$\circ$};
                \node (13) at (6,4) {$\circ$};
                \node (54) at (6,0) {$\circ$};
                \node (44) at (7,1) {$\circ$};
                \path[->,font=\scriptsize]
                    (41) edge (31)
                    (31) edge (21)
                    (11) edge (21)
                    (41) edge (52)
                    (31) edge (42)
                    (21) edge (32)
                    (21) edge (12)
                    (52) edge (42)
                    (42) edge (32)
                    (32) edge (22)
                    (12) edge (22)
                    (42) edge (53)
                    (32) edge (43)
                    (22) edge (33)
                    (22) edge (13)
                    (53) edge (43)
                    (43) edge (33)
                    (43) edge (54)
                    (33) edge (44)
                    (54) edge (44);
            \end{tikzpicture}
            &
            \begin{tikzpicture}[scale=.7]
                \node (41) at (1,1) {$\circ$};
                \node (31) at (2,2) {$\bullet$};
                \node (21) at (3,3) {$\bullet$};
                \node (11) at (2,4) {$\bullet$};
                \node (52) at (2,0) {$\bullet$};
                \node (42) at (3,1) {$\bullet$};
                \node (32) at (4,2) {$\bullet$};
                \node (12) at (4,4) {$\bullet$};
                \node (53) at (4,0) {$\bullet$};
                \node (43) at (5,1) {$\circ$};
                \path[->,font=\scriptsize]
                    (41) edge (31)
                    (31) edge (21)
                    (11) edge (21)
                    (41) edge (52)
                    (31) edge (42)
                    (21) edge (32)
                    (21) edge (12)
                    (52) edge (42)
                    (42) edge (32)
                    (42) edge (53)
                    (32) edge (43)
                    (53) edge (43);
            \end{tikzpicture}
            \\
            \scalebox{.7}{
                \begin{tikzpicture}[scale=.7]
                    \node (1) at (0,4) {$1$};
                    \node (2) at (0,3) {$2$};
                    \node (3) at (0,2) {$3$};
                    \node (4) at (0,1) {$4$};
                    \node (5) at (0,0) {$5$};
                    \node (6) at (0,-1) {$6$};
                    \path[->,font=\scriptsize]
                        (1) edge (2)
                        (2) edge (3)
                        (3) edge (4)
                        (4) edge (5)
                        (5) edge [<->] (6);
                \end{tikzpicture}
            }
            &
            \scalebox{.7}{
                \begin{tikzpicture}[scale=.7]
                    \node (61) at (-1,-1) {$\circ$};
                    \node (51) at (0,0) {$\circ$};
                    \node (41) at (1,1) {$\circ$};
                    \node (31) at (2,2) {$\circ$};
                    \node (21) at (3,3) {$\circ$};
                    \node (11) at (4,4) {$\circ$};
                    \node (62) at (1,-1) {$\circ$};
                    \node (52) at (2,0) {$\circ$};
                    \node (42) at (3,1) {$\circ$};
                    \node (32) at (4,2) {$\circ$};
                    \node (22) at (5,3) {$\circ$};
                    \node (12) at (6,4) {$\circ$};
                    \node (63) at (3,-1) {$\circ$};
                    \node (53) at (4,0) {$\circ$};
                    \node (43) at (5,1) {$\circ$};
                    \node (33) at (6,2) {$\circ$};
                    \node (54) at (6,0) {$\circ$};
                    \node (64) at (5,-1) {$\circ$};
                    \node (44) at (7,1) {$\circ$};
                    \node (65) at (7,-1) {$\circ$};
                    \node (55) at (8,0) {$\circ$};
                    \node (66) at (9,-1) {$\circ$};
                    \path[->,font=\scriptsize]
                        (61) edge [dashed] (51)
                        (51) edge (41)
                        (41) edge (31)
                        (31) edge (21)
                        (21) edge (11)
                        (51) edge (62)
                        (41) edge (52)
                        (31) edge (42)
                        (21) edge (32)
                        (11) edge (22)
                        (62) edge (52)
                        (52) edge (42)
                        (42) edge (32)
                        (32) edge (22)
                        (22) edge [dashed] (12)
                        (52) edge (63)
                        (42) edge (53)
                        (32) edge (43)
                        (22) edge (33)
                        (63) edge (53)
                        (53) edge (43)
                        (43) edge (33)
                        (53) edge (64)
                        (43) edge (54)
                        (33) edge (44)
                        (64) edge (54)
                        (54) edge (44)
                        (54) edge (65)
                        (44) edge (55)
                        (65) edge (55)
                        (55) edge (66);
                \end{tikzpicture}
            }
            &
            \scalebox{.7}{
                \begin{tikzpicture}[scale=.7]
                    \node (61) at (-1,-1) {$\circ$};
                    \node (51) at (0,0) {$\circ$};
                    \node (41) at (1,1) {$\circ$};
                    \node (31) at (2,2) {$\bullet$};
                    \node (21) at (3,3) {$\circ$};
                    \node (11) at (4,4) {$\circ$};
                    \node (62) at (1,-1) {$\circ$};
                    \node (52) at (2,0) {$\bullet$};
                    \node (42) at (3,1) {$\bullet$};
                    \node (32) at (4,2) {$\bullet$};
                    \node (63) at (3,-1) {$\circ$};
                    \node (53) at (4,0) {$\bullet$};
                    \node (43) at (5,1) {$\circ$};
                    \node (54) at (6,0) {$\circ$};
                    \node (64) at (5,-1) {$\circ$};
                    \node (65) at (7,-1) {$\circ$};
                    \path[->,font=\scriptsize]
                        (61) edge [dashed] (51)
                        (51) edge (41)
                        (41) edge (31)
                        (31) edge (21)
                        (21) edge [dashed] (11)
                        (51) edge (62)
                        (41) edge (52)
                        (31) edge (42)
                        (21) edge (32)
                        (62) edge (52)
                        (52) edge (42)
                        (42) edge (32)
                        (52) edge (63)
                        (42) edge (53)
                        (32) edge (43)
                        (63) edge (53)
                        (53) edge (43)
                        (53) edge (64)
                        (43) edge (54)
                        (64) edge (54)
                        (54) edge (65);
                \end{tikzpicture}
            }
        \end{array} \]
        \caption{The minimal cases of Dynkin quivers $Q$ with representation infinite costable Auslander algebra, as in the proof of \cref{thm.funfact}. (Dashed parts might or might not be present, depending on the precise choice of orientation of the arrows.)
        }
        \label{table.funfact_infinite}
    \end{table}
    In all cases, we marked a subset of the vertices in the costable Auslander--Reiten quiver by solid dots. The idempotent subalgebras of the costable Auslander algebras given by this subset of vertices are, in the four cases:
    \begin{itemize}
        \item The Kronecker algebra
            $
                \begin{tikzpicture}[scale=.75]
                    \node (0) at (0,0.2) { $\circ$ };
                    \node (1) at (1,0.2) { $\circ$ };
                    \path[->,font=\scriptsize]
                        (0) edge [bend left] (1)
                        (0) edge [bend right] (1);
                \end{tikzpicture}
            $.
        \item The algebra of type $\widetilde{D}_4$
            $
                \begin{tikzpicture}[scale=.75,baseline=0]
                    \node (0) at (0,0.7) { $\circ$ };
                    \node (1) at (0,-0.3) { $\circ$ };
                    \node (2) at (1,0.2) { $\circ$ };
                    \node (3) at (2,0.7) { $\circ$ };
                    \node (4) at (2,-0.3) { $\circ$ };
                    \path[->,font=\scriptsize]
                        (0) edge (2)
                        (1) edge (2)
                        (2) edge (3)
                        (2) edge (4);
                \end{tikzpicture}
            $.
        \item The algebra given by the quiver with relations as on the left below (zero and commutativity relations indicated by dotted lines). This algebra has the ``frame'' as depicted on the right  in the terminology of Happel and Vossieck's \cite{MR701205}. In particular it appears in their list of tame concealed algebras of type $\widetilde{E}_7$.
            \[
                \begin{tikzpicture}[scale=.75,baseline=0]
                    \node (0) at (0,0.7) { $\circ$ };
                    \node (1) at (0,-0.3) { $\circ$ };
                    \node (2) at (1,0.2) { $\circ$ };
                    \node (3) at (2,0.7) { $\circ$ };
                    \node (4) at (2,-0.3) { $\circ$ };
                    \node (5) at (3,0.2) { $\circ$ };
                    \node (6) at (4,0.7) { $\circ$ };
                    \node (7) at (4,-0.3) { $\circ$ };
                    \path[->,font=\scriptsize]
                        (0) edge (2)
                        (2) edge (1)
                        (3) edge (2)
                        (2) edge (4)
                        (3) edge (5)
                        (5) edge (4)
                        (6) edge (5)
                        (5) edge (7)
                        (0) edge [-, dotted,thick] (1)
                        (3) edge [-, dotted,thick] (4)
                        (6) edge [-, dotted,thick] (7);
                \end{tikzpicture}
                \text{ has frame }
                \begin{tikzpicture}[scale=.75,baseline=0]
                    \node (0) at (-0.5,0.2) { $\circ$ };
                    \node (1) at (0.25,0.2) { $\circ$ };
                    \node (2) at (1,0.2) { $\circ$ };
                    \node (3) at (2,0.7) { $\circ$ };
                    \node (4) at (2,-0.3) { $\circ$ };
                    \node (5) at (3,0.2) { $\circ$ };
                    \node (6) at (3.75,0.2) { $\circ$ };
                    \node (7) at (4.5,0.2) { $\circ$ };
                    \path[->,font=\scriptsize]
                        (0) edge [-] (1)
                        (2) edge [-] (1)
                        (3) edge (2)
                        (2) edge (4)
                        (3) edge (5)
                        (5) edge (4)
                        (6) edge [-] (5)
                        (6) edge [-] (7)
                        (3) edge [-, dotted,thick] (4);
                \end{tikzpicture}
            \]
        \item The algebra of type $\widetilde{D}_4$, as above.
    \end{itemize}
    In particular these algebras are all representation infinite, and hence so are the costable Auslander algebras from \cref{table.funfact_infinite}.

    For the ``if'' part of the statement, it suffices to show that all maximal quivers in the list actually give rise to finite categories $\rep_\field^{*, \m} ( Q \otimes \vec A_2 )$. These are $1 \to 2 \longleftarrow 3 \to 4$, $1 \to 2 \to 3 \to 4 \longleftarrow 5$, and $1 \to 2 \to 3 \to 4 \to 5$. The finiteness of  $\rep_\field^{*, \m} ( Q \otimes \vec A_2 )$ for these three cases can be checked by calculating explicitly the Auslander--Reiten quivers.
\end{proof}

\subsection{Rectangular grid representations with monomorphic structure maps}

Our final explicit quiver application concerns the case that we have monomorphisms in all directions. It differs slightly from all the above, in that it does not build on \cref{thm.QA}, but rather on the underlying \cref{cor:equiv_ctt}.

\begin{cor}
    \label{cor.mono_mono}
    In the subcategory $\rep_\field^{\m, \m} ( \vec A_m \otimes \vec A_n )$ of $\rep_\field ( \vec A_m \otimes \vec A_n )$, let $\mbT$ the subcategory formed by finite direct sums of modules of the form
    \[
        T_{i,j} = \field_{\{(x, y) \mid x > i \text{ or } y > j\}} .
    \]
    Then
    \[
        \frac{\rep_\field^{\m, \m} ( \vec A_m \otimes \vec A_n ) }{\mbT} \simeq \rep_\field ( \vec A_{m-1} \otimes  \vec A_{n-1} ) .
    \]
    In particular, $\rep_\field^{\m, \m} ( \vec A_m \otimes  \vec A_n )$ contains finitely many indecomposables precisely in the cases
    \begin{itemize}
        \item $m \leq 2$ or $n \leq 2$,
        \item $(m,n) \in \{ (3,3), (3,4), (3,5), (4,3), (5,3)\}$.
    \end{itemize}
    It is of tame representation type precisely in the cases
    \begin{itemize}
        \item $(m,n) \in \{ (3,6), (4,4), (6,3)\}$.
    \end{itemize}
    In all other cases it is of wild representation type.
\end{cor}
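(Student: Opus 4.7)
The approach is to apply \cref{cor:equiv_ctt} to $\mbT$ regarded as a tilting subcategory of $\mcA := \rep_\field(\vec A_m \otimes \vec A_n)$. Parametrize $\mbT = \add\{T_{i, j} : 0 \leq i \leq m,\; 0 \leq j \leq n\}$ with the conventions $T_{0, j} = T_{i, 0} = \sfP_{(1, 1)}$, $T_{i, n} = \sfP_{(i+1, 1)}$, $T_{m, j} = \sfP_{(1, j+1)}$, and $T_{m, n} = 0$; in particular $\mbT$ contains every indecomposable projective in the first row or first column of the grid.

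First I would verify that $\mbT$ is tilting. Each $T_{i, j}$ admits the projective resolution
\[
0 \to \sfP_{(i+1, j+1)} \to \sfP_{(i+1, 1)} \oplus \sfP_{(1, j+1)} \to T_{i, j} \to 0,
\]
yielding $\pdim T_{i, j} \leq 1$, and a direct $\Hom$-computation gives $\Ext^1(T_{i, j}, T_{i', j'}) = 0$. Each indecomposable projective $\sfP_{(i, j)}$ fits into
\[
0 \to \sfP_{(i, j)} \to T_{i-1, n} \oplus T_{m, j-1} \to T_{i-1, j-1} \to 0,
\]
verifying condition (3) of \cref{def.tilting} via \cref{rem.cores_on_sums_enough}, and contravariant finiteness is automatic from \cref{prop.noetherian_tilting}. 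Dually, the injective coresolution
\[
0 \to T_{i, j} \to \sfI_{(m, n)} \to \sfI_{(i, j)} \to 0,
\]
combined with $\sfI_{(m, n)} = \sfP_{(1, 1)} \in \mbT$, shows that $\mbT$ is also weakly cotilting.

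Next I would identify the two sides of the equivalence in \cref{cor:equiv_ctt}. Because $\mbT$ contains all boundary indecomposable projectives, the Yoneda isomorphism forces any $X \in \mbT^\perp$ to vanish on the first row and first column, and the presentation above shows this suffices; hence $\mbT^\perp \simeq \rep_\field(\vec A_{m-1} \otimes \vec A_{n-1})$ via restriction to the subgrid $\{2, \ldots, m\} \times \{2, \ldots, n\}$. For the cotorsion part, observe that $X$ has all structure maps monomorphic iff each composed map $X(a, b) \to X(m, n)$ is a monomorphism iff $X$ embeds in $\sfP_{(1, 1)}^{\dim X(m, n)}$; as every $T_{i, j}$ itself lies in $\rep_\field^{\m, \m}$ (its support being upward closed), this yields $\rep_\field^{\m, \m}(\vec A_m \otimes \vec A_n) = \Sub \mbT$, and \cref{prop:dualtorsionpair} then gives $\Sub \mbT = {}^{\perp_1}\mbT$. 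Finally, as $\mcA$ has global dimension at most $2$, every object of $\Sub \mbT$ has $\pdim \leq 1$: an embedding $X \hookrightarrow T \in \mbT$ produces the exact sequence $\Ext^2(T, -) \to \Ext^2(X, -) \to \Ext^3(T/X, -)$ with vanishing outer terms. Hence $\{X \in {}^{\perp_1}\mbT : \pdim X \leq 1\} = \rep_\field^{\m, \m}(\vec A_m \otimes \vec A_n)$, and \cref{cor:equiv_ctt} delivers the claimed equivalence.

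The representation-type classification is then immediate, since $\mbT$ has only finitely many indecomposables: $\rep_\field^{\m, \m}(\vec A_m \otimes \vec A_n)$ shares the representation type of $\rep_\field(\vec A_{m-1} \otimes \vec A_{n-1})$, and the stated lists follow from \cref{thm.types_for_grids} by shifting indices. The main obstacle is identifying the cotorsion part with $\rep_\field^{\m, \m}$; the decisive step, which avoids a delicate direct computation of $\Ext^1(X, T_{i, j})$, is to recognize that $\mbT$ is \emph{also} weakly cotilting, after which \cref{prop:dualtorsionpair} supplies the identification $\rep_\field^{\m, \m} = \Sub \mbT = {}^{\perp_1}\mbT$ and the projective dimension bound drops out of the global dimension of $\mcA$.
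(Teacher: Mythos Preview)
Your proof is correct and follows the same overall strategy as the paper: verify that $\mbT$ is tilting, apply \cref{cor:equiv_ctt}, and identify the two sides. The only substantive difference is in how you identify ${}^{\perp_1}\mbT$ with $\rep_\field^{\m,\m}(\vec A_m \otimes \vec A_n)$. You invoke the dual framework, observing that $\mbT$ is also weakly cotilting and then using \cref{prop:dualtorsionpair} to obtain $\Sub\mbT = {}^{\perp_1}\mbT$. The paper instead reads $\Ext^1(M, T_{i,j})$ directly off the very same injective resolution you wrote down: applying $\Hom_\mcA(M,-)$ to $T_{i,j} \into I_{m,n} \onto I_{i,j}$ gives $\Ext^1_\mcA(M, T_{i,j}) \cong D\ker[M_{i,j} \to M_{m,n}]$, from which ${}^{\perp_1}\mbT = \rep_\field^{\m,\m}$ follows in one line. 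This also makes your separate ``direct $\Hom$-computation'' of $\Ext^1(\mbT,\mbT) = 0$ unnecessary, since $\mbT \subseteq \rep_\field^{\m,\m} = {}^{\perp_1}\mbT$ is then automatic. So the step you call delicate is in fact shorter than the detour through cotilting --- though your route is perfectly valid and has the pleasant feature of exhibiting $\mbT$ as simultaneously tilting and cotilting.
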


\begin{figure}
    \centering
    \begin{tikzpicture}
        \node at (-.3, 2) {1};
        \node at (-.3, 1) {$j$};
        \node at (-.3, 0) {$n$};
        \node at (0, -.3) {1};
        \node at (2, -.3) {$i$};
        \node at (3,-.3) {$m$};
        \draw (0,0) rectangle (3,2);
        \draw [fill=lightergray] (0,0) -- (0,1) -- (2,1) -- (2,2) -- (3,2) -- (3,0) -- cycle;
    \end{tikzpicture}
    \caption{Diagramatic depiction of the module $T_{i,j} = \field_{\{(x, y) \mid x > i \text{ or } y > j\}}$.}
\end{figure}

\begin{proof}
    The category $\mcA = \rep_\field ( \vec A_m \otimes \vec A_n )$ has projective and injective modules
    \begin{align*}
        P_{i,j} &= \field_{\{i,\ldots,m\}\times\{j,\ldots,n\}} \\
        I_{i,j} &= \field_{\{1,\ldots,i\}\times\{1,\ldots,j\}}.
    \end{align*}
    Thus $T_{i,j}$ has projective resolution
    \[
        P_{i+1,j+1} \into \underbrace{P_{i+1,1} \oplus P_{1,j+1}}_{T_{i,n} \oplus T_{n,j}} \onto T_{i,j},
    \]
    so $\mathbb T$ satisfies the final two properties of being weakly tilting, \cref{def.tilting}.

    Moreover, $T_{i,j}$ has injective resolution
    \[
        T_{i,j} \into I_{m,n} \onto I_{i,j}.
    \]
    If follows that
    \begin{align*}
        \Ext^1_\mcA( M, T_{i,j} ) & = \coker [ \Hom_\mcA(M, I_{m,n}) \to \Hom_\mcA(M, I_{i,j}) ] \\
            & = D \ker [M_{i,j} \to M_{m,n}],
    \end{align*}
    where $D$ denotes the vector space dual.
    Thus ${}^{\perp_1} \mbT = \rep_\field^{\m, \m} ( \vec A_m \otimes \vec A_n )$.  In particular $\mathbb T \subseteq \rep_\field^{\m, \m} ( \vec A_m \otimes \vec A_n )$, so $\Ext^1_\mcA(\mathbb T, \mathbb T) = 0$.  It thus follows that $\mathbb T$ is weakly tilting, and so by \cref{prop.noetherian_tilting}, tilting.

    Applying \cref{cor:equiv_ctt}, we get
    \[
        \frac{ \{ M \in {}^{\perp_1} \mbT \mid \pdim M \leq 1 \} }{\mbT} \simeq \mbT^{\perp}.
    \]

    Since any module in $\rep_\field^{\m, \m} ( \vec A_m \otimes \vec A_n )$ is a submodule of (copies of) the projective module $P_{1,1}$,
    and the global dimension of $\field \vec A_m \otimes \field \vec A_n$ is $2$, these modules automatically have projective dimension at most $1$. Thus we have shown that
    \[
        \{ M \in {}^{\perp_1} \mbT \mid \pdim M \leq 1 \} = \rep_\field^{\m, \m} ( \vec A_m \otimes \vec A_n ).
    \]
    Now we explicitly describe $\mbT^{\perp}$. Since $P_{i,1} = T_{i-1,n}$ and $P_{1,j} = T_{m,j-1}$, all modules in $\mbT^{\perp}$ vanish in the vertices of the forms $(i,1)$ and $(1,j)$. On the other hand, all modules in $\mbT$ are generated in these vertices, so there cannot be any nonzero maps from $\mbT$ to modules vanishing in these vertices. In other words, we have shown that
    \[
        \mbT^{\perp} = \{ M \in \rep_\field ( \vec A_m \otimes \vec A_n ) \mid \forall i \colon M_{i,1} = 0 \text{ and } \forall j \colon M_{1,j} = 0 \}.
    \]
    Clearly this latter subcategory may be identified with $\rep_\field ( \vec A_{m-1} \otimes \vec A_{n-1} )$.  Thus we have established the equivalence of the categories.

    In particular $\rep_\field^{\m, \m} ( \vec A_m \otimes \vec A_n )$ and $\rep_\field ( \vec A_{m-1} \otimes \vec A_{n-1} )$ have the same representation type.  An application of \cref{thm.types_for_grids} finishes the proof.
\end{proof}

\section*{Acknowledgements}
UB and MB have been supported by the DFG Collaborative Research Center TRR109 \emph{Discretization in Geometry and Dynamics}.  SO has been supported by Norwegian Research Council project 250056, ``Representation theory via subcategories''.  JS has been partially supported by Norwegian Research Council project 231000, ``Clusters, combinatorics and computations in algebra''.
We thank the anonymous referee for carefully reading the paper and providing valuable comments.

\bibliography{cotorsion-torsion}
\bibliographystyle{amsplain}

\end{document}